\newtheorem{thm}{Theorem}[section]
\newtheorem{cor}[thm]{Corollary}
\newtheorem{lem}[thm]{Lemma}
\newtheorem{prop}[thm]{Proposition}
\theoremstyle{definition}
\newtheorem{defn}[thm]{Definition}
\newtheorem{example}[thm]{Example}
\theoremstyle{remark}
\newtheorem{rem}[thm]{Remark}
\numberwithin{equation}{section}
\begin{document}
\title[$({\mathrm R},{\mathcal B})$-Multidimensional-almost automorphic functions with applications]{$({\mathrm R},{\mathcal B})$-Multidimensional-almost automorphic functions with applications to integral and partial differential equations}

\author{A. Ch\'avez}
\address{Departamento de
Matem\'aticas, Facultad de Ciencias F\' isicas Y Matem\'aticas, Universidad Nacional de Trujillo, Trujillo, Per\'u}
\email{ajchavez@unitru.edu.pe}

\author{K. Khalil}
\address{Faculty of Sciences Semlalia, Cadi Ayyad University, B.P. 2390, 40000 Marrakesh, Morocco}
\email{kamal.khalil.00@gmail.com}

\author{M. Kosti\' c}
\address{Faculty of Technical Sciences,
University of Novi Sad,
Trg D. Obradovi\' ca 6, 21125 Novi Sad, Serbia}
\email{marco.s@verat.net}

\author{M. Pinto}
\address{Departamento de
Matem\'aticas, Facultad de Ciencias, Universidad de Chile, Santiago de Chile, Chile}
\email{pintoj.uchile@gmail.com}

%\author{U. Zaveleta}
%\address{Departamento de
%Matem\'aticas, Facultad de Ciencias F\' isicas Y Matem\'aticas, Universidad Nacional de Trujillo, Trujillo, Chile}
%\email{aulicesz@gmail.com}

{\renewcommand{\thefootnote}{} \footnote{2010 {\it Mathematics
Subject Classification.} 42A75, 43A60, 47D99.
\\ \text{  }  \ \    {\it Key words and phrases.} $({\mathrm R},{\mathcal B})$-Multidimensional-almost automorphic functions, asymptotically $({\mathrm R},{\mathcal B})$-multi-almost automorphic functions,
abstract Volterra integrodifferential equations.
\\  \text{  }  
Marko Kosti\' c is partially supported by grant 451-03-68/2020/14/200156 of Ministry
of Science and Technological Development, Republic of Serbia.
Manuel Pinto is partially supported by Fondecyt 1170466.}}

\begin{abstract}
In this paper, we analyze the classes of $({\mathrm R},{\mathcal B})$-multi-almost automorphic functions and asymptotically $({\mathrm R},{\mathcal B})$-multi-almost automorphic functions.
We provide plenty valuable applications to the abstract Volterra integrodifferential equations in Banach spaces and partial differential equations.
\end{abstract}
\maketitle
\section{Introduction and preliminaries}
S. Bochner introduced the concept of almost automorphy in the literature (\cite{Boch1,Boch2}) as a generalization of the almost periodicity. He discovered the class of almost automorphic functions when he was studying problems related to differential geometry (\cite{Boch3}), and also he used the almost automorphy of functions to prove some results about almost periodic functions. Afterward, Veech \cite{veech,veech-prim} and many other researchers deeply investigated this concept. In connection with the study of the asymptotic behavior of differential equations, the concept of almost automorphy has been intensively appeared. In fact, it is well known that some almost periodic systems do not carry necessarily almost periodic dynamics (\cite{Johnson,Ortega,Shen-Yi}), whereas, these systems may have bounded oscillating solutions, these oscillations belong to a boarder class than the class of almost periodic functions, namely almost automorphic functions.
%Once almost periodic functions where introduced 1n XXXX by HBR, its study in its theoretical site and also in the applications has been developed by different scientists [XXX]. A natural generalization of the class of almost periodic functions is provided by the almost automorphic ones, this class of functions where discovered by S. Bochner in when he was studying problems related to differential geometry [SSS], Bochner also has developed the Basic theory [], subsequently several mathematicians has spend their times to study systematically almost automorphic functions and its own dynamics. Since almost periodic and almost automorphic functions are crucial in the study of natural phenomena, the idea of study almost periodic type properties of differential equations, difference equations, integral equations, etc has called the attention of the physical and mathematical community [], one interesting result in this direction was the work of R. Johnson who proved the existence of a linear almost periodic systems with no almost periodic solution, but with almost automorphic solution []. 

It is well known that, (usually mild) solutions to nonautonomous evolution differential equations satisfy a concrete integral equation in which the integral kernels are expressed using two parameter evolution families $\{U(t,s)\}_{t\geq s \geq 0}$, see \cite{Pazy}. Thanks to the bi-almost automorhy of the evolution operator $\{U(t,s)\}_{t\geq s \geq 0}$ we may prove the existence of almost automorphic solutions, see \cite{chavez1,chavez2,chavez3,chenlin,43-xiao} for more details. 

For $X$ a Banach space, roughly speaking, a continuous function $f:\mathbb{R}\times \mathbb{R}\to X$  is bi-almost automorphic if it behaves like an almost automorphic function, but in which the sequences $\{s_n\} \in \mathbb{R}^2$ (translations) are considered in the set $\mathrm{R}=\{(w,w)\, :\, w \in \mathbb{R}\}$ and not in the full domain $\mathbb{R}^2$.

The notion of a (positively) bi-almost automorphic function was introduced by T. J. Xiao et al. in \cite[(2009)]{43-xiao}. Three years later, Z. Chen and W. Lin employed this notion in their investigation of nonautonomous stochastic evolution equations \cite{chenlin}, see also \cite{chavez1}-\cite{chavez2} and \cite[Appendix A.3]{diagana}, where the authors analyzed the notion of bi-almost automorphic sequences. Besides the above-mentioned papers, we would like to mention a recent research study \cite{chavez3} by A. Ch\'avez, M. Pinto and U. Zavaleta, where the authors systematically analyzed the notion of bi-almost automorphy in the study of abstract nonlinear integral equations that are simultaneously of
advanced and delayed type, as well as the research studies \cite{chang-zheng} by Y.-K. Chang, S. Zheng, \cite{hujin} by Z. Hu, Z. Jin, \cite{zxia} by Z. Xia and \cite{zxiawang} by Z. Xia, D. Wang.

%\textcolor{blue}{Would you be so kind as to write something about the results obtained so far in the above-mentioned papers, including some classes of %equations considered? Thank you very much in adavance.}

Now we would like to mention some works in which bi-almost automorphic functions had appeared
% naturally in the study of abstract nonautonomous equations in the continuous or discrete  settings. For example, the notion was important in the following works (among others):
\begin{itemize}
\item In \cite{43-xiao}, the authors introduced the notion of a continuous bi-almost automorphic function and obtained conditions for studying pseudo almost automorphic mild  solutions of the following equations in $\mathbb{R}$ :
\begin{eqnarray*}
x'(t)&=&A(t)x(t)+f(t,x(t))\\
x'(t)&=&A(t)x(t)+f(t,x(t-h))\\
x'(t)&=&A(t)x(t)+f(t,x\left( \alpha(t, x(t))\right)\, .
\end{eqnarray*}
\item In \cite{chenlin}, the authors introduced the notion of square-mean bi-almost automorphic functions for a stochastic processes and analyze the existence of square-mean almost automorphic solutions of the following non-autonomous linear stochastic evolution equation :
\begin{eqnarray*}
dx(t)=A(t)x(t)dt+f(t)dt+\gamma(t)dW(t),
\end{eqnarray*}
for $f,\gamma$ stochastic process and $W$ a two-sided standard one-dimensional Brownian motion.

\item The notion of a discrete bi-almost automorphic function can be found in \cite{chavez1,chavez2}, where the authors have been used it in the study of the non-autonomous difference equation that appears in the study of the following hybrid system of a differential equation in finite dimensional Banach spaces:
\begin{eqnarray*}
x'(t)=A(t)x(t)+B(t)x([t])+f(t,x(t),x([t])).
\end{eqnarray*}

\item We also mention that, in \cite{chavez3} the authors have used the notion of bi-almost automorphy and the one of $\lambda$-boundedness in order to study the following nonlinear abstract integral equations of advanced and delayed type:
\begin{eqnarray*}
y(t)&=&f(t,y(t),y(a_0(t)))+\int_{-\infty}^{t}C_1(t,s,y(s),y(a_1(s)))ds  \\ &&+ \int_{t}^{+\infty}C_2(t,s,y(s),y(a_2(s)))ds\; .
\end{eqnarray*}
\end{itemize}

Observing the previous works (and references cited therein), we emphasize that the notion of bi-almost automorphic function is crucial in the study of the almost automorphic dynamics in differential, integrodifferential and difference equations.

One other important notion that brings light to our present work is that of $\mathbb{Z}$-almost automorphic functions (see \cite{chavez1,chavez2}). This notion, roughly speaking, says that a function (not necessarily continuous) $f:\mathbb{R}\to X$  (defined on $\mathbb{R}$) is $\mathbb{Z}$-almost automorphic if it behaves like an almost automorphic function but only with integer sequences (a subset of $\mathbb{R}$) translations in the variable $t\in \mathbb{R}$, but not with sequences of the full domain $\mathbb{R}$ (see Example \ref{exam001} below).
 
The main objectives in this work are twofold: the first one is to develop the basic theory of $({\mathrm R},{\mathcal B})$-multi-almost automorphic functions, and the second one is provide concrete applications; in fact we apply the results to multi-dimensional integral equations of Volterra type, integrodifferential equations and partial differential equations such as the classical heat equation and Poisson equation. In the last application (of Poisson equation) we introduce a new generalization of Sibuya's result for almost periodic functions \cite{YSibuya} using the uniform continuity of the solution, our result is new even for the classical almost automorphic functions.\\

The organization and main ideas of this paper can be briefly described as follows. 
The main purpose of paper is to investigate $({\mathrm R},{\mathcal B})$-multi-almost automorphic functions and asymptotically $({\mathrm R},{\mathcal B})$-multi-almost automorphic functions.
We will not consider here the notion of a positively $({\mathrm R},{\mathcal B})$-multi-almost periodicity and their generalizations (\cite{43-xiao}).

We assume heneceforth
that $(X,\| \cdot \|)$, $(Y, \|\cdot\|_Y)$ and $(Z, \|\cdot\|_Z)$ are complex Banach space. By
$L(X,Y)$ we denote the Banach algebra of all bounded linear operators from $X$ into
$Y$ with $L(X,X)$ being denoted $L(X)$. If $A: D(A) \subseteq X \mapsto X$ is a closed linear operator,
then its nullspace (or kernel) and range will be denoted respectively by
$N(A)$ and $R(A)$. By $[D(A)]$ we denote the Banach space 
$(D(A), \|\cdot\|_{[D(A)]}),$ where $\|\cdot\|_{[D(A)]}$ is the graph norm defined by $\|x\|_{[D(A)]}:=\|x\|+\|Ax\|$ for all $x\in D(A).$

For given real numbers $s\in {\mathbb R}$ and $\theta \in (0,\pi]$, we define $\lceil s \rceil:=\inf \{
l\in {\mathbb Z} : s\leq l \}$ and $\Sigma_{\theta}:=\{ z\in {\mathbb C} \setminus \{0\} :
|\arg (z)|<\theta \}.$ The Euler Gamma function is denoted by
$\Gamma(\cdot)$. We also set $g_{\zeta}(t):=t^{\zeta-1}/\Gamma(\zeta),$ $\zeta>0.$
The convolution operator $\ast$ is defined by $f\ast g(t):=\int_{0}^{t}f(t-s)g(s)\,
ds.$ 

The symbol $C(I: X),$ where $I={\mathbb R}$ or $I=[0,\infty),$ stands for the space of all $X$-valued
continuous functions on the interval $I$. By $C_{b}(I: X)$ (respectively, $BUC(I: X)$) we denote the subspaces of $C(I: X)$ consisting of all bounded (respectively, all bounded uniformly continuous functions). Both $C_{b}(I: X)$ and $BUC(I: X)$ are Banach spaces with the sup-norm. This also holds for the space $C_{0}(I : X)$ consisting of all continuous functions $f : I \rightarrow X$ such that $\lim_{|t|\rightarrow +\infty}f(t)=0.$

Suppose that $f : {\mathbb R} \rightarrow X$ is continuous. Then it is said that \index{function!almost automorphic}
$f(\cdot)$ is almost automorphic if and only if for every real sequence $(b_{n})$ there exist a subsequence $(a_{n})$ of $(b_{n})$ and a map $g : {\mathbb R} \rightarrow X$ such that
\begin{align}\label{first-equ}
\lim_{n\rightarrow \infty}f\bigl( t+a_{n}\bigr)=g(t)\ \mbox{ and } \  \lim_{n\rightarrow \infty}g\bigl( t-a_{n}\bigr)=f(t),
\end{align}
pointwise for $t\in {\mathbb R}.$ If this is the case, then $f\in C_{b}({\mathbb R} : X)$ and the limit function $g(\cdot)$ is bounded on ${\mathbb R}$ but not necessarily continuous on ${\mathbb R}.$ Furthermore, if the convergence of limits appearing in \eqref{first-equ} is uniform on compact subsets of ${\mathbb R},$ then it is said that $f(\cdot)$ is compactly almost automorphic. Recall that an almost automorphic function $f(\cdot)$ is compactly almost automorphic if and only if it is uniformly continuous \cite[Lemma 3.7]{Es-sebbar}. 
Concerning two-parameter almost automorphic functions, we want only to recall that the authors of \cite{diagana}\ and \cite{nova-mono} have used the following notion:
A jointly continuous function $F : {\mathbb R} \times X \rightarrow Y$ is said to be almost
automorphic if and only if
for every sequence of real numbers $(s_{n}')$ there exists a subsequence $(s_{n})$ such
that
$$
G(t;x) := \lim_{n\rightarrow \infty}F\bigl(t +s_{n};x\bigr)
$$
is well defined for each $t \in {\mathbb R}$ and $x\in X,$ and
$$
\lim_{n\rightarrow \infty}G\bigl(t -s_{n} ; x\bigr) = F(t ; x)
$$
for each $t \in {\mathbb R}$ and $x\in X.$ This definition can be modified in the following sense which will be important in our consideration of case [L2] below (a general notion with a non-empty collection of subsets ${\mathcal B}$ of certain subsets of $X$ can be also introduced): 
A jointly continuous function $F : {\mathbb R} \times X \rightarrow Y$ is said to be almost
automorphic, uniformly on bounded subsets of $X$ if and only if for every bounded subset $B$ of $X$ and
for every sequence of real numbers $(s_{n}'),$ there exists a subsequence $(s_{n})$ such
that
$$
G(t;x) := \lim_{n\rightarrow \infty}F\bigl(t +s_{n};x\bigr)
$$
is well defined for each $t \in {\mathbb R}$ and $x\in X,$ uniformly for $x\in B$ and
$$
\lim_{n\rightarrow \infty}G\bigl(t -s_{n} ; x\bigr) = F(t ; x)
$$
for each $t \in {\mathbb R}$ and $x\in X,$ uniformly for $x\in B.$ \\

If the above limits converge uniformly on compact intervals of $ \mathbb{R}$, then we say that $F$ is compactly almost automorphic uniformly on bounded sets of $X$. 

\section{$({\mathrm R},{\mathcal B})$-Multi-almost automorphic type functions}\label{maremare}

Throughout this paper, we assume that $n\in {\mathbb N},$   
${\mathcal B}$ is a non-empty collection of subsets of $X$ and  ${\mathrm R}$ is a non-empty collection of sequences in ${\mathbb R}^{n};$  usually, ${\mathcal B}$ denotes the collection of all bounded subsets of $X$ or all compact subsets
of $X.$ Henceforth we will always assume that there exist $x\in X$ and $B\in {\mathcal B}$ such that $x\in B.$

In this section, we investigate $({\mathrm R},{\mathcal B})$-multi-almost automorphic functions and asymptotically $({\mathrm R},{\mathcal B})$-multi-almost automorphic functions. 

\begin{defn}\label{eovako}\index{function!(compactly) $({\mathrm R},{\mathcal B})$-multi-almost automorphic}
Suppose that $F : {\mathbb R}^{n} \times X \rightarrow Y$ is a continuous function. Then 
we say that the function $F(\cdot;\cdot)$ is $({\mathrm R},{\mathcal B})$-multi-almost automorphic if and only if for every $B\in {\mathcal B}$ and for every sequence $({\bf b}_{k}=(b_{k}^{1},b_{k}^{2},\cdot \cdot\cdot ,b_{k}^{n})) \in {\mathrm R}$ there exist a subsequence $({\bf b}_{k_{l}}=(b_{k_{l}}^{1},b_{k_{l}}^{2},\cdot \cdot\cdot , b_{k_{l}}^{n}))$ of $({\bf b}_{k})$ and a function
$F^{\ast} : {\mathbb R}^{n} \times X \rightarrow Y$ such that
\begin{align}\label{love12345678}
\lim_{l\rightarrow +\infty}F\bigl({\bf t} +(b_{k_{l}}^{1},\cdot \cdot\cdot, b_{k_{l}}^{n});x\bigr)=F^{\ast}({\bf t};x) 
\end{align}
and
\begin{align}\label{love123456789}
\lim_{l\rightarrow +\infty}F^{\ast}\bigl({\bf t} -(b_{k_{l}}^{1},\cdot \cdot\cdot, b_{k_{l}}^{n});x\bigr)=F({\bf t};x),
\end{align}
pointwise for all $x\in B$ and ${\bf t}\in {\mathbb R}^{n}.$ If the above limits converge uniformly on compact subsets of ${\mathbb R}^{n}$, then we say that 
$F(\cdot ; \cdot)$ is compactly $({\mathrm R},{\mathcal B})$-multi-almost automorphic.  By $AA_{({\mathrm R},{\mathcal B})}({\mathbb R}^{n} \times X : Y)$ and $AA_{({\mathrm R},{\mathcal B},{\bf c})}({\mathbb R}^{n} \times X : Y)$ we denote the spaces consisting of all 
$({\mathrm R},{\mathcal B})$-multi-almost automorphic functions and compactly $({\mathrm R},{\mathcal B})$-multi-almost automorphic functions, respectively.\index{space!$AA_{({\mathrm R},{\mathcal B})}({\mathbb R}^{n} \times X : Y)$} \index{space!$AA_{({\mathrm R},{\mathcal B},{\bf c})}({\mathbb R}^{n} \times X : Y)$}
\end{defn}
Notice that, in particular, if the function $F({\bf t};x) =F( {\bf t} )$ (i.e., $F : {\mathbb R}^{n} \rightarrow Y$), then we say that $F$ is ${\mathrm R}$-multi-almost automorphic and that space is denoted by $AA_{{\mathrm R}}({\mathbb R}^{n} : Y)$. 
\begin{rem} \label{Remark bi-aa}
The following special cases are very important:
\begin{itemize}
\item[L1.] Let ${\mathrm R}:=\{b : {\mathbb N} \rightarrow {\mathbb R}^{n} \, ; \, \mbox{ for all }j\in {\mathbb N}\mbox{ we have }b_{j}\in \{ (a,a,a,\cdot \cdot \cdot, a) \in {\mathbb R}^{n} : a\in {\mathbb R}\}\}.$  In the case where $n=2$ and ${\mathcal B}$ is the collection of all bounded subsets of $X,$ we say that the function $F(\cdot;\cdot)$ is bi-almost automorphic. The concept of bi-almost automorphy was introduced in literature in \cite[Definition 2.7]{chavez3} to study the asymptotic behavior of nonautonomous evolution equations in Banach spaces, see also \cite{Moi2,chavez1} and references therein. The research study \cite{chavez1} by A. Ch\'avez, S. Castillo and M. Pinto where the authors have used the notion of bi-almost automorphy of the Green functions in their investigation of almost automorphic solutions of abstract differential equations with piecewise constant arguments. Here, the pivot space is denoted in general by $X$ which equals to the pivot product space in \cite{chavez1}. Furthermore, in \cite{Moi2}, the authors proved the existence and uniqueness of $\mu$-pseudo almost automorphic solutions to a class of nonautonomous evolution equations with inhomogeneous boundary conditions, using the notion of bi-almost automorphic Green functions. In addition, the authors established sufficient weak conditions on the initial data of the equation insuring the bi-almost automorphy of the associated Green function, see \cite{Moi2}. In general case with the use of this collection of sequences in ${\mathbb R}^{n}$, we have that the function $F(\cdot ; \cdot)$ is $({\mathrm R},{\mathcal B})$-multi-almost automorphic if and only if for every $B\in {\mathcal B}$ and for every real sequence $(b_{k})$ there exist a subsequence $(a_{k})$ of $(b_{k})$ and a function
 $F^{\ast} : {\mathbb R}^{n} \times X \rightarrow Y$ such that
\begin{align*}
\lim_{k\rightarrow +\infty}\Bigl\| F\bigl({\bf t}
+(a_{k},\cdot \cdot\cdot, a_{k});x\bigr)-F^{\ast}({\bf t};x) \Bigr\|_{Y}=0
\end{align*}
and
\begin{align*}
\lim_{k\rightarrow +\infty}\Bigl\| F^{\ast}\bigl({\bf t}-(a_{k},\cdot \cdot\cdot, a_{k});x\bigr)-F({\bf t};x) \Bigr\|_{Y}=0,
\end{align*}
pointwise for all $x\in B$ and ${\bf t}\in {\mathbb R}^{n}.$
%\item[L1.] ${\mathrm R}=\{b : {\mathbb N} \rightarrow {\mathbb R}^{n} \, ; \, \mbox{ for all }j\in {\mathbb N}\mbox{ we have }b_{j}\in \{ (a,a,a,\cdot \cdot \cdot, a) \in {\mathbb R}^{n} : a\in {\mathbb R}\}\}.$  
%If $n=2$ and ${\mathcal B}$ denotes the collection of all bounded subsets of $X,$ then we particularly get the notion of bi-almost automorphicity introduced in \cite[Definition 2.7]{chavez3}; notice only that we will use only one pivot space $X$ here in place of the pivot space ${\mathbb X} \times {\mathbb Y}$ used in \cite{chavez3}, which seems to be a slightly redundant. In general case with the use of this collection of sequences in ${\mathbb R}^{n}$, we have that the function $F(\cdot ; \cdot)$ is $({\mathrm R},{\mathcal B})$-multi-almost automorphic if and only if for every $B\in {\mathcal B}$ and for every real sequence $(b_{k})$ there exist a subsequence $(a_{k})$ of $(b_{k})$ and a function
% $F^{\ast} : {\mathbb R}^{n} \times X \rightarrow Y$ such that
%\begin{align*}
%\lim_{k\rightarrow +\infty}\Bigl\| F\bigl({\bf t}
%+(a_{k},\cdot \cdot\cdot, a_{k});x\bigr)-F^{\ast}({\bf t};x) \Bigr\|_{Y}=0
%\end{align*}
%and
%\begin{align*}
%\lim_{k\rightarrow +\infty}\Bigl\| F^{\ast}\bigl({\bf t}-(a_{k},\cdot \cdot\cdot, a_{k});x\bigr)-F({\bf t};x) \Bigr\|_{Y}=0,
%\end{align*}
%pointwise for all $x\in B$ and ${\bf t}\in {\mathbb R}^{n}.$
\item[L2.] 
${\mathrm R}=\{b : {\mathbb N} \rightarrow {\mathbb R}^{n} \, ; \, \mbox{ for all }j\in {\mathbb N}\mbox{ we have }b_{j}\in\{(a,0,0,\cdot \cdot \cdot, 0) \in {\mathbb R}^{n} : a\in {\mathbb R}\}\}.$ Without going into full details, we want only to consider here the case in which $X\in {\mathcal B}$ (the choice in which ${\mathcal B}$ is a collection of all bounded or compact subsets of $X$ is a bit complicated but the obtained conclusions are similar; the difficulty actually lies in the fact that a bounded (compact) set in the space ${\mathbb R}^{n-1} \times X$ is not necessarily a direct product of a bounded (compact) set in ${\mathbb R}^{n-1}$ and a bounded (compact) set in $X$); then it can be simply approved that 
the function 
$F(\cdot ;\cdot)$ is (compactly) $({\mathrm R},{\mathcal B})$-multi-almost automorphic if and only if the function ${\mathbb F} : {\mathbb R} \times {\mathcal X}
\rightarrow Y,$ given by 
${\mathcal F}(t ;{\mathcal x}):=F((t,{\bf t'}); x),$ $t\in {\mathbb R},$ ${\mathcal x} =({\bf t'}; x)\in {\mathcal X}={\mathbb R}^{n-1} \times X.$  
This implies that the notion introduced in 
Definition \ref{multi33} cannot be viewed as some special case of the notion of almost automorphic function from ${\mathbb R} \times X$ into $Y.$
\item[L3.] ${\mathrm R}$ is a collection of all sequences $b(\cdot)$ in ${\mathbb R}^{n}.$ This is the limit case in our analysis because, in this case, any 
$({\mathrm R},{\mathcal B})$-multi-almost automorphic function is automatically $({\mathrm R}_{1},{\mathcal B})$-multi-almost automorphic for any other collection ${\mathcal R}_{1}$ of sequences $b(\cdot)$ in ${\mathbb R}^{n}.$
\end{itemize}
\end{rem} 

In what follows we provide some examples illustrating the concept of $({\mathrm R},{\mathcal B})$-multi-almost automorphic functions. 
\begin{example}\label{exa01} Let the scalar function $ \varphi : \mathbb{R}\longrightarrow \mathbb{R} $ be almost automorphic and let $ (T(t))_{t\geq 0} \subset L(X)$ be uniformly bounded (i.e. $\sup_{t\in \mathbb{R}}\|T(t)\|_{L(X)} \leq M$) family of strongly continuous operators on $X$. Define a new function $G:\mathbb{R}^2\times X \to X$ by
$$G(t,s;x)=e^{\int_s^t \varphi(\tau) d\tau } T(t-s)x, \quad (t,s) \in \mathbb{R}^{2}, x \in X .$$ 
The function $G$ yields a bi-almost automorphic function in the sense of Remark \ref{Remark bi-aa}. For a detailed proof, we refer to  \cite[Example 7.1]{chenlin} (see also \cite[Example 4.1]{43-xiao}) where, in this case, $ (T(t))_{t\geq 0} $ presents an exponentially stable strongly continuous semigroup of bounded linear operators (\cite{Nag}). More generally, we have
%Take this example and proof
%For this example, let us remember first the construction of  a Bi-almost automorphic function: Let $f:\mathbb{R}\to X$ be an almost automorphic function, then the new function $G:\mathbb{R}^2\to X$ defined as follows
%$$G(s,t)=\int_s^tf(\tau)d\tau\, ,$$
%is Bi-almost automorphi From \cite[Example 7.1]{chenlin} (see also \cite[Example 4.1]{43-xiao}); we have that the function $G(s,t,x)=T(t-s)e^{\int_s^tf(\xi)d\xi} x$ is Bi-almost automorphic, provided that $f$ is almost automorphic, here $T$ is a semigroup that decays exponentially to $0$ as $t\to +\infty$. This enable us to give the following more general examples:

\begin{enumerate}
\item  For $j\in \{1,2, \cdots, n-1\}$ let $\varphi_j:\mathbb{R}\to \mathbb{R}$ be a family of almost automorphic functions and $T_j$ a family of  semigroups that decays exponentially to $0$ as $t\to +\infty$. Then, $F: \mathbb{R}^n\times X\to X$ defined as follows
$$F(t_1,t_2,\cdots, t_n;x):=\sum_{j=1}^{n-1} T_j(t_{j+1}-t_{j})e^{\int_{t_{j}}^{t_{j+1}} \varphi_j(\xi)d\xi } x\, ,$$
is $({\mathrm R},{\mathcal B})$-multi-almost automorphic, in which $\mathrm{R}=\{b : {\mathbb N} \rightarrow {\mathbb R}^{n} \, ; \, \mbox{ for all }j\in {\mathbb N}\mbox{ we have }b_{j}\in \{ (a,a,a,\cdot \cdot \cdot, a) \in {\mathbb R}^{n} : a\in {\mathbb R}\}\},$  and ${\mathcal B}$ denotes the collection of all bounded subsets of $X.$

\item For $j\in \{1,2, \cdots, n\}$ let $\varphi_j:\mathbb{R}\to \mathbb{R}$ be a family of almost automorphic functions and $T_j$ a family of  semigroups that decays exponentially to $0$ as $t\to +\infty$. Thus,  $F: \mathbb{R}^{2n}\times X\to X$ defined as follows
$$F(t_1,t_2,\cdots, t_n;x):=\sum_{j=1}^{n} T_j(t_{2j}-t_{2j-1})e^{\int_{t_{2j-2}}^{t_{2j}} f_j(\xi)d\xi } x\, ,$$
is $({\mathrm R},{\mathcal B})$-multi-almost automorphic, in which $\mathrm{R}=\{b : {\mathbb N} \rightarrow {\mathbb R}^{n} \, ; \, \mbox{ for all }j\in {\mathbb N}\mbox{ we have }b_{j}\in \{ (a_1,a_1,a_2,a_2,\cdot \cdot \cdot, a_n,a_n) \in {\mathbb R}^{2n} : a_i\in {\mathbb R}\}\},$  and ${\mathcal B}$ denotes the collection of all bounded subsets of $X.$
\end{enumerate}
\end{example}

\begin{example}
The proof of this example shows haw to go in the proof of the next example. For $j=1,2, \cdots , n$, let $f_j$ be a family of almost automorphic function from $\mathbb{R}$ to $\mathbb{R}$. The function
$F:\mathbb{R}^{2n} \to \mathbb{R}$ defined by 
$$(s_1,s_2, \cdots , s_n,t_1,t_2,\cdots , t_n)\to F(s_1,s_2, \cdots , s_n,t_1,t_2,\cdots , t_n)=\prod_{j=1}^n \int _{s_j}^{t_j}f_j(\xi) d\xi\, ,$$
is $\mathrm{R}$-multi-almost automorphic function, where $\mathrm{R}=\{b : {\mathbb N} \rightarrow {\mathbb R}^{n} \, ; \, \mbox{ for all }j\in {\mathbb N}\mbox{ we have }b_{j}\in \{ (a_1,a_2,\cdots ,a_n,a_1,a_2,\cdots ,a_n) \in {\mathbb R}^{2n} : a_i\in {\mathbb R}\}\}$.
\end{example}

\begin{example}\label{ExMulti}
This example generalizes the previous one to multidimensional almost automorphic functions. Let $\mathrm{R}\subset \mathbb{R}^n$ be the set of sequences such that all its subsequences are also in $\mathrm{R}$ and  $\mathrm{R} '\subset \mathbb{R}^m$ be the set of sequences such that all its subsequences are also in $\mathrm{R}'$. For $i=1,2,\cdots ,p$ let $f_i:\mathbb{R}^n \to \mathbb{R}$ be $\mathrm{R}$-almost automorphic and for $j=1,2,\cdots ,q$ let $g_j:\mathbb{R}^m \to \mathbb{R}$ be $\mathrm{R}'$-almost automorphic. Define the new functions $F:\mathbb{R}^n \to \mathbb{R}^q$ by $F({\bf t})=\sum_{i=1}^p f_i({\bf t})e_i$ and $G:\mathbb{R}^m \to \mathbb{R}^q$ by $G({\bf s})=\sum_{j=1}^q g_j({\bf s})e_j$, where $\{e_i,\, i=1, \cdots , p\}$ is the canonical basis of $\mathbb{R}^p$ and $\{e_j,\, j=1, \cdots ,q\}$ is the canonical basis of $\mathbb{R}^q$. Now define the function $F\bigotimes G:\mathbb{R}^{n}\times \mathbb{R}^m\to M_{p\times q}(\mathbb{R})$ by

\begin{equation}
F\bigotimes G\, ({\bf t},{\bf s})= 
\begin{pmatrix}
f_1({\bf t})g_1({\bf s}) & f_1({\bf t})g_2({\bf s}) & \cdots & f_1({\bf t})g_q({\bf s})\\
f_2({\bf t})g_1({\bf s}) & f_2({\bf t})g_2({\bf s}) & \cdots & f_2({\bf t})g_q({\bf s})\\
\vdots & \vdots & \ddots & \vdots\\
f_p({\bf t})g_1({\bf s}) & f_p({\bf t})g_2({\bf s}) & \cdots & f_p({\bf t})g_q({\bf s})
\end{pmatrix}
\end{equation}
It is not difficult to see that $F\bigotimes G$ is $\mathrm{R}\times \mathrm{R}'$-almost automorphic, where $\mathrm{R}\times \mathrm{R}'=\{({\bf b},{\bf b'})\, : \, {\bf b} \in \mathrm{R}\, ,\, \,{\bf b'} \in \mathrm{R}'\}$. 

Observe that, for $p=q$ the function $F \widehat{\bigotimes} G:\mathbb{R}^{n}\times \mathbb{R}^m\times \mathbb{R}^p \to \mathbb{R}^p$ defined by 
$$F \widehat{\bigotimes} G\, ({\bf t},{\bf s},x)= F\bigotimes G\, ({\bf t},{\bf s})\cdot x\, \,$$
is $(\mathrm{R}\times \mathrm{R}',\mathcal{B})$-almost automorphic with $\mathcal{B}$ being the collection of bounded subsets of $\mathbb{R}^p$.
\end{example}

\begin{example}\label{exam001} In order to have more examples, we need to modify the continuity of $F$ in Definition \ref{eovako} accordingly. This example comes from the theory of differential equations with piecewise constant argument \cite{chavez1,chavez2}. In the mentioned papers, we found the following definition \cite[Definition 2.3]{chavez2} :
\begin{defn}
Let $f:\mathbb{R}\to \mathbb{C}^p$  be a bounded and piecewise continuous function. $f$ is said to be $\mathbb{Z}$-almost automorphic if for any sequence of integer numbers $\{s_n\}$, there exist a subsequence $\{s^{'}_n\} \subseteq \{s_n\}$ and a function $\tilde{f}:\mathbb{R}\to \mathbb{C}^p$ such that the following pointwise limits hold:
$$\lim_{n\to \infty}f(t+s_n^{'})=\tilde{f}(t)\, ,$$
$$\lim_{n\to \infty}f(t+s_n^{'})=\tilde{f}(t)\, .$$
\end{defn}
Let $A(t)$ be an almost automorphic matrix, and $\phi (t)$ a fundamental matrix solution of the system $x'(t)=A(t)x(t)$, then the Green matrix
$$ \Phi(t,s)=\phi(t)\phi^{-1}(s)\, ,$$
is Bi-almost automorphic \cite[Lemma 3.2]{chavez2}. Moreover the functions
$$\Phi(t,[t]),\, \, \, \, \int_{[t]}^t \Phi(t,\eta)B(\eta)d \eta\, ,$$
are $\mathbb{Z}$-almost automorphic, for $B$ an almost automorphc function, see \cite[Lemma 3.3]{chavez2}. Also,  if $g\in AA(\mathbb{R}\times \mathbb{C}^p \times \mathbb{C}^p; \mathbb{C}^p)$ and is uniformly continuous on compact subsets of $\mathbb{C}^p \times \mathbb{C}^p$ and $\psi \in AA(\mathbb{R}; \mathbb{C}^p)$, then the composition map $g(t,\psi(t), \psi ([t]))$ is $\mathbb{Z}$-almost automorphic. Whit this preliminary exposition we construct the piecewise continuous  function: $F:\mathbb{R}^3\times \mathbb{C}^p\to \mathbb{C}^p$, such that
\begin{eqnarray*}
F(t,s,u,\zeta)&=&||\zeta||_{\mathbb{C}^p}\left( \int_{[t]}^t\Phi(t, \eta) B(\eta) d \eta+ g(s,\psi(s),\psi([s]))\right.  \\ &&\left. +\sum_{j=1}^p\sin(\dfrac{1}{2+\cos(u) + \cos(\sqrt{2}u)})e_j \right),
\end{eqnarray*}
where $\{e_j,\, j=1, \cdots , p\}$ is the canonical basis of $\mathbb{R}^p$. Then, $F$ is $({\mathrm R},{\mathcal B})$-multi-almost automorphic, with $\mathrm{R}=\mathbb{Z}\times \mathbb{Z}\times \mathbb{R}$ and $\mathcal{ B}$ is the collection of bounded subsets of $\mathbb{C}^p$\, . 
\end{example}
\begin{rem}\label{ludar}
The first systematic study of almost automorphic functions on topological groups was conducted by 
W. A. Veech in \cite{veech}-\cite{veech-prim} (see also the papers \cite{Reich} by A. Reich and \cite{terras} by R. Terras). Following P. Milnes \cite{Milnes}, who considered only the scalar-valued case, we say that a continuous function $f : G\rightarrow  Y,$ where $G$ is a semitopological group, is almost automorphic if and only if for any sequence $(n_{i}')$ in $G$ there exists a subsequence $(n_{i})$ of $(n_{i}')$ such that the joint limit $\lim_{i,j}f(n_{i}n_{j}^{-1}t)=f(t)$ exists for all $t\in G.$ It is clear that ${\mathbb R}^{n} \times X$ is a semitopological group as well as that the notion introduced in Definition \ref{eovako} can be extended in this rather general framework. Observing that the element $x$ in the equations \eqref{love12345678}-\eqref{love123456789} is fixed, we have that any almost automorphic function $F : {\mathbb R}^{n} \times X \rightarrow Y$ in the sense of the above definition is $({\mathrm R},{\mathcal B})$-multi-almost automorphic, where ${\mathrm R}$ is a collection of all sequences $b(\cdot)$ in ${\mathbb R}^{n}$ and ${\mathcal B}$ is the collection of all subsets of $X$. For more details about almost periodic functions on topological groups, see the research monograph \cite{188} by B. M. Levitan.
\end{rem}
\begin{rem} From Remark \ref{ludar}, we may observe that for a given continuous function $f:G\to Y$ to be almost automorphic, it is important to start with any sequence in the full domain $G$; in this sense the functions presented in Examples \ref{exa01} and \ref{ExMulti} are not almost automorphic in the classical sense, but they are $({\mathrm R},{\mathcal B})$-multi-almost automorphic for concrete not empty sets $\mathrm{R}$. This clarifies the differences between this new class of functions and the classical ones known in the literature (\cite{30}).
\end{rem}
\subsection{Further properties of $({\mathrm R},{\mathcal B})$-multi-almost automorphic functions}
In order to give more interesting results about $({\mathrm R},{\mathcal B})$-multi-almost automorphic functions, we introduce the following concept. 

\begin{defn}\label{ (R,B) uniformly continuous}
Let $F : {\mathbb R}^{n} \times X \rightarrow Y$. Then, the function $F(\cdot;\cdot)$ is called $({\mathrm R},{\mathcal B})$-uniformly continuous if and only if for every $B\in {\mathcal B}$ and for every two sequences $({\bf a}_{k}=(a_{k}^{1},a_{k}^{2},\cdots ,a_{k}^{n})) , ({\bf b}_{k}=(b_{k}^{1},b_{k}^{2},\cdot \cdot\cdot ,b_{k}^{n})) \in {\mathrm R}$ such that $ |{\bf a}_{k}-{\bf b}_{k} | \rightarrow 0$  we have 
\begin{align}
\sup_{x\in B}\| F \bigl({\bf a}_{k};x\bigr) - F\bigl({\bf b}_{k};x\bigr) \|_{Y} \rightarrow 0,
\end{align}
as $ k\rightarrow +\infty $.
\end{defn}
If the function $F$ is uniformly continuous (in the classical sense) in ${\bf t}$ uniformly on bounded subsets of $X$. Then, in particular, $F$ is $({\mathrm R},{\mathcal B})$-uniformly continuous with ${\mathrm R}$ is the collection of all sequences of $\mathbb{R}^n$ and ${\mathcal B}$ is the collection of all bounded subsets of $X$. As in the usual considerations of almost automorphic functions, we have:

\begin{prop}\label{eovako54321}\index{function!(compactly) $({\mathrm R},{\mathcal B})$-multi-almost automorphic}
Suppose that $F : {\mathbb R}^{n} \times X \rightarrow Y$ is a continuous function. Then 
the function $F(\cdot;\cdot)$ is $({\mathrm R},{\mathcal B})$-multi-almost automorphic if and only if for every $B\in {\mathcal B}$ and for every sequence $({\bf b}_{k}=(b_{k}^{1},b_{k}^{2},\cdot \cdot\cdot ,b_{k}^{n})) \in {\mathrm R}$ there exist a subsequence $({\bf b}_{k_{l}}=(b_{k_{l}}^{1},b_{k_{l}}^{2},\cdot \cdot\cdot , b_{k_{l}}^{n}))$ of $({\bf b}_{k})$ and a function
$F^{\ast} : {\mathbb R}^{n} \times X \rightarrow Y$ such that
\begin{align}\label{snajkamlada}
\lim_{l\rightarrow +\infty}\lim_{l\rightarrow +\infty} F\Bigl({\bf t} -\bigl(b_{k_{l}}^{1},\cdot \cdot\cdot, b_{k_{l}}^{n}\bigr)+\bigl(b_{k_{m}}^{1},\cdot \cdot\cdot, b_{k_{m}}^{n}\bigr);x\Bigr)=F({\bf t};x) ,
\end{align}
pointwise for all $x\in B$ and ${\bf t}\in {\mathbb R}^{n}.$ 
\end{prop}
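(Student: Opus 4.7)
The plan is to show that the iterated-limit identity \eqref{snajkamlada}, read as $\lim_{l\to+\infty}\lim_{m\to+\infty}$ with outer index $l$ and inner index $m$ (the only interpretation under which the expression $F({\bf t} -{\bf b}_{k_l}+{\bf b}_{k_m};x)$ naturally factors through an intermediate value, since under the opposite order the inner limit $\lim_{l\to+\infty} F(({\bf t}+{\bf b}_{k_m})-{\bf b}_{k_l};x)$ is not directly controlled by the hypothesis), is nothing more than a repackaging of the two one-sided limits \eqref{love12345678} and \eqref{love123456789} from Definition \ref{eovako} along one and the same subsequence, with the same intermediate function $F^{\ast}$. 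Each direction of the equivalence will then reduce to a single substitution, so I do not anticipate any genuine analytic obstacle.

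For the forward direction $(\Rightarrow)$, I would start from the subsequence $({\bf b}_{k_l})$ and the intermediate function $F^{\ast}$ supplied by Definition \ref{eovako}, fix $B\in{\mathcal B}$, $x\in B$, ${\bf t}\in{\mathbb R}^n$, and an integer $l$, and then invoke \eqref{love12345678} at the point ${\bf t}-(b_{k_l}^{1},\ldots,b_{k_l}^{n})$ in place of ${\bf t}$; this identifies the inner limit in \eqref{snajkamlada} as $F^{\ast}({\bf t}-(b_{k_l}^{1},\ldots,b_{k_l}^{n});x)$. The outer limit $l\to+\infty$ then becomes exactly the left-hand side of \eqref{love123456789}, producing $F({\bf t};x)$ as required.

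For the converse $(\Leftarrow)$, I would read off $F^{\ast}$ directly from the assumed existence of the inner limit. Since every ${\bf s}\in{\mathbb R}^n$ can be written as ${\bf t}-(b_{k_1}^{1},\ldots,b_{k_1}^{n})$ by taking, say, $l=1$ and ${\bf t}:={\bf s}+(b_{k_1}^{1},\ldots,b_{k_1}^{n})$, the hypothesis forces
$F^{\ast}({\bf s};x):=\lim_{m\to+\infty}F({\bf s}+(b_{k_m}^{1},\ldots,b_{k_m}^{n});x)$
to exist on all of ${\mathbb R}^n\times X$ (and the value is independent of the particular way ${\bf s}$ is represented, since it is defined by a condition on ${\bf s}$ alone). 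This already recovers \eqref{love12345678}. Substituting this definition back into the outer-limit half of \eqref{snajkamlada} then yields \eqref{love123456789}, which closes the equivalence. The only point that deserves a sentence of care is precisely this verification that the inner limit defines $F^{\ast}$ on all of ${\mathbb R}^n\times X$ and not merely along a single translate of the subsequence; this is ensured because ${\bf t}$ in \eqref{snajkamlada} is quantified over all of ${\mathbb R}^n$.
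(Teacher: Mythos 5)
Your proposal is correct. The paper does not actually supply a proof of Proposition \ref{eovako54321} (it is stated as routine, ``as in the usual considerations of almost automorphic functions''), and your argument is exactly the standard Bochner iterated-limit computation being appealed to: reading the misprinted double limit as $\lim_{l\to+\infty}\lim_{m\to+\infty}$, the forward direction is the substitution ${\bf t}\mapsto{\bf t}-{\bf b}_{k_l}$ in \eqref{love12345678} followed by \eqref{love123456789}, and the converse recovers $F^{\ast}$ as the inner limit (well defined on all of ${\mathbb R}^{n}\times X$ because ${\bf t}$ is quantified over all of ${\mathbb R}^{n}$, and not required to be continuous by Definition \ref{eovako}), which is precisely the intended argument.
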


It is clear that we have the following simple results:

\begin{prop}\label{multi33}
Suppose that $F : {\mathbb R}^{n} \times X \rightarrow Y$ is a continuous function. 
If ${\mathcal B}'$ is a certain collection of subsets of $X$ which contains ${\mathcal B},$ 
${\mathrm R}'$ is a certain collection of sequences in ${\mathbb R}^{n}$ which contains ${\mathrm R}$
and $F(\cdot ;\cdot)$ is (compactly) $({\mathrm R}',{\mathcal B}')$-multi-almost automorphic, then 
 $F(\cdot ;\cdot)$ is (compactly) $({\mathrm R},{\mathcal B})$-multi-almost automorphic.
\end{prop}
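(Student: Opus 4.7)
The plan is to unwind Definition \ref{eovako} directly: the conclusion only requires that for every $B \in \mathcal{B}$ and every sequence in $\mathrm{R}$ we can extract a subsequence and produce a limit function $F^{\ast}$ satisfying \eqref{love12345678}--\eqref{love123456789} (with uniform convergence on compacts of $\mathbb{R}^{n}$ in the compact case). Fix such $B$ and $({\bf b}_{k}) \in \mathrm{R}$. Since $\mathcal{B} \subseteq \mathcal{B}'$ we have $B \in \mathcal{B}'$, and since $\mathrm{R} \subseteq \mathrm{R}'$ the sequence $({\bf b}_{k})$ also belongs to $\mathrm{R}'$. Applying the hypothesis that $F$ is $(\mathrm{R}',\mathcal{B}')$-multi-almost automorphic to this $B$ and $({\bf b}_{k})$ produces a subsequence $({\bf b}_{k_{l}})$ and a function $F^{\ast} : \mathbb{R}^{n} \times X \to Y$ such that \eqref{love12345678} and \eqref{love123456789} hold pointwise for all $x \in B$ and ${\bf t} \in \mathbb{R}^{n}$; this is exactly what Definition \ref{eovako} requires for $F$ to be $(\mathrm{R},\mathcal{B})$-multi-almost automorphic.

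For the compactly multi-almost automorphic case the argument is identical: the hypothesis now says that the limits in \eqref{love12345678}--\eqref{love123456789} converge uniformly on compact subsets of $\mathbb{R}^{n}$ (uniformly for $x$ in each element of $\mathcal{B}'$), and since $B \in \mathcal{B}'$ and the sequence is in $\mathrm{R}'$, the same uniform convergence transfers verbatim to the $(\mathrm{R},\mathcal{B})$-setting. There is no real obstacle here: the statement is a monotonicity property of the defining class with respect to enlarging either parameter $\mathrm{R}$ or $\mathcal{B}$, and the proof is essentially a formal re-reading of Definition \ref{eovako} once one observes that the witnesses $({\bf b}_{k_l})$ and $F^{\ast}$ furnished by the stronger hypothesis serve equally well under the weaker one.
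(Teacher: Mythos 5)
Your proof is correct and is exactly the argument the paper has in mind: the paper states this proposition without proof (introducing it with ``It is clear that we have the following simple results''), and your direct unwinding of Definition \ref{eovako} — noting that $B\in{\mathcal B}\subseteq{\mathcal B}'$ and $({\bf b}_k)\in{\mathrm R}\subseteq{\mathrm R}'$, so the witnesses furnished by the stronger hypothesis serve verbatim — is precisely the intended justification, in both the pointwise and the compact cases.
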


\begin{prop}\label{superstebag}
Suppose that $F : {\mathbb R}^{n} \times X \rightarrow Y$ is $({\mathrm R},{\mathcal B})$-multi-almost automorphic and $\phi : Y \rightarrow Z$ is continuous.
Then $\phi \circ F : {\mathbb R}^{n} \times X \rightarrow Z$ is $({\mathrm R},{\mathcal B})$-multi-almost automorphic.
\end{prop}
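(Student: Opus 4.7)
The plan is to verify the defining conditions of Definition \ref{eovako} for the composite function $\phi \circ F$, using continuity of $\phi$ to transport the pointwise limits guaranteed by the $({\mathrm R},{\mathcal B})$-multi-almost automorphy of $F$. Since $F$ is continuous and $\phi : Y\rightarrow Z$ is continuous, the composition $\phi \circ F : {\mathbb R}^{n}\times X \rightarrow Z$ is automatically continuous, so the regularity hypothesis is free.

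For the substantive part, fix $B\in {\mathcal B}$ and a sequence $({\bf b}_{k})\in {\mathrm R}$. By hypothesis there exist a subsequence $({\bf b}_{k_{l}})$ of $({\bf b}_{k})$ and a function $F^{\ast} : {\mathbb R}^{n} \times X \rightarrow Y$ such that, pointwise for every ${\bf t}\in {\mathbb R}^{n}$ and every $x\in B$, the limits \eqref{love12345678} and \eqref{love123456789} hold. I would then propose the candidate limit function
\[
G^{\ast}({\bf t};x) := \phi\bigl(F^{\ast}({\bf t};x)\bigr),\qquad {\bf t}\in {\mathbb R}^{n},\ x\in X,
\]
and verify the two required limits for $\phi \circ F$ along the same subsequence $({\bf b}_{k_{l}})$.

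For the first limit, by continuity of $\phi$ at $F^{\ast}({\bf t};x)$,
\[
\lim_{l\rightarrow +\infty}(\phi\circ F)\bigl({\bf t}+(b_{k_{l}}^{1},\dots,b_{k_{l}}^{n});x\bigr)=\phi\Bigl(\lim_{l\rightarrow +\infty}F\bigl({\bf t}+(b_{k_{l}}^{1},\dots,b_{k_{l}}^{n});x\bigr)\Bigr)=\phi\bigl(F^{\ast}({\bf t};x)\bigr)=G^{\ast}({\bf t};x).
\]
For the second limit, by continuity of $\phi$ at $F({\bf t};x)$,
\[
\lim_{l\rightarrow +\infty}G^{\ast}\bigl({\bf t}-(b_{k_{l}}^{1},\dots,b_{k_{l}}^{n});x\bigr)=\phi\Bigl(\lim_{l\rightarrow +\infty}F^{\ast}\bigl({\bf t}-(b_{k_{l}}^{1},\dots,b_{k_{l}}^{n});x\bigr)\Bigr)=\phi\bigl(F({\bf t};x)\bigr)=(\phi\circ F)({\bf t};x).
\]
Both identities hold for all ${\bf t}\in {\mathbb R}^{n}$ and $x\in B$, which is exactly the condition required by Definition \ref{eovako}. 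Hence $\phi \circ F \in AA_{({\mathrm R},{\mathcal B})}({\mathbb R}^{n} \times X : Z)$.

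There is essentially no obstacle here: the entire argument rests on commuting a continuous function with a pointwise limit, which is legitimate precisely because $\phi$ is continuous on all of $Y$ (no need for uniform continuity or boundedness of $F^{\ast}$). The only thing to be mindful of is that the definition only demands pointwise, not uniform, convergence, so one should not attempt to prove the compactly-$({\mathrm R},{\mathcal B})$ version without an extra assumption on $\phi$ (e.g., uniform continuity on the relevant range); the present statement asks only for the pointwise version, which the above argument settles directly.
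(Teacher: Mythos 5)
Your proof is correct: defining $G^{\ast}=\phi\circ F^{\ast}$ and passing $\phi$ through the pointwise limits \eqref{love12345678}--\eqref{love123456789} by continuity is exactly the argument the paper has in mind (it states this proposition among the "simple results" and omits the proof). Your closing remark about the compact version requiring more than mere continuity of $\phi$ is also a sensible caveat, though not needed for the statement as given.
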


In the next result we give a very useful characterization of compactly $({\mathrm R},{\mathcal B})$-multi-almost automorphic functions. A similar result was already established in \cite[Lemma 3.7]{Es-sebbar} for the particular case of $(\mathbb{R},\{0_{X}\})$-one-dimensional almost automorphic functions. 

\begin{thm}
Let  $F : {\mathbb R}^{n} \times X \rightarrow Y$ be continuous. Then, $F$ is compactly $({\mathrm R},{\mathcal B})$-multi-almost automorphic if and only $F$ is $({\mathrm R},{\mathcal B})$-multi-almost automorphic and $({\mathrm R},{\mathcal B})$-uniformly continuous.
\end{thm}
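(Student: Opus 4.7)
The proof is the multi-dimensional analog of \cite[Lemma 3.7]{Es-sebbar} and splits into two implications. For the forward direction, $(\mathrm{R},\mathcal{B})$-multi-almost automorphy follows immediately from compactness, since uniform convergence on compacts implies pointwise convergence. To derive $(\mathrm{R},\mathcal{B})$-uniform continuity, I would take $({\bf a}_k),({\bf b}_k)\in\mathrm{R}$ with $|{\bf a}_k-{\bf b}_k|\to 0$ and $B\in\mathcal{B}$, and invoke the subsequence principle: it suffices to find, in every subsequence, a further subsequence along which $\sup_{x\in B}\|F({\bf a}_k;x)-F({\bf b}_k;x)\|_Y\to 0$. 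Applying compact multi-almost automorphy to $({\bf a}_k)$ along an initial subsequence produces an extraction $({\bf a}_{k_l})$ and a limit $F^{\ast}$ with $F({\bf t}+{\bf a}_{k_l};x)\to F^{\ast}({\bf t};x)$ uniformly in ${\bf t}$ on compact subsets of $\mathbb{R}^n$. Writing $F({\bf b}_{k_l};x)=F({\bf t}_l+{\bf a}_{k_l};x)$ with ${\bf t}_l:={\bf b}_{k_l}-{\bf a}_{k_l}\to{\bf 0}$, the compactness of $\{{\bf t}_l\}_l\cup\{{\bf 0}\}$ together with the continuity of $F^{\ast}$ (a uniform limit of continuous functions on compacts) then yield $F({\bf b}_{k_l};x)-F({\bf a}_{k_l};x)\to 0$.

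For the reverse direction, fix $({\bf b}_k)\in\mathrm{R}$ and $B\in\mathcal{B}$, and use $(\mathrm{R},\mathcal{B})$-multi-almost automorphy to extract a subsequence $({\bf b}_{k_l})$ and $F^{\ast}$ satisfying the pointwise limits \eqref{love12345678}--\eqref{love123456789}. The plan is to upgrade these pointwise limits to uniform convergence on compact subsets of $\mathbb{R}^n$ by establishing equicontinuity of the translate family $\{F(\cdot+{\bf b}_{k_l};x)\}_l$ on compact subsets of $\mathbb{R}^n$, then applying the standard Arzel\`a--Ascoli-type fact that pointwise convergence plus equicontinuity on a compact set implies uniform convergence there. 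The equicontinuity will be drawn from $(\mathrm{R},\mathcal{B})$-uniform continuity: if ${\bf s}_l,{\bf t}_l$ lie in a compact set with $|{\bf s}_l-{\bf t}_l|\to 0$, the sequences $({\bf s}_l+{\bf b}_{k_l})$ and $({\bf t}_l+{\bf b}_{k_l})$ have vanishing difference, and uniform continuity forces $\sup_{x\in B}\|F({\bf s}_l+{\bf b}_{k_l};x)-F({\bf t}_l+{\bf b}_{k_l};x)\|_Y\to 0$.

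The main technical obstacle is the interplay between uniformity in ${\bf t}$ and in $x\in B$: the $(\mathrm{R},\mathcal{B})$-multi-almost automorphic property is pointwise in $x$, whereas $(\mathrm{R},\mathcal{B})$-uniform continuity demands a supremum over $B$. Reconciling these requires interpreting the compact multi-almost automorphic limit as uniform simultaneously in both arguments (a natural convention given the parametrization by $B\in\mathcal{B}$), or using joint continuity of $F$ in combination with compactness properties of $B$. A secondary technical issue is ensuring that the translated sequences $({\bf s}_l+{\bf b}_{k_l})$ belong to $\mathrm{R}$ so that uniform continuity applies; this closure property is automatic for the natural collections $\mathrm{R}$ considered in Remark \ref{Remark bi-aa} and Example \ref{ExMulti}.
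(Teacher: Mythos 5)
Your proposal follows the paper's argument in both directions: the reverse implication via equicontinuity of the translates $F(\cdot+{\bf b}_{k_l};x)$ drawn from $({\mathrm R},{\mathcal B})$-uniform continuity, upgraded to uniform convergence on compacts by a finite-cover three-epsilon argument, and the forward implication via the subsequence principle together with the three-term estimate through $F^{\ast}$ evaluated at the points ${\bf c}_{k_l}={\bf a}_{k_l}-{\bf b}_{k_l}\rightarrow {\bf 0}$. The two caveats you flag --- the mismatch between pointwise-in-$x$ limits and the $\sup_{x\in B}$ demanded by uniform continuity, and the need for the perturbed sequences ${\bf s}_l+{\bf b}_{k_l}$ to lie in ${\mathrm R}$ (which, note, is \emph{not} automatic for the diagonal collection of case [L1], contrary to your parenthetical claim) --- are both silently assumed in the paper's own proof, so they do not distinguish your argument from theirs.
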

\begin{proof}
Let $ ({\mathrm R},{\mathcal B}) $ be fixed. Assume that $F$ is $({\mathrm R},{\mathcal B})$-multi-almost automorphic and $({\mathrm R},{\mathcal B})$-uniformly continuous. Then for every sequence $({\bf b}_{k}=(b_{k}^{1},b_{k}^{2},\cdot \cdot\cdot ,b_{k}^{n})) \in {\mathrm R}$ there exist a subsequence $({\bf b}_{k_{l}}=(b_{k_{l}}^{1},b_{k_{l}}^{2},\cdot \cdot\cdot , b_{k_{l}}^{n}))$ of $({\bf b}_{k})$ and a function
$F^{\ast} : {\mathbb R}^{n} \times X \rightarrow Y$ such that for each ${\bf t}\in {\mathbb R}^{n}$, we have
\begin{align}\label{love12345678}
\lim_{l\rightarrow +\infty}F\bigl({\bf t} +{\bf b}_{k_{l}};x\bigr)=F^{\ast}({\bf t};x) 
\end{align}
and
\begin{align}\label{love123456789}
\lim_{l\rightarrow +\infty}F^{\ast}\bigl({\bf t} -{\bf b}_{k_{l}};x\bigr)=F({\bf t};x),
\end{align}
uniformly for all $x\in B$. Let $x \in B$ and define the sequence 
$$   F_l (\cdot; x)=F(\cdot +{\bf b}_{k_{l}};x), \quad l \in \mathbb{N}.  $$ 
Then, from the $({\mathrm R},{\mathcal B})$-uniform continuity it follows that the family $ (F_l (\cdot;x))_l $ is equicontinuous. Hence, the convergence \eqref{love12345678} implies that $F^{\ast}({\cdot};x)$ is continuous. So, the convergence \eqref{love12345678} holds uniformly in compact subsets of  $\mathbb{R}^n $. Indeed, let $ \varepsilon >0 $ and let $I_c \subset \mathbb{R}^n$ be any compact subset. That is, there exist $ \delta >0 $ and $ \lbrace {\bf t}_{1},\cdots,{\bf t}_{m} \rbrace \subset I_c$ such that $I_c \subset \cup_{i=1}^{m} B({\bf t}_{i},\delta)  $. Thus, for all $ {\bf t} \in I_c $, there exists a certain  $i_0 \in {1,\cdots,m }$ satisfying $|{\bf t}-{\bf t}_{i_0}  | < \delta $. Then,  for all $ l $, we get
\begin{align*}
\| F_l ({\bf t}; x)- F^{\ast}({\bf t};x)\| & \leq \| F_l ({\bf t}; x)- F_l ({\bf t}_{i_0};x)\|+\| F_l ({\bf t}_{i_0}; x)- F^{\ast}({\bf t}_{i_0};x)\| \\
&+ \| F^{\ast}({\bf t}_{i_0};x)- F^{\ast}({\bf t};x)\|.
\end{align*}
The equicontinuity of $ (F_l (\cdot;x))_l $ along with the continuity of $F^{\ast}(\cdot;x)$ yield for $l$ large enough that
$$ \sup_{x\in B} \sup_{{\bf t} \in I_c} \| F_l ({\bf t}; x)- F^{\ast}({\bf t};x)\|  \rightarrow 0 \quad \text{ as } l\rightarrow \infty .$$
Similarly, we can prove that the limit in \eqref{love123456789} holds uniformly in compact subsets of $ \mathbb{R}^n $ using the $({\mathrm R},{\mathcal B})$-uniform continuity of $ F^{\ast}(\cdot;x)$ which is a consequence of \eqref{love12345678}. This proves the sufficiency.\\

Reciprocally, assume that $F$ is compactly $({\mathrm R},{\mathcal B})$-multi-almost automorphic. Then, it is $({\mathrm R},{\mathcal B})$-multi-almost automorphic. Therefore, it suffices to prove that $F$ is $({\mathrm R},{\mathcal B})$-uniformly continuous. Let $({\bf a}_{k}=(a_{k}^{1},a_{k}^{2},\cdots ,a_{k}^{n})) , ({\bf b}_{k}=(b_{k}^{1},b_{k}^{2},\cdot \cdot\cdot ,b_{k}^{n})) \in {\mathrm R}$ be any two sequences such that $ |{\bf a}_{k}-{\bf b}_{k} | \rightarrow 0$.  Then, we will prove that 
\begin{align*}
\sup_{x\in B}\| F \bigl({\bf a}_{k};x\bigr) - F\bigl({\bf b}_{k};x\bigr) \|_{Y} \rightarrow 0, 
\end{align*}
as $ k\rightarrow \infty $. Take any subsequences $ ({\bf a}_{k_{l}})_{l} $ and $ ({\bf b}_{k_{l}})_{l} $ of $ ({\bf a}_{k})_k  $ and $ ( {\bf b}_{k})_k  $ respectively. By the compact $({\mathrm R},{\mathcal B})$-multi-almost automorphy, there exist another two subsequences (still denoted by) $ ({\bf a }_{k_{l}})_l $ and $ ({\bf b }_{k_{l}})_l $ and a continuous function $F^{\ast}$ such that \eqref{love12345678} holds uniformly in compact subsets of $ \mathbb{R}^n $. Define $ {\bf c}_{k_{l}}:={\bf a}_{k_{l}}-{\bf b}_{k_{l}}, $ $l\in \mathbb{N} $ (by assumption $ | {\bf c}_{k_{l}}|\rightarrow 0 $ as $ l\rightarrow \infty $) and let $I_c \subset \mathbb{R}^n$ be compact such that  ${\bf c}_{k_{l}} \in I_c $ for all $ l \in \mathbb{N}$. Then, we have
\begin{align*}
&\| F \bigl({\bf a}_{k_{l}};x\bigr) - F\bigl({\bf b}_{k_{l}};x\bigr) \|_{Y} \leq \\
&\| F \bigl({\bf c}_{k_{l}}+{\bf b}_{k_{l}};x\bigr) - F^{\ast}\bigl({\bf c}_{k_{l}};x\bigr) \|_{Y} + \|F^{\ast} \bigl(0;x\bigr)- F^{\ast}\bigl({\bf c}_{k_{l}} ;x\bigr)  \|_{Y} \\ &+ \|F \bigl({\bf b}_{k_{l}};x\bigr)- F^{\ast}\bigl(0;x\bigr)  \|_{Y} \\
&\leq \sup_{{\bf t}\in I_c}\| F \bigl({\bf t}+{\bf b}_{k_{l}};x\bigr) - F^{\ast}\bigl({\bf t};x\bigr) \|_{Y} + \|F^{\ast} \bigl(0;x\bigr)- F^{\ast}\bigl({\bf c}_{k_{l}} ;x\bigr)  \|_{Y} \\ &+ \|F \bigl({\bf b}_{k_{l}};x\bigr)- F^{\ast}\bigl(0;x\bigr)  \|_{Y}.
\end{align*}
So, using  the compact almost automorphy of $F$ and the continuity of $F^{\ast} \bigl(\cdot;x\bigr)$, we obtain that 
$$\sup_{x\in B} \| F \bigl({\bf a}_{k_{l}};x\bigr) - F\bigl({\bf b}_{k_{l}};x\bigr) \|_{Y} \rightarrow 0 \quad \text{ as } l\rightarrow \infty.$$
Since a subsequence of any subsequence of $ \left( \sup_{x\in B} \| F \bigl({\bf a}_{k};x\bigr) - F\bigl({\bf b}_{k};x\bigr) \|_{Y}\right) _{k} $ converges. Then (the whole sequence), 
$$\sup_{x\in B} \| F \bigl({\bf a}_{k};x\bigr) - F\bigl({\bf b}_{k};x\bigr) \|_{Y} \rightarrow 0 \quad \text{ as } k \rightarrow \infty.$$
This proves the statement. 
\end{proof}

In \cite[Lemma 3.9.9]{nova-mono}, we have clarified the supremum formula for almost automorphic functions. This formula can be extended in our framework as follows:

\begin{prop}\label{superste} (The supremum formula)
Let $F : {\mathbb R}^{n} \times X \rightarrow Y$ be $({\mathrm R},{\mathcal B})$-multi-almost automorphic. Suppose that  there exist $a\geq 0$ and a sequence $b(\cdot)$ in ${\mathrm R}$ whose any subsequence is unbounded. 
%and $X'$ is a non-empty subset of $X$ satisfying that, for every $x\in X',$ there exists $B\in {\mathcal B}$ with $x\in B.$
Then we have
\begin{align}\label{m91}
\sup_{{\bf t}\in {\mathbb R}^{n},x\in X}\bigl\|F({\bf t};x) \Bigr\|_{Y}=\sup_{{\bf t}\in {\mathbb R}^{n},|t|\geq a,x\in X}\bigl\|F({\bf t};x) \Bigr\|_{Y}.
\end{align}
\end{prop}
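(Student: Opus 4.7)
The plan is to prove the nontrivial inequality $\sup_{{\bf t}\in\mathbb{R}^n,\,x\in X}\|F({\bf t};x)\|_Y \leq M$, where $M$ denotes the right-hand side of \eqref{m91}; the opposite inequality is immediate since the right-hand side is a supremum over a smaller set. I would first observe that the hypothesis ``every subsequence of $b(\cdot)=({\bf b}_k)$ is unbounded'' is equivalent to $|{\bf b}_k|\to +\infty$, for otherwise a standard extraction yields a bounded subsequence, contradicting the assumption.

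Fix ${\bf t}_0\in \mathbb{R}^n$ and a point $x\in X$ belonging to some $B\in \mathcal{B}$ (such $x$ exist by the blanket assumption on $\mathcal{B}$). Applying $({\mathrm R},{\mathcal B})$-multi-almost automorphy of $F$ to this $B$ and to the sequence $({\bf b}_k)$, I extract a subsequence $({\bf b}_{k_l})$ and a map $F^{\ast}:\mathbb{R}^n\times X\to Y$ such that the two defining pointwise limits of Definition \ref{eovako} hold at the chosen $x$ for every ${\bf t}\in \mathbb{R}^n$. Since $({\bf b}_{k_l})$ is itself a subsequence of $({\bf b}_k)$, it still satisfies $|{\bf b}_{k_l}|\to +\infty$.

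The key step is to show that $\|F^{\ast}({\bf s};x)\|_Y\leq M$ for every ${\bf s}\in \mathbb{R}^n$. For fixed such ${\bf s}$, the triangle inequality gives $|{\bf s}+{\bf b}_{k_l}|\geq |{\bf b}_{k_l}|-|{\bf s}|\to +\infty$, so $|{\bf s}+{\bf b}_{k_l}|\geq a$ for all $l$ sufficiently large, whence $\|F({\bf s}+{\bf b}_{k_l};x)\|_Y\leq M$. Passing to the limit using the first defining relation $F({\bf s}+{\bf b}_{k_l};x)\to F^{\ast}({\bf s};x)$ yields $\|F^{\ast}({\bf s};x)\|_Y\leq M$, as required.

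To conclude, I invoke the second defining relation at ${\bf t}={\bf t}_0$: the sequence $F^{\ast}({\bf t}_0-{\bf b}_{k_l};x)$ has norm uniformly bounded by $M$ by the previous step, and converges to $F({\bf t}_0;x)$, giving $\|F({\bf t}_0;x)\|_Y\leq M$. Taking the supremum over admissible $({\bf t}_0,x)$ completes the proof. I do not expect a genuine obstacle here; the only care needed is to interpret the unboundedness hypothesis correctly as $|{\bf b}_k|\to+\infty$ and to evaluate each of the two defining limits at the right point, so that the norm bound transfers first from the forward orbit $F(\,\cdot+{\bf b}_{k_l};x)$ to the limit $F^{\ast}$, and then from $F^{\ast}$ back to $F$ at the arbitrary point ${\bf t}_0$.
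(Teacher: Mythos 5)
Your proposal is correct and follows essentially the same route as the paper: the paper invokes the iterated-limit characterization \eqref{snajkamlada} and then uses the unboundedness of the translating sequence to push the evaluation point out to the region $|{\bf t}|\geq a$, which is exactly your two-step argument (bound $F^{\ast}$ via the first limit, then bound $F$ via the second) written as a single double limit. If anything, your version is a slightly cleaner rendering, since it makes explicit the intermediate bound $\|F^{\ast}({\bf s};x)\|_{Y}\leq M$ and the reduction of the hypothesis to $|{\bf b}_{k}|\to+\infty$, which the paper leaves implicit.
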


\begin{proof}
We will include all relevent details of the proof for the sake of completeness. Let $\epsilon>0,$ $a\geq 0$ and $x\in X$ be given. Then \eqref{m91} will hold if we prove that 
\begin{align}\label{lozoh}
\bigl\|F({\bf t};x) \Bigr\|_{Y} \leq \epsilon+\sup_{{\bf t}\in {\mathbb R}^{n},|t|\geq a}\bigl\|F({\bf t};x) \Bigr\|_{Y}.
\end{align}
By assumption, there exists $B\in {\mathcal B}$ with $x\in B.$ Let $b(\cdot)$ be a sequence in ${\mathrm R}$ whose any subsequence is unbounded. Then we have \eqref{snajkamlada}, and consequently, there exist two integers $l_{0} \in {\mathbb N}$ and
$m_{0} \in {\mathbb N}$ such that
$$
\bigl\|F({\bf t};x) \Bigr\|_{Y} \leq \epsilon+\Bigl\| F\Bigl({\bf t}-\bigl(b_{k_{l}}^{1},\cdot \cdot\cdot, b_{k_{l}}^{n}\bigr)+\bigl(b_{k_{m}}^{1},\cdot \cdot\cdot, b_{k_{m}}^{n}\bigr); x\Bigr)\Bigr\|_{Y},\quad l\geq l_{0},\ m\geq m_{0}.
$$
In particular,
$$
\bigl\|F({\bf t};x) \Bigr\|_{Y} \leq \epsilon+\Bigl\| F\Bigl({\bf t}-\bigl(b_{k_{l_{0}}}^{1},\cdot \cdot\cdot, b_{k_{l_{0}}}^{n}\bigr)+\bigl(b_{k_{m}}^{1},\cdot \cdot\cdot, b_{k_{m}}^{n}\bigr) ; x\Bigr)\Bigr\|_{Y},\quad m\geq m_{0}.
$$
Since the sequence $((b_{k_{m}}^{1},\cdot \cdot\cdot, b_{k_{m}}^{n})_{m\geq m_{0}} $ is unbounded, \eqref{lozoh} follows immediately.
\end{proof}

Now we will prove also the following result.

\begin{prop}\label{2.1.10}
Suppose that for each integer $j\in {\mathbb N}$ the function $F_{j}(\cdot ; \cdot)$ is (compactly) $({\mathrm R},{\mathcal B})$-multi-almost automorphic. If
the sequence $(F_{j}(\cdot ;\cdot))$ converges uniformly to a function $F(\cdot ;\cdot)$, then the function $F(\cdot ;\cdot)$ is (compactly) $({\mathrm R},{\mathcal B})$-multi-almost automorphic.
\end{prop}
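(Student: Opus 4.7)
The plan is the classical diagonal-subsequence argument, adapted to the $(\mathrm{R},\mathcal{B})$-setting. Throughout, I tacitly assume that subsequences of members of $\mathrm{R}$ still belong to $\mathrm{R}$ (which is the standing hypothesis in the paper's examples and is what makes the iterative application of Definition~\ref{eovako} legitimate).

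\textbf{Step 1 (diagonal extraction).} Fix $B\in\mathcal{B}$ and a sequence $({\bf b}_k)\in\mathrm{R}$. Using the $(\mathrm{R},\mathcal{B})$-multi-almost automorphy of $F_{1}$, extract a subsequence $({\bf b}_k^{(1)})$ and a function $F_1^{\ast}$ realising both pointwise limits of Definition~\ref{eovako} for $F_1$. Iterating, for each $j\ge 2$ extract a subsequence $({\bf b}_k^{(j)})$ of $({\bf b}_k^{(j-1)})$ and a function $F_j^{\ast}$ realising the two limits for $F_j$. Form the diagonal ${\bf c}_l:={\bf b}_l^{(l)}$, which is eventually a subsequence of $({\bf b}_k^{(j)})$ for every $j$, so that simultaneously for every $j\in\mathbb{N}$, $x\in B$ and ${\bf t}\in\mathbb{R}^n$,
$$\lim_{l\to\infty}F_j({\bf t}+{\bf c}_l;x)=F_j^{\ast}({\bf t};x),\qquad \lim_{l\to\infty}F_j^{\ast}({\bf t}-{\bf c}_l;x)=F_j({\bf t};x).$$

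\textbf{Step 2 (existence of $F^{\ast}$).} Define $F^{\ast}({\bf t};x):=\lim_{j\to\infty}F_j^{\ast}({\bf t};x)$. The limit exists because $(F_j^{\ast}({\bf t};x))_j$ is Cauchy:
$$\|F_j^{\ast}({\bf t};x)-F_m^{\ast}({\bf t};x)\|_Y=\lim_{l\to\infty}\|F_j({\bf t}+{\bf c}_l;x)-F_m({\bf t}+{\bf c}_l;x)\|_Y\le \|F_j-F_m\|_{\infty},$$
which tends to $0$ by the uniform convergence $F_j\to F$. The same bound shows that $F_j^{\ast}\to F^{\ast}$ uniformly on $\mathbb{R}^n\times X$, so in particular $F^{\ast}$ is well defined.

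\textbf{Step 3 (verification of the two limits for $F$).} A standard three-epsilon split:
$$\|F({\bf t}+{\bf c}_l;x)-F^{\ast}({\bf t};x)\|_Y\le \|F-F_j\|_{\infty}+\|F_j({\bf t}+{\bf c}_l;x)-F_j^{\ast}({\bf t};x)\|_Y+\|F_j^{\ast}({\bf t};x)-F^{\ast}({\bf t};x)\|_Y.$$
Given $\varepsilon>0$, choose $j$ so large that the first and third terms are each below $\varepsilon/3$; then take $l$ large using Step~1 for the middle term. This proves \eqref{love12345678} for $F$. The reverse limit \eqref{love123456789} is obtained by the symmetric estimate with the roles of $F$ and $F^{\ast}$ interchanged, using that $F_j^{\ast}\to F^{\ast}$ uniformly on $\mathbb{R}^n\times X$.

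\textbf{Step 4 (continuity and compactly case).} Continuity of $F$ follows at once from the uniform convergence of the continuous functions $F_j$. For the compactly case, the only modification is that in the three-term split above, the middle term is to be controlled uniformly for ${\bf t}$ ranging over a prescribed compact $K\subset\mathbb{R}^n$, which is guaranteed by the compactly $(\mathrm{R},\mathcal{B})$-multi-almost automorphy of $F_j$; the first and third terms are already uniform in ${\bf t}$. Hence the limits for $F$ are uniform on $K$, yielding compact $(\mathrm{R},\mathcal{B})$-multi-almost automorphy.

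\textbf{Expected obstacle.} The only genuinely delicate point is the legitimacy of the iterative extraction in Step~1, which requires that each successively extracted subsequence still lies in $\mathrm{R}$ so that the definition can be re-applied to $F_{j+1}$; once this mild closure property is granted, everything else is routine three-epsilon bookkeeping combined with the diagonal trick.
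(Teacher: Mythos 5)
Your proof is correct and follows essentially the same route as the paper's: a diagonal extraction, a Cauchy argument (driven by the uniform convergence of $(F_{j})$) producing $F^{\ast}$ as the limit of the $F_{j}^{\ast}$, and a three-epsilon split for the two limit relations, with the same remark for the compactly case. You are also right to flag the tacit closure of ${\mathrm R}$ under passing to subsequences, which the paper's diagonal procedure uses without comment.
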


\begin{proof}
The proof is very similar to the proof of \cite[Theorem 2.1.10]{gaston} but we will provide all relevant details.
Let the set $B\in {\mathcal B}$ and the sequence $({\bf b}_{k}=(b_{k}^{1},b_{k}^{2},\cdot \cdot\cdot ,b_{k}^{n}))\in {\mathrm R}$ be given. Using the diagonal procedure, we get the existence of 
a subsequence $({\bf b}_{k_{l}}=(b_{k_{l}}^{1},b_{k_{l}}^{2},\cdot \cdot\cdot , b_{k_{l}}^{n}))$ of $({\bf b}_{k})$ such that for each integer $j\in {\mathbb N}$ there exists a function
$F^{\ast}_{j} : {\mathbb R}^{n} \times X \rightarrow Y$ such that
\begin{align}\label{metalac12345}
\lim_{l\rightarrow +\infty}\Bigl\| F_{j}\bigl({\bf t} +(b_{k_{l}}^{1},\cdot \cdot\cdot, b_{k_{l}}^{n});x\bigr)-F^{\ast}_{j}({\bf t};x) \Bigr\|_{Y}=0
\end{align}
and
\begin{align}\label{metalac123456}
\lim_{l\rightarrow +\infty}\Bigl\| F^{\ast}_{j}\bigl({\bf t} -(b_{k_{l}}^{1},\cdot \cdot\cdot, b_{k_{l}}^{n});x\bigr)-F_{j}({\bf t};x) \Bigr\|_{Y}=0,
\end{align}
pointwise for all $x\in B$ and ${\bf t}\in {\mathbb R}^{n}.$  Moreover, the  convergence of the above limits is uniform on compact subsets of ${\mathbb R}^{n}$ if 
for each integer $j\in {\mathbb N}$ the function $F_{j}(\cdot ; \cdot)$ is compactly $({\mathrm R},{\mathcal B})$-multi-almost automorphic.
Fix now an element $x\in B,$ a number ${\bf t}\in {\mathbb R}^{n}$ and positive real number $\epsilon>0.$
Since 
\begin{align*}
\Bigl\| F_{i}^{\ast}({\bf t} ; x) &-F_{j}^{\ast}({\bf t} ; x)\Bigr\|_{Y}\leq \Bigl\| F_{i}^{\ast}({\bf t} ; x)-F_{i}({\bf t}+(b_{k_{l}}^{1},\cdot \cdot\cdot, b_{k_{l}}^{n}) ; x)\Bigr\|_{Y}
\\&+\Bigl\| F_{i}({\bf t} +(b_{k_{l}}^{1},\cdot \cdot\cdot, b_{k_{l}}^{n}); x)-F_{j}({\bf t} +(b_{k_{l}}^{1},\cdot \cdot\cdot, b_{k_{l}}^{n}); x)\Bigr\|_{Y}
\\&+\Bigl\| F_{j}({\bf t} +(b_{k_{l}}^{1},\cdot \cdot\cdot, b_{k_{l}}^{n}); x)-F_{j}^{\ast}({\bf t} ; x)\Bigr\|_{Y},
\end{align*}
and \eqref{metalac12345}-\eqref{metalac123456} hold, we can find a number $l_{0}\in {\mathbb N}$ such that for all integers $l\geq l_{0}$ we have:
\begin{align*}
\Bigl\| F_{i}^{\ast}({\bf t} ; x)-F_{i}({\bf t}+(b_{k_{l}}^{1},\cdot \cdot\cdot, b_{k_{l}}^{n}) ; x)\Bigr\|_{Y}+\Bigl\| F_{j}({\bf t} +(b_{k_{l}}^{1},\cdot \cdot\cdot, b_{k_{l}}^{n}); x)-F_{j}^{\ast}({\bf t} ; x)\Bigr\|_{Y}<2\epsilon/3.
\end{align*}
Since the sequence $(F_{j}(\cdot ;\cdot))$ converges uniformly to a function $F(\cdot ;\cdot),$ there exists $N(\epsilon)\in {\mathbb N}$ such that
for all integers $i,\ j\in {\mathbb N}$ with $\min(i,j)\geq N(\epsilon)$ we have
$$
\Bigl\| F_{i}({\bf t} +(b_{k_{l}}^{1},\cdot \cdot\cdot, b_{k_{l}}^{n}); x)-F_{j}({\bf t} +(b_{k_{l}}^{1},\cdot \cdot\cdot, b_{k_{l}}^{n}); x)\Bigr\|_{Y}<\epsilon/3.
$$
This implies that $(F_{j}^{\ast}({\bf t} ;x))$ is a Cauchy sequence in $Y$ and therefore convergent to an element  $F^{\ast}({\bf t} ;x),$ say (the above arguments also shows that $\lim_{j\rightarrow +\infty}F_{j}^{\ast}({\bf t} ;x)=F^{\ast}({\bf t} ;x)$ uniformly on compact subsets of ${\mathbb R}^{n},$ for fixed element $x$).
It remains to be proved that \eqref{love12345678}-\eqref{love123456789} hold true. Toward this end,
observe that for each $j\in {\mathbb N}$ we have:
\begin{align*}
\Bigl\| F\bigl({\bf t} &+(b_{k_{l}}^{1},\cdot \cdot\cdot, b_{k_{l}}^{n});x\bigr)-F^{\ast}({\bf t};x) \Bigr\|_{Y}
\\& \leq \Bigl\| F\bigl({\bf t} +(b_{k_{l}}^{1},\cdot \cdot\cdot, b_{k_{l}}^{n});x\bigr)-F_{j}\bigl({\bf t} +(b_{k_{l}}^{1},\cdot \cdot\cdot, b_{k_{l}}^{n});x\bigr) \Bigr\|_{Y}
\\& +\Bigl\| F_{j}\bigl({\bf t} +(b_{k_{l}}^{1},\cdot \cdot\cdot, b_{k_{l}}^{n});x\bigr)-F^{\ast}_{j}({\bf t};x) \Bigr\|_{Y}
+ \Bigl\| F^{\ast}_{j}({\bf t};x) - F^{\ast}({\bf t};x) \Bigr\|_{Y}.
\end{align*}
It can be simply shown that there exists a number $j_{0}(\epsilon)\in {\mathbb N}$ such that for all integers $j\geq j_{0}$ we have
that the first and the third addend in the above estimate is less or greater than $\epsilon/3.$ For the second addend, take any integer $l\in {\mathbb N}$
such that
$$
\Bigl\| F_{j}\bigl({\bf t} +(b_{k_{l}}^{1},\cdot \cdot\cdot, b_{k_{l}}^{n});x\bigr)-F^{\ast}_{j}({\bf t};x) \Bigr\|_{Y}<\epsilon/3.
$$
This completes the proof in a routine manner.  
\end{proof}

For the sequel, we need the following general definition:

\begin{defn}\label{kompleks12345}
Suppose that 
${\mathbb D} \subseteq \Omega \subseteq {\mathbb R}^{n}$ and the set ${\mathbb D}$  is unbounded. By $C_{0,{\mathbb D},{\mathcal B}}(\Omega\times X :Y)$ we denote the vector space consisting of all continuous functions $Q : \Omega \times X \rightarrow Y$ such that, for every $B\in {\mathcal B},$ we have $\lim_{t\in {\mathbb D},|t|\rightarrow +\infty}Q({\bf t};x)=0,$ uniformly for $x\in B.$\index{space!$C_{0,{\mathbb D}}(\Omega \times X :Y)$}
\end{defn}

\begin{rem}\label{saznaoje}
In \cite[Definition 16-(i)]{dumath2}, we have used the following notion: Let $I=[0,\infty)$ or $I={\mathbb R}.$
By $C_{0,{\mathcal B}}(I \times X : Y)$ we denote the space of all
continuous functions $Q : I  \times X \rightarrow Y$ such that $\lim_{|t|\rightarrow \infty}Q(t, x) = 0$ uniformly for $x$ in any subset of $B\in {\mathcal B} .$ The notion introduced in Definition \ref{kompleks12345} properly generalizes this notion (see also case [L2.] considered above).
\end{rem}

Now we are ready to introduce the following notion:

\begin{defn}\label{braindamage12345}
Suppose that the set ${\mathbb D} \subseteq {\mathbb R}^{n}$ is unbounded, $i=1,2$ and
$F : {\mathbb R}^{n} \times X \rightarrow Y$ is a continuous function. Then we say that $F(\cdot ;\cdot)$ is 
${\mathbb D}$-asymptotically (compactly) $({\mathrm R},{\mathcal B})$-multi-almost automorphic
if and only if there exist a function $G(\cdot ;\cdot)$ which is (compactly) $({\mathrm R},{\mathcal B})$-multi-almost automorphic and a function
$Q\in C_{0,{\mathbb D},{\mathcal B}}({\mathbb R}^{n}\times X :Y)$ such that
$F({\bf t} ; x)=G({\bf t} ; x)+Q({\bf t} ; x)$ for all ${\bf t}\in {\mathbb R}^{n}$ and $x\in X.$

It is said that $F(\cdot ;\cdot)$ is 
asymptotically (compactly) $({\mathrm R},{\mathcal B})$-multi-almost automorphic if and only if $F(\cdot ;\cdot)$ is 
${\mathbb R}^{n}$-asymptotically (compactly) $({\mathrm R},{\mathcal B})$-multi-almost automorphic.
\index{function!${\mathbb D}$-asymptotically $({\mathrm R},{\mathcal B})$-multi-almost automorphic}
\index{function!compactly ${\mathbb D}$-asymptotically $({\mathrm R},{\mathcal B})$-multi-almost automorphic}
\end{defn}

The proof of the following proposition can be given as for the usually considered almost automorphic functions (\cite{gaston}):

\begin{prop}\label{kontinuitet}
\begin{itemize}
\item[(i)] Suppose that $c\in {\mathbb C}$ and $F(\cdot ; \cdot)$ is (compactly) $({\mathrm R},{\mathcal B})$-multi-almost automorphic.
Then $cF(\cdot ; \cdot)$ is (compactly) $({\mathrm R},{\mathcal B})$-multi-almost automorphic. The same holds for ${\mathbb D}$-asymptotically (compactly) $({\mathrm R},{\mathcal B})$-multi-almost automorphic functions.
\item[(ii)] 
\begin{itemize}
\item[(a)]
Suppose that $\tau\in {\mathbb R}^{n}$ and $F(\cdot ; \cdot)$ is (compactly) $({\mathrm R},{\mathcal B})$-multi-almost automorphic.
Then $F(\cdot +\tau; \cdot)$ is (compactly) $({\mathrm R},{\mathcal B})$-multi-almost automorphic. Furthermore, if $F(\cdot ; \cdot)$ is ${\mathbb D}$-asymptotically (compactly) $({\mathrm R},{\mathcal B})$-multi-almost automorphic,
then $F(\cdot +\tau; \cdot)$ is $({\mathbb D}-\tau)$-asymptotically (compactly) $({\mathrm R},{\mathcal B})$-multi-almost automorphic.
\item[(b)]  Suppose that $x_{0}\in X$ and $F(\cdot ; \cdot)$ is (compactly) $({\mathrm R},{\mathcal B})$-multi-almost automorphic.
Then $F(\cdot; \cdot+x_{0})$ is (compactly) $({\mathrm R},{\mathcal B}_{x_{0}})$-multi-almost automorphic. Furthermore, if $F(\cdot ; \cdot)$ is ${\mathbb D}$-asymptotically (compactly) $({\mathrm R},{\mathcal B})$-multi-almost automorphic, then 
$F(\cdot; \cdot+x_{0})$ is ${\mathbb D}$-asymptotically (compactly) $({\mathrm R},{\mathcal B}_{x_{0}})$-multi-almost automorphic.
\item[(c)] Suppose that $\tau\in {\mathbb R}^{n},$ $x_{0}\in X$ and $F(\cdot ; \cdot)$ is (compactly) $({\mathrm R},{\mathcal B})$-multi-almost automorphic.
Then $F(\cdot +\tau; \cdot +x_{0})$ is (compactly) $({\mathrm R},{\mathcal B}_{x_{0}})$-multi-almost automorphic, where ${\mathcal B}_{x_{0}}\equiv \{-x_{0}+B : B\in {\mathcal B}\}.$ Furthermore, if 
$F(\cdot ; \cdot)$ is ${\mathbb D}$-asymptotically (compactly) $({\mathrm R},{\mathcal B})$-multi-almost automorphic, then $F(\cdot +\tau; \cdot +x_{0})$ is $({\mathbb D}-\tau)$-asymptotically (compactly) $({\mathrm R},{\mathcal B}_{x_{0}})$-multi-almost automorphic.
\end{itemize}
\item[(iii)] \begin{itemize}
\item[(a)]
Suppose that $c\in {\mathbb C} \setminus \{0\}$ and $F(\cdot ; \cdot)$ is (compactly) $({\mathrm R},{\mathcal B})$-multi-almost automorphic.
Then $F(c \cdot ; \cdot)$ is (compactly) $({\mathrm R}_{c},{\mathcal B})$-multi-almost automorphic, where ${\mathrm R}_{c}\equiv \{c^{-1}{\bf b} : {\bf b}\in {\mathrm R}\}.$ Furthermore, if $F(\cdot ; \cdot)$ is ${\mathbb D}$-asymptotically (compactly) $({\mathrm R},{\mathcal B})$-multi-almost automorphic, then
$F(c \cdot ; \cdot)$ is ${\mathbb D}/c$-asymptotically (compactly) $({\mathrm R}_{c},{\mathcal B})$-multi-almost automorphic.
\item[(b)] Suppose that $c_{2}\in {\mathbb C}\setminus \{0\},$ and $F(\cdot ; \cdot)$ is (compactly) $({\mathrm R},{\mathcal B})$-multi-almost automorphic.
Then $F(\cdot ; c_{2}\cdot)$ is (compactly) $({\mathrm R},{\mathcal B}_{c_{2}})$-multi-almost automorphic, where ${\mathcal B}_{c_{2}}\equiv \{c_{2}^{-1}B : B\in {\mathcal B}\}.$ 
Furthermore, if $F(\cdot ; \cdot)$ is ${\mathbb D}$-asymptotically (compactly) $({\mathrm R},{\mathcal B})$-multi-almost automorphic, then
 $F(\cdot ; c_{2}\cdot)$ is  ${\mathbb D}$-asymptotically (compactly) $({\mathrm R},{\mathcal B}_{c_{2}})$-multi-almost automorphic.
\item[(c)]  Suppose that $c_{1}\in {\mathbb C}\setminus \{0\},$ $c_{2}\in {\mathbb C}\setminus \{0\},$ and $F(\cdot ; \cdot)$ is (compactly) $({\mathrm R},{\mathcal B})$-multi-almost automorphic.
Then $F(c_{1}\cdot ; c_{2}\cdot)$ is (compactly) $({\mathrm R}_{c_{1}},{\mathcal B}_{c_{2}})$-multi-almost automorphic.
Furthermore, if $F(\cdot ; \cdot)$ is ${\mathbb D}$-asymptotically (compactly) $({\mathrm R}_{c_{1}},{\mathcal B})$-multi-almost automorphic, then 
$F(c_{1}\cdot ; c_{2}\cdot)$ is ${\mathbb D}/c_{1}$-asymptotically (compactly) $({\mathrm R}_{c_{1}},{\mathcal B}_{c_{2}})$-multi-almost automorphic.
\end{itemize}
\item[(iv)] Suppose that $\alpha,\ \beta \in {\mathbb C}$ and, for every sequence which belongs to ${\mathrm R},$ we have that any its subsequence belongs to ${\mathrm R}.$ If $F(\cdot ; \cdot)$ and $G(\cdot ; \cdot)$ are (compactly) $({\mathrm R},{\mathcal B})$-multi-almost automorphic, then $\alpha F(\cdot ; \cdot)+\beta G(\cdot ; \cdot)$ is also (compactly) $({\mathrm R},{\mathcal B})$-multi-almost automorphic. The same holds for ${\mathbb D}$-asymptotically (compactly) $({\mathrm R},{\mathcal B})$-multi-almost automorphic functions.
\item[(v)] If $X\in {\mathcal B}$ and $F(\cdot ;\cdot)$ is 
asymptotically $({\mathrm R},{\mathcal B})$-multi-almost automorphic, then $F(\cdot ; \cdot)$ is bounded in case \emph{[L3]}.
\end{itemize}
\end{prop}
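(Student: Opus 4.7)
My plan is to handle each of the five parts by direct unpacking of the definitions, in the spirit of the classical scalar-valued almost automorphic arguments (see \cite{gaston}). Parts (i)--(iii) are essentially bookkeeping exercises; (iv) requires a diagonal extraction; and (v) is the main substantive point, exploiting the fact that $\mathrm{R}$ contains \emph{every} sequence in case [L3].

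For (i)--(iii), given a sequence drawn from the transformed collection $\mathrm{R}_c$ or the transformed family $\mathcal{B}_{x_0}$, I re-express it in terms of a sequence in $\mathrm{R}$ (respectively a base set $B \in \mathcal{B}$) to which the almost automorphy of $F$ applies, and then identify the natural transformed limit function. In (i) the limit along $({\bf b}_{k_l})$ is $cF^{\ast}$; in (ii)(a) it is $F^{\ast}(\cdot + \tau; \cdot)$; in (ii)(b) the substitution $x \mapsto x + x_0$ sends $-x_0 + B$ to $B$, so almost automorphy of $F$ at $B$ yields $F^{\ast}(\cdot; \cdot + x_0)$; in (iii)(a) the identity $F(c({\bf t} + c^{-1}{\bf b}_{k_l}); x) = F(c{\bf t} + {\bf b}_{k_l}; x) \to F^{\ast}(c{\bf t}; x)$ gives $F^{\ast}(c\cdot; \cdot)$. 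Parts (ii)(c), (iii)(b), (iii)(c) follow by combining the one-variable cases. Compact versions are immediate because all extracted convergences are already uniform on compacta. For the asymptotic versions, decompose $F = G + Q$ and verify that each transformation sends $Q$ into the appropriate $C_{0,\bullet,\bullet}$ class: $t$-translation by $\tau$ shifts ${\mathbb D}$ to ${\mathbb D} - \tau$, $t$-scaling by $c$ contracts it to ${\mathbb D}/c$, and $x$-substitutions change $\mathcal{B}$ as prescribed, all preserving the uniform vanishing at infinity.

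For (iv), given $({\bf b}_k) \in \mathrm{R}$ and $B \in \mathcal{B}$, I first apply the $(\mathrm{R},\mathcal{B})$-multi-almost automorphy of $F$ to extract a subsequence $({\bf b}_{k_l})$ with limit $F^{\ast}$. The hypothesis that every subsequence of a sequence in $\mathrm{R}$ again lies in $\mathrm{R}$ is exactly what allows me to apply the same property to $G$ on $({\bf b}_{k_l})$, extracting a sub-subsequence $({\bf b}_{k_{l_m}})$ with limit $G^{\ast}$. Along $({\bf b}_{k_{l_m}})$ both \eqref{love12345678}--\eqref{love123456789} hold, so $\alpha F + \beta G$ admits $\alpha F^{\ast} + \beta G^{\ast}$ as its limit. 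The compact version is preserved by the diagonal extraction, and the asymptotic case follows from the linearity of $C_{0,{\mathbb D},\mathcal{B}}$.

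Part (v) is the main technical step. In case [L3], every sequence in ${\mathbb R}^n$ belongs to $\mathrm{R}$. Decompose $F = G + Q$ with $G$ being $(\mathrm{R},\mathcal{B})$-multi-almost automorphic and $Q \in C_{0,{\mathbb R}^n,\mathcal{B}}$. Fix $x \in X$; the hypothesis $X \in \mathcal{B}$ lets me take $B := X$ as an admissible set containing $x$. If $G(\cdot; x)$ were unbounded, there would exist $({\bf t}_k) \subset {\mathbb R}^n$ with $\|G({\bf t}_k; x)\|_Y \to \infty$; by [L3] this sequence lies in $\mathrm{R}$, so Definition \ref{eovako} produces a subsequence $({\bf t}_{k_l})$ and a function $G^{\ast}$ with $G({\bf s} + {\bf t}_{k_l}; x) \to G^{\ast}({\bf s}; x)$ pointwise, and evaluating at ${\bf s} = 0$ contradicts the divergence. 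Hence $G(\cdot; x)$ is bounded, and combining with boundedness of $Q(\cdot; x)$ (from the uniform vanishing at infinity on $B = X$ together with continuity) gives the claim. The point I expect to need most care is reconciling pointwise-in-$x$ boundedness with any stronger uniform-in-$x$ reading the paper may intend, which would require propagating the extraction in $x$ consistently; the hypothesis $X \in \mathcal{B}$ is precisely what permits the argument at each fixed $x$.
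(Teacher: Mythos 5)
Your proof is correct and follows exactly the standard route the paper itself invokes: the paper offers no proof of this proposition beyond the remark that it "can be given as for the usually considered almost automorphic functions (\cite{gaston})", and your unpacking of (i)--(iii), the successive-subsequence extraction for (iv) (where the closure of ${\mathrm R}$ under subsequences is used exactly as you say), and the [L3] contradiction argument for (v) are the intended arguments. Your caution about (v) is well placed: since the limits in Definition \ref{eovako} are only pointwise in $x$, boundedness can only mean boundedness in ${\bf t}$ for each fixed $x$ (the example $F({\bf t};x)=\|x\|\,y_{0}$ shows a uniform-in-$x$ reading would be false), and that is precisely what your argument delivers.
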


Using Proposition \ref{kontinuitet}(iv) and the supremum formula
clarified in Proposition \ref{superste} (see also the estimate \eqref{lozoh}), we can simply deduce that the decomposition in Definition \ref{braindamage12345} is unique:

\begin{prop}\label{okjerade}
Suppose that there exist a function $G_{i}(\cdot ;\cdot)$ which is $({\mathrm R},{\mathcal B})$-multi-almost automorphic and a function
$Q_{i}\in C_{0,{\mathbb R}^{n},{\mathcal B}}({\mathbb R}^{n}\times X :Y)$ such that
$F({\bf t} ; x)=G_{i}({\bf t} ; x)+Q_{i}({\bf t} ; x)$ for all ${\bf t}\in {\mathbb R}^{n}$ and $x\in X$ ($i=1,2$).
Then we have $G_{1}\equiv G_{2}$ and $Q_{1}\equiv Q_{2},$ provided that the collection ${\mathrm R}$ satisfies the following two conditions:
\begin{itemize}
\item[D1.] There exists a sequence in ${\mathrm R}$ whose any subsequence is unbounded.
\item[D2.] For every sequence which belongs to ${\mathrm R},$ we have that any its subsequence belongs to ${\mathrm R}.$
\end{itemize}
\end{prop}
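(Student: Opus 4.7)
The approach is to reduce the statement to showing that any function $H : \mathbb{R}^n \times X \to Y$ which is simultaneously $(\mathrm{R}, \mathcal{B})$-multi-almost automorphic and belongs to $C_{0, \mathbb{R}^n, \mathcal{B}}(\mathbb{R}^n \times X : Y)$ must vanish on $\mathbb{R}^n \times B$ for every $B \in \mathcal{B}$. Starting from $G_1 + Q_1 = G_2 + Q_2$, I set $H := G_1 - G_2 = Q_2 - Q_1$. Condition D2 is precisely the standing hypothesis of Proposition \ref{kontinuitet}(iv), so $H$, being the difference of two $(\mathrm{R}, \mathcal{B})$-multi-almost automorphic functions, is itself $(\mathrm{R}, \mathcal{B})$-multi-almost automorphic. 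On the other side of the equation, $H = Q_2 - Q_1$ trivially lies in $C_{0, \mathbb{R}^n, \mathcal{B}}(\mathbb{R}^n \times X : Y)$, because the defining property (vanishing as $|\mathbf{t}| \to \infty$, uniformly for $x$ in each $B \in \mathcal{B}$) is obviously stable under subtraction.

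The core step is to apply the supremum formula from Proposition \ref{superste}, which is legitimate exactly because condition D1 supplies a sequence in $\mathrm{R}$ whose every subsequence is unbounded. Fix $B \in \mathcal{B}$, $x \in B$ and $\mathbf{t}_0 \in \mathbb{R}^n$. Rerunning the argument that produced estimate \eqref{lozoh}, but with $F$ replaced by $H$, yields for every $\epsilon > 0$ and every $a \geq 0$
\begin{equation*}
\bigl\| H(\mathbf{t}_0 ; x) \bigr\|_Y \leq \epsilon + \sup_{\mathbf{t} \in \mathbb{R}^n,\, |\mathbf{t}| \geq a} \bigl\| H(\mathbf{t} ; x) \bigr\|_Y .
\end{equation*}
Since $H \in C_{0, \mathbb{R}^n, \mathcal{B}}(\mathbb{R}^n \times X : Y)$, the vanishing of $H(\cdot; x)$ at infinity (uniform for $x \in B$) lets me choose $a = a(\epsilon)$ large enough that the supremum on the right is also below $\epsilon$. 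Hence $\| H(\mathbf{t}_0 ; x) \|_Y \leq 2\epsilon$, and letting $\epsilon \downarrow 0$ forces $H(\mathbf{t}_0; x) = 0$. Since $\mathbf{t}_0$, $x \in B$, and $B \in \mathcal{B}$ are arbitrary, $G_1 \equiv G_2$ on $\mathbb{R}^n \times \bigcup_{B \in \mathcal{B}} B$, and then $Q_1 \equiv Q_2$ follows immediately from $F - G_1 = F - G_2$.

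No individual step is hard in itself; the main conceptual observation is recognizing that D1 and D2 are exactly the two ingredients needed to invoke the auxiliary results in the form already proved, namely D2 to keep $G_1 - G_2$ inside the class of $(\mathrm{R}, \mathcal{B})$-multi-almost automorphic functions via Proposition \ref{kontinuitet}(iv), and D1 to have the supremum formula of Proposition \ref{superste} at one's disposal. Once these tools apply, the argument is the natural analogue of the classical ``almost automorphic part plus $C_0$ part'' uniqueness proof.
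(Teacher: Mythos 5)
Your proposal is correct and follows essentially the same route the paper indicates: the paper derives this proposition precisely by combining Proposition \ref{kontinuitet}(iv) (which needs [D2.] to conclude that $G_{1}-G_{2}$ is $({\mathrm R},{\mathcal B})$-multi-almost automorphic) with the supremum formula of Proposition \ref{superste} and the estimate \eqref{lozoh} (which need [D1.]), exactly as you do. Your added remark that the equality is obtained on ${\mathbb R}^{n}\times \bigcup_{B\in {\mathcal B}}B$ is a fair and slightly more careful reading of what the argument actually yields.
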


Furthermore, arguing as in the proof of \cite[Lemma 4.28]{diagana}, we may deduce the following:

\begin{lem}\label{obazac}
Suppose that there exist an $({\mathrm R},{\mathcal B})$-multi-almost automorphic function $G(\cdot ;\cdot)$ and a function
$Q\in C_{0,{\mathbb R}^{n},{\mathcal B}}({\mathbb R}^{n}\times X :Y)$ such that
$F({\bf t} ; x)=G({\bf t} ; x)+Q({\bf t} ; x)$ for all ${\bf t}\in {\mathbb R}^{n}$ and $x\in X.$ 
%Set
%$$
%X_{\mathcal B}:=\bigl\{x\in X \ ; \ (\exists B\in {\mathcal B}) \ x\in B  \bigr\}.
%$$
Then we have 
$$
\overline{\bigl\{G({\bf t};x) : {\bf t}\in {\mathbb R}^{n},\ x\in X \bigr\}} \subseteq  \overline{\bigl\{F({\bf t};x) : {\bf t}\in {\mathbb R}^{n},\ x\in X \bigr\}},
$$
provided that condition \emph{[D1.]} holds.
\end{lem}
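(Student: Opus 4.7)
The plan is to adapt the classical asymptotic-decomposition argument (cf.\ \cite[Lemma 4.28]{diagana}) to the $({\mathrm R},{\mathcal B})$-multi-dimensional setting, by translating the base point along an unbounded sequence drawn from ${\mathrm R}$ so as to wash out the $Q$-component while, thanks to the almost automorphy of $G$, preserving (up to a small error) the value $G({\bf t}_{0};x)$. Concretely, it is enough to show that for each $({\bf t}_{0},x) \in {\mathbb R}^{n} \times X$ with $x\in B$ for some $B\in {\mathcal B}$ and each $\epsilon>0$, there exists ${\bf s}_{\epsilon}\in {\mathbb R}^{n}$ with $\|F({\bf s}_{\epsilon};x)-G({\bf t}_{0};x)\|_{Y}<\epsilon$; the stated inclusion of closures then follows immediately.

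To carry this out, I would first invoke hypothesis \emph{[D1.]} to choose a sequence $({\bf b}_{k})\in {\mathrm R}$ every subsequence of which is unbounded; equivalently, $|{\bf b}_{k}|\to+\infty$. The $({\mathrm R},{\mathcal B})$-multi-almost automorphy of $G$, in the iterated-limit form of Proposition \ref{eovako54321}, supplies a subsequence $({\bf b}_{k_{l}})$ such that
$$\lim_{l\to +\infty}\lim_{m\to +\infty} G\bigl({\bf t}_{0}-{\bf b}_{k_{l}}+{\bf b}_{k_{m}};x\bigr) = G({\bf t}_{0};x).$$
I would then select first $l_{0}\in {\mathbb N}$, and afterwards $m_{0}=m_{0}(l_{0})\in {\mathbb N}$ large enough so that
$$\bigl\|G\bigl({\bf t}_{0}-{\bf b}_{k_{l_{0}}}+{\bf b}_{k_{m_{0}}};x\bigr)-G({\bf t}_{0};x)\bigr\|_{Y}<\epsilon/2.$$

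With $l_{0}$ now frozen, set ${\bf s}_{m}:={\bf t}_{0}-{\bf b}_{k_{l_{0}}}+{\bf b}_{k_{m}}$. Since ${\bf t}_{0}-{\bf b}_{k_{l_{0}}}$ is a fixed vector and $|{\bf b}_{k_{m}}|\to+\infty$, we have $|{\bf s}_{m}|\to+\infty$ as $m\to\infty$. Because $Q\in C_{0,{\mathbb R}^{n},{\mathcal B}}({\mathbb R}^{n}\times X : Y)$ and $x\in B$, this forces $\|Q({\bf s}_{m};x)\|_{Y}\to 0$, so upon enlarging $m_{0}$ if necessary we may also impose $\|Q({\bf s}_{m_{0}};x)\|_{Y}<\epsilon/2$. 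The decomposition $F=G+Q$ combined with the triangle inequality then yields
$$\bigl\|F({\bf s}_{m_{0}};x)-G({\bf t}_{0};x)\bigr\|_{Y}\leq \bigl\|G({\bf s}_{m_{0}};x)-G({\bf t}_{0};x)\bigr\|_{Y}+\|Q({\bf s}_{m_{0}};x)\|_{Y}<\epsilon,$$
placing $G({\bf t}_{0};x)$ in the closure of $\{F({\bf t};y):{\bf t}\in{\mathbb R}^{n},\,y\in X\}$, as required.

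The principal obstacle is the correct coordination of the two parameters $l$ and $m$: $l_{0}$ must be frozen first in order to invoke the iterated-limit characterization and secure the almost-automorphic approximation of $G({\bf t}_{0};x)$, while $m$ must still be free to run off to infinity in order to annihilate the $Q$-component. Hypothesis \emph{[D1.]} is precisely what accommodates both requirements, since it guarantees that the sequence $({\bf b}_{k_{m}})$ remains unbounded even after $l_{0}$ has been fixed, so that $|{\bf s}_{m}|\to+\infty$ and the asymptotic vanishing of $Q$ can be exploited.
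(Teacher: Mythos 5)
Your proof is correct and takes essentially the same route the paper intends: the paper offers no written proof of Lemma \ref{obazac}, merely invoking the argument of \cite[Lemma 4.28]{diagana}, and your write-up is precisely that classical translation argument transplanted to the $({\mathrm R},{\mathcal B})$-setting (use [D1.] to produce a sequence with $|{\bf b}_{k}|\to+\infty$, the iterated-limit form of Proposition \ref{eovako54321} to make $G({\bf s}_{m};x)$ approximate $G({\bf t}_{0};x)$ for \emph{all} large $m$, and the vanishing of $Q$ along the unbounded points ${\bf s}_{m}$ to pass from $G$ to $F$). Your restriction to $x$ lying in some $B\in {\mathcal B}$ is the same implicit convention the paper itself uses in the proof of Proposition \ref{superste}, so no gap arises there.
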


\subsection{Integrals and  derivatives of  $({\mathrm R},{\mathcal B})$-multi-almost automorphic functions}
Concerning the integral invariance of space consisting of all $({\mathrm R},{\mathcal B})$-multi-almost automorphic functions, we would like to state the following result:

\begin{prop}\label{convdiaggas}
Suppose that $h\in L^{1}({\mathbb R}^{n})$ as well as the function $F(\cdot ; \cdot)$ is bounded and $({\mathrm R},{\mathcal B})$-multi-almost automorphic. 
Then the function 
$$
(h\ast F)({\bf t};x):=\int_{{\mathbb R}^{n}}h(\sigma)F({\bf t}-\sigma;x)\, d\sigma,\quad {\bf t}\in {\mathbb R}^{n},\ x\in X
$$
is bounded and $({\mathrm R},{\mathcal B})$-multi-almost automorphic.  
\end{prop}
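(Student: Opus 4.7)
The plan is to reduce the claim to two applications of Lebesgue's dominated convergence theorem, using the boundedness of $F$ to produce a uniform integrable majorant $M|h(\sigma)|$.

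First I would verify boundedness and continuity of $h\ast F$. Setting $M:=\sup_{{\bf t},x}\|F({\bf t};x)\|_{Y}<\infty$, one has $\|(h\ast F)({\bf t};x)\|_{Y}\leq M\|h\|_{L^{1}}$, so $h\ast F$ is bounded. For (joint) continuity, for any sequence $({\bf t}_{k},x_{k})\to ({\bf t},x)$ the integrand $h(\sigma)F({\bf t}_{k}-\sigma;x_{k})$ converges pointwise in $\sigma$ to $h(\sigma)F({\bf t}-\sigma;x)$ by continuity of $F$, and is dominated by $M|h(\sigma)|\in L^{1}({\mathbb R}^{n})$, so dominated convergence yields continuity of $h\ast F$.

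Next I would establish the almost automorphic property. Fix $B\in{\mathcal B}$ and a sequence $({\bf b}_{k})\in{\mathrm R}$. By $({\mathrm R},{\mathcal B})$-multi-almost automorphy of $F$, extract a subsequence $({\bf b}_{k_{l}})$ and a function $F^{\ast}:{\mathbb R}^{n}\times X\to Y$ so that
\begin{align*}
\lim_{l\to\infty}F({\bf t}+{\bf b}_{k_{l}};x)=F^{\ast}({\bf t};x),\qquad \lim_{l\to\infty}F^{\ast}({\bf t}-{\bf b}_{k_{l}};x)=F({\bf t};x),
\end{align*}
pointwise for every ${\bf t}\in{\mathbb R}^{n}$ and $x\in B$. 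The pointwise limit inherits the bound $\|F^{\ast}({\bf t};x)\|_{Y}\leq M$, and $F^{\ast}(\cdot;x)$ is measurable as a pointwise limit of continuous functions. Define
\[
(h\ast F)^{\ast}({\bf t};x):=\int_{{\mathbb R}^{n}}h(\sigma)F^{\ast}({\bf t}-\sigma;x)\,d\sigma,
\]
which is well defined and bounded by $M\|h\|_{L^{1}}$.

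Finally I would verify the two required limits. For the forward one,
\[
(h\ast F)({\bf t}+{\bf b}_{k_{l}};x)=\int_{{\mathbb R}^{n}}h(\sigma)F({\bf t}+{\bf b}_{k_{l}}-\sigma;x)\,d\sigma,
\]
and the integrand tends pointwise in $\sigma$ to $h(\sigma)F^{\ast}({\bf t}-\sigma;x)$ while being dominated by $M|h(\sigma)|\in L^{1}$; dominated convergence gives the limit $(h\ast F)^{\ast}({\bf t};x)$. The backward limit is handled identically, since again the integrand $h(\sigma)F^{\ast}({\bf t}-{\bf b}_{k_{l}}-\sigma;x)$ tends pointwise to $h(\sigma)F({\bf t}-\sigma;x)$ and is dominated by $M|h(\sigma)|$. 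Thus $h\ast F$ satisfies Definition \ref{eovako} with the same subsequence and the associated limit function $(h\ast F)^{\ast}$, completing the proof.

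The only real obstacle is the legitimacy of applying dominated convergence in the second step: this is resolved by the uniform bound $M$ on $F$ (and hence on $F^{\ast}$), which furnishes the integrable majorant $M|h|$ independently of $l$, ${\bf t}$ and $x\in B$; no hypothesis on uniformity of the almost automorphic limits is required.
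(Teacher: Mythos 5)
Your proof is correct and follows essentially the same route as the paper's: establish boundedness and continuity of $h\ast F$ via the majorant $M|h|$ and dominated convergence, note that $F^{\ast}$ is bounded and measurable, and take $(h\ast F)^{\ast}=h\ast F^{\ast}$ as the limit function, verifying both limits by dominated convergence (the paper outsources this last step to Diagana's Theorem 4.5, which you have simply written out).
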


\begin{proof}
Since $h\in L^{1}({\mathbb R}^{n})$ and the function $F(\cdot ; \cdot)$ is bounded $({\mathrm R},{\mathcal B})$-multi-almost automorphic, 
it is clear that the function $(h\ast F)(\cdot ;\cdot)$ is well defined and bounded as well as that
the limit function $F^{\ast}(\cdot;\cdot)$ from Definition \ref{eovako} 
is bounded and measurable. 
The continuity of function $(h\ast F)(\cdot ;\cdot)$ follows from the dominated convergence theorem. 
The remainder of proof can be deduced as in the proof of \cite[Theorem 4.5, pp. 113--115]{diagana}; let us only note that the limit function for
 $(h\ast F)(\cdot ;\cdot)$ will be $(h\ast F^{\ast})(\cdot ;\cdot)$. 
\end{proof}

\begin{prop}\label{2.1.10obazac}
Suppose that conditions \emph{[D1.]-[D2.]} hold and for each integer $j\in {\mathbb N}$ the function $F_{j}(\cdot ; \cdot)$ is asymptotically (compactly) $({\mathrm R},{\mathcal B})$-multi-almost automorphic. If
the sequence $(F_{j}(\cdot ;\cdot))$ converges uniformly to a function $F(\cdot ;\cdot)$, then the function $F(\cdot ;\cdot)$ is asymptotically (compactly) $({\mathrm R},{\mathcal B})$-multi-almost automorphic.
\end{prop}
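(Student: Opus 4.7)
The plan is to fix, for each $j \in {\mathbb N}$, a decomposition $F_{j} = G_{j} + Q_{j}$ with $G_{j}$ (compactly) $({\mathrm R},{\mathcal B})$-multi-almost automorphic and $Q_{j} \in C_{0,{\mathbb R}^{n},{\mathcal B}}({\mathbb R}^{n} \times X : Y)$, and then to show that both sequences $(G_{j})$ and $(Q_{j})$ are uniformly Cauchy on ${\mathbb R}^{n} \times B$ for every $B \in {\mathcal B}$. Denoting their uniform limits by $G$ and $Q$, Proposition \ref{2.1.10} applied to $(G_{j})$ will give that $G$ is (compactly) $({\mathrm R},{\mathcal B})$-multi-almost automorphic, a standard $\varepsilon/2$ argument will show $Q \in C_{0,{\mathbb R}^{n},{\mathcal B}}({\mathbb R}^{n} \times X : Y)$, and $F = G + Q$ will follow from the uniform convergence $F_{j} \to F$, producing the desired asymptotic decomposition.

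The crucial step, which I expect to be the main obstacle, is establishing that $(G_{j})$ is uniformly Cauchy on each ${\mathbb R}^{n} \times B$. Fix $\varepsilon > 0$ and $B \in {\mathcal B}$, and use the uniform convergence of $(F_{j})$ to pick $N \in {\mathbb N}$ with $\|F_{i}({\bf t};x) - F_{j}({\bf t};x)\|_{Y} < \varepsilon/3$ for all $i,j \geq N$, ${\bf t} \in {\mathbb R}^{n}$ and $x \in B$. Condition \emph{[D2.]} together with Proposition \ref{kontinuitet}(iv) guarantees that $G_{i} - G_{j}$ is itself $({\mathrm R},{\mathcal B})$-multi-almost automorphic, and condition \emph{[D1.]} together with the supremum formula (Proposition \ref{superste}, whose proof adapts with no change to give the sup equality over a fixed $B \in {\mathcal B}$, since the integers $l_{0}, m_{0}$ in that proof are chosen for a fixed $x$ and ${\bf t}$) yields, for any $a \geq 0$,
\begin{align*}
\sup_{{\bf t}\in {\mathbb R}^{n},\, x\in B} \|(G_{i} - G_{j})({\bf t};x)\|_{Y} \;=\; \sup_{|{\bf t}|\geq a,\, x\in B} \|(G_{i} - G_{j})({\bf t};x)\|_{Y}.
\end{align*}
From the triangle inequality $\|G_{i} - G_{j}\|_{Y} \leq \|F_{i} - F_{j}\|_{Y} + \|Q_{i}\|_{Y} + \|Q_{j}\|_{Y}$ and the fact that $Q_{i}, Q_{j}$ individually vanish at infinity uniformly on $B$, one may choose $a = a_{i,j}$ large enough that $\|Q_{i}({\bf t};x)\|_{Y} + \|Q_{j}({\bf t};x)\|_{Y} < 2\varepsilon/3$ whenever $|{\bf t}| \geq a_{i,j}$ and $x \in B$; the right-hand side of the displayed equality is then bounded by $\varepsilon$, yielding the uniform Cauchy property of $(G_{j})$ on ${\mathbb R}^{n} \times B$.

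It follows at once that $(Q_{j}) = (F_{j} - G_{j})$ is uniformly Cauchy on ${\mathbb R}^{n} \times B$, so both sequences converge uniformly on such sets to some continuous limits $G$ and $Q$ with $F = G + Q$. Proposition \ref{2.1.10} applied to $(G_{j})$ directly gives that $G$ is (compactly) $({\mathrm R},{\mathcal B})$-multi-almost automorphic. For the decay of $Q$: given $\varepsilon > 0$ and $B \in {\mathcal B}$, pick $j$ with $\|Q - Q_{j}\|_{Y} < \varepsilon/2$ uniformly on ${\mathbb R}^{n} \times B$, and then pick $R > 0$ with $\|Q_{j}({\bf t};x)\|_{Y} < \varepsilon/2$ for $|{\bf t}| \geq R$ and $x \in B$; the triangle inequality gives $\|Q({\bf t};x)\|_{Y} < \varepsilon$ on the same set, so $Q \in C_{0,{\mathbb R}^{n},{\mathcal B}}({\mathbb R}^{n} \times X : Y)$, completing the plan.
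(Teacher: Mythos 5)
Your proof is correct and follows essentially the same route as the paper's: both fix decompositions $F_{j}=G_{j}+Q_{j}$, show that $(G_{j})$ is uniformly Cauchy by exploiting the fact that the supremum of the almost automorphic difference $G_{i}-G_{j}$ is attained at infinity (you invoke Proposition \ref{superste} directly, the paper invokes Lemma \ref{obazac}, which encodes the same supremum estimate), and then conclude via Proposition \ref{2.1.10} together with the closedness of $C_{0,{\mathbb R}^{n},{\mathcal B}}({\mathbb R}^{n}\times X:Y)$ under uniform convergence. Your observation that the supremum formula localizes to a fixed $B\in{\mathcal B}$ because the estimate \eqref{lozoh} is pointwise in $x$ is a clarification the paper leaves implicit.
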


\begin{proof}
Due to Proposition \ref{okjerade}, 
we know that there exist a uniquely determined function $G(\cdot ;\cdot)$ which is $({\mathrm R},{\mathcal B})$-multi-almost automorphic and a uniquely determined function
$Q\in C_{0,{\mathbb R}^{n},{\mathcal B}}({\mathbb R}^{n}\times X :Y)$ such that
$F({\bf t} ; x)=G({\bf t} ; x)+Q({\bf t} ; x)$ for all ${\bf t}\in {\mathbb R}^{n}$ and $x\in X$. Furthermore, we have
$$
F_{j}({\bf t} ; x)-F_{m}({\bf t} ; x)=\bigl[ G_{j}({\bf t} ; x)-G_{m}({\bf t} ; x)\bigr]+\bigl[Q_{j}({\bf t} ; x)-Q_{m}({\bf t} ; x) \bigr],
$$
 for all ${\bf t}\in {\mathbb R}^{n},$ $x\in X$ and $j,\ m\in {\mathbb N}.$ Due to Proposition \ref{kontinuitet}(iv), we have that the function $F_{j}(\cdot ;\cdot)-F_{m}(\cdot ;\cdot)$ is asymptotically $({\mathrm R},{\mathcal B})$-multi-almost automorphic as well as that
the function $G_{j}(\cdot ;\cdot)-G_{m}(\cdot ;\cdot)$ is $({\mathrm R},{\mathcal B})$-multi-almost automorphic ($j,\ m\in {\mathbb N}$). Keeping in mind this fact, Lemma \ref{obazac} and the argumentation used in the proof of \cite[Theorem 4.29]{diagana}, we get that
\begin{align*}
3\sup_{{\bf t} \in {\mathbb R}^{n},x\in X}&\Bigl\|  F_{j}({\bf t} ; x)-F_{m}({\bf t} ; x) \Bigr\|_{Y}
\\& \geq \sup_{{\bf t} \in {\mathbb R}^{n},x\in X}\Bigl\|  G_{j}({\bf t} ; x)-G_{m}({\bf t} ; x) \Bigr\|_{Y}+\sup_{{\bf t} \in {\mathbb R}^{n},x\in X}\Bigl\|  Q_{j}({\bf t} ; x)-Q_{m}({\bf t} ; x) \Bigr\|_{Y},
\end{align*}
for any $j,\ m\in {\mathbb N}.$
This implies that the sequences $(G_{j}(\cdot;\cdot))$ and $(Q_{j}(\cdot;\cdot))$ converge uniformly to the functions $G(\cdot;\cdot)$ and $Q(\cdot;\cdot),$ respectively. Due to Proposition \ref{2.1.10}, we get that the function $G(\cdot;\cdot)$ is $({\mathrm R},{\mathcal B})$-multi-almost automorphic.
The final conclusion follows from the obvious equality $F=G+Q$ and the fact that $C_{0,{\mathbb R}^{n},{\mathcal B}}({\mathbb R}^{n} \times X : Y)$ is a Banach space.
\end{proof}

Concerning the partial derivatives of (asymptotically) $({\mathrm R},{\mathcal B})$-multi-almost automorphic functions, we would like to state the following result (by $(e_{1},e_{2},\cdot \cdot \cdot,e_{n})$ we denote the standard basis of ${\mathbb R}^{n}$):

\begin{prop}\label{aljaska12345}
\begin{itemize}
\item[(i)] Suppose that the function $F(\cdot; \cdot)$ is (compactly) $({\mathrm R},{\mathcal B})$-multi-almost automorphic, \emph{[D2.]} holds, the partial derivative 
$$
\frac{\partial F(\cdot ; \cdot)}{\partial t_{i}}:=\lim_{h\rightarrow 0}\frac{F(\cdot +he_{i};\cdot)-F(\cdot; \cdot)}{h},\quad {\bf t}\in {\mathbb R}^{n},\ x\in X
$$
exists and it is uniformly continuous on ${\mathcal B},$ i.e.,
\begin{align*}
(\forall B\in {\mathcal B})& \ (\forall \epsilon>0) \ (\exists \delta > 0) \ (\forall {\bf t'},\ {\bf t''} \in {\mathbb R}^{n})\ (\forall x\in B)
\\& \Biggl( \bigl| {\bf t'}-{\bf t''} \bigr|<\delta \Rightarrow \Bigl\| \frac{\partial F({\bf t'} ; x)}{\partial t_{i}}-\frac{\partial F({\bf t''} ; x)}{\partial t_{i}}\Bigr\|<\epsilon  \Biggr).
\end{align*}
Then the function $
\frac{\partial F(\cdot ; \cdot)}{\partial t_{i}}$ is  (compactly) $({\mathrm R},{\mathcal B})$-multi-almost automorphic.
\item[(ii)] Suppose that the function $F(\cdot; \cdot)$ is asymptotically (compactly) $({\mathrm R},{\mathcal B})$-multi-almost automorphic, \emph{[D1.]-[D2.]} hold, the partial derivative 
$
\frac{\partial F({\bf t} ; x)}{\partial t_{i}}
$
exists for all ${\bf t}\in {\mathbb R}^{n}$, $ x\in X$ and it is uniformly continuous on ${\mathcal B}.$
Then the function $
\frac{\partial F(\cdot ; \cdot)}{\partial t_{i}}$ is asymptotically (compactly) $({\mathrm R},{\mathcal B})$-multi-almost automorphic. 
\end{itemize}
\end{prop}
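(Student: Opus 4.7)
The plan is to express the partial derivative as a uniform limit of difference quotients and invoke the already-proved closure of $({\mathrm R},{\mathcal B})$-multi-almost automorphy under uniform limits. Fix $i\in \{1,2,\dots,n\}$ and a sequence $h_{k}\to 0$ in ${\mathbb R}\setminus \{0\}$, and set
\[
F_{k}({\bf t};x):=\frac{F({\bf t}+h_{k}e_{i};x)-F({\bf t};x)}{h_{k}},\qquad k\in {\mathbb N}.
\]
By Proposition \ref{kontinuitet}(ii)(a) each translate $F(\cdot +h_{k}e_{i};\cdot)$ is (compactly) $({\mathrm R},{\mathcal B})$-multi-almost automorphic, and then Proposition \ref{kontinuitet}(iv), which is precisely the place where hypothesis \emph{[D2.]} enters (stability of ${\mathrm R}$ under extraction of subsequences), ensures that each difference quotient $F_{k}(\cdot;\cdot)$ is itself (compactly) $({\mathrm R},{\mathcal B})$-multi-almost automorphic.

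Next I would show that $F_{k}\to \partial F/\partial t_{i}$ uniformly in ${\bf t}\in {\mathbb R}^{n}$, uniformly for $x\in B$, for every $B\in {\mathcal B}$. Using the integral form of the fundamental theorem of calculus,
\[
F_{k}({\bf t};x)-\frac{\partial F({\bf t};x)}{\partial t_{i}}=\frac{1}{h_{k}}\int_{0}^{h_{k}}\Bigl[\frac{\partial F({\bf t}+se_{i};x)}{\partial t_{i}}-\frac{\partial F({\bf t};x)}{\partial t_{i}}\Bigr]ds,
\]
the uniform continuity of $\partial F/\partial t_{i}$ on ${\mathcal B}$ gives, for every $\epsilon>0$ and $B\in {\mathcal B}$, a $\delta>0$ such that $|h_{k}|<\delta$ forces
\[
\sup_{{\bf t}\in {\mathbb R}^{n},\,x\in B}\Bigl\|F_{k}({\bf t};x)-\frac{\partial F({\bf t};x)}{\partial t_{i}}\Bigr\|_{Y}<\epsilon.
\]
As a uniform limit of continuous functions, $\partial F/\partial t_{i}$ is jointly continuous, and then Proposition \ref{2.1.10} applied to $(F_{k})$ immediately delivers that $\partial F/\partial t_{i}$ is (compactly) $({\mathrm R},{\mathcal B})$-multi-almost automorphic. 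This completes part (i).

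For part (ii), I would write $F=G+Q$ with $G$ (compactly) $({\mathrm R},{\mathcal B})$-multi-almost automorphic and $Q\in C_{0,{\mathbb R}^{n},{\mathcal B}}({\mathbb R}^{n}\times X :Y)$, and decompose each difference quotient correspondingly as $F_{k}=G_{k}+Q_{k}$. The summand $G_{k}$ is $({\mathrm R},{\mathcal B})$-multi-almost automorphic exactly by the argument above applied to $G$ in place of $F$ (which uses only Proposition \ref{kontinuitet}(ii)(a) and (iv), not any differentiability of $G$), while $Q_{k}\in C_{0,{\mathbb R}^{n},{\mathcal B}}({\mathbb R}^{n}\times X :Y)$ because that class is both translation-invariant and a vector space. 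Hence each $F_{k}$ is asymptotically (compactly) $({\mathrm R},{\mathcal B})$-multi-almost automorphic. The integral estimate above is unchanged and provides the uniform convergence $F_{k}\to \partial F/\partial t_{i}$, and Proposition \ref{2.1.10obazac} (precisely the reason both \emph{[D1.]} and \emph{[D2.]} appear in (ii)) concludes the proof.

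The only non-routine point I anticipate is matching the notion of ``uniform convergence'' used in Propositions \ref{2.1.10} and \ref{2.1.10obazac} with the uniformity in ${\bf t}\in {\mathbb R}^{n}$, $x\in B$ delivered by the integral estimate; the uniform continuity hypothesis on $\partial F/\partial t_{i}$ over ${\mathcal B}$ is tailored exactly for this. A subtler observation is that in part (ii) one never needs a separate partial derivative of $G$ or of $Q$: the argument only exploits that the difference quotient of an $({\mathrm R},{\mathcal B})$-multi-almost automorphic function is again of the same type (under \emph{[D2.]}) and that $C_{0,{\mathbb R}^{n},{\mathcal B}}$ is stable under translation and subtraction.
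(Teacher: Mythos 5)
Your proposal is correct and follows essentially the same route as the paper: form the difference quotients (the paper uses $F_{j}=j[F(\cdot+j^{-1}e_{i};\cdot)-F(\cdot;\cdot)]$, you use a general sequence $h_{k}\to 0$), note they are (compactly) $({\mathrm R},{\mathcal B})$-multi-almost automorphic via translation invariance and Proposition \ref{kontinuitet}(iv) under [D2.], derive uniform convergence to $\partial F/\partial t_{i}$ from the same integral identity and the uniform continuity hypothesis, and conclude with Proposition \ref{2.1.10} (resp.\ Proposition \ref{2.1.10obazac} for part (ii)). Your treatment of part (ii) merely makes explicit the decomposition $F_{k}=G_{k}+Q_{k}$ that the paper leaves implicit behind the phrase ``follows similarly.''
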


\begin{proof}
We will prove only (i) because (ii) follows similarly, by appealing to Proposition \ref{2.1.10obazac} instead of 
Proposition \ref{2.1.10}. The proof immediately follows from the fact that the sequence $(F_{j}(\cdot;\cdot)\equiv j[F(\cdot +j^{-1}e_{i}; \cdot)-F(\cdot;\cdot)])$ of (compactly) $({\mathrm R},{\mathcal B})$-multi-almost automorphic functions converges uniformly to the function $\frac{\partial F(\cdot ; \cdot)}{\partial t_{i}}$ as $j\rightarrow +\infty.$ This can be shown as in one-dimesional case, by observing that
$$
F_{j}(\cdot;\cdot)-\frac{\partial F(\cdot ; \cdot)}{\partial t_{i}}=j\int^{1/j}_{0}\Biggl[ \frac{\partial F(\cdot +se_{i}; \cdot)}{\partial t_{i}}-\frac{\partial F(\cdot ; \cdot)}{\partial t_{i}}\Biggr]\, ds.
$$
\end{proof}
\subsection{Composition theorems for $({\mathrm R},{\mathcal B})$-multi-almost automorphic type functions}\label{marekzero}

Suppose that $F : {\mathbb R}^{n} \times X \rightarrow Y$ and $G : {\mathbb R}^{m} \times Y \rightarrow Z$ are given functions, where $m\in {\mathbb N}.$ The main aim of this subsection is to analyze the $({\mathrm R},{\mathcal B})$-multi-almost automorphic properties of the following multi-dimensional Nemytskii operator $H : {\mathbb R}^{n}  \times {\mathbb R}^{m} \times X \rightarrow Z$, given by
$$
H({\bf t};{\bf s}; x):=G\bigl({\bf s} ; F({\bf t}; x)\bigr),\quad {\bf t} \in {\mathbb R}^{n},\ {\bf s} \in {\mathbb R}^{m},\ x\in X.
$$
%\textcolor{blue}{I think that it could be interesting for you; all results are given for the function $W,$ not for the function $H$}.

If $m=n,$ then we will also analyze the $({\mathrm R},{\mathcal B})$-multi-almost automorphic properties of the following multi-dimensional Nemytskii operator $W : {\mathbb R}^{n}  \times X \rightarrow Z,$ given by
$$
W({\bf t}; x):=G\bigl({\bf t} ; F({\bf t}; x)\bigr),\quad {\bf t} \in {\mathbb R}^{n},\ x\in X.
$$

We will first state the following generalization of \cite[Theorem 4.16]{diagana}; the proof is similar to the proof of the above-mentioned theorem but we will present it for the sake
of completeness:

\begin{thm}\label{eovakoonako}
Suppose that $F : {\mathbb R}^{n} \times X \rightarrow Y$ is $({\mathrm R},{\mathcal B})$-multi-almost automorphic and $G : {\mathbb R}^{n} \times X \rightarrow Y$ is $({\mathrm R}',{\mathcal B}')$-multi-almost automorphic,
where ${\mathrm R}'$ is a collection of all sequences $b : {\mathbb N} \rightarrow {\mathbb R}^{n}$ from ${\mathrm R}$ and all their subsequences, as well as
\begin{align}\label{ljuvenauto}
{\mathcal B}':=\Biggl\{\bigcup_{{\bf t}\in {\mathbb R}^{n}}F(t; B) : B\in {\mathcal B}\Biggr\}.
\end{align}
If there exists a finite constant $L>0$ such that
\begin{align}\label{zigzverinije}
\bigl\| G({\bf t} ;x) -G({\bf t} ;y)\bigr\|_{Z}\leq L\|x-y\|_{Y},\quad {\bf t}\in {\mathbb R}^{n},\ x,\ y\in Y,
\end{align}
then the function $W(\cdot; \cdot)$ is $({\mathrm R},{\mathcal B})$-multi-almost automorphic.
\end{thm}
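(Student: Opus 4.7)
Here is my plan for proving Theorem~\ref{eovakoonako}.

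\textbf{Setup and choice of subsequences.} Fix $B\in{\mathcal B}$ and a sequence $({\bf b}_{k})\in{\mathrm R}$. By the $({\mathrm R},{\mathcal B})$-multi-almost automorphy of $F$, extract a subsequence $({\bf b}_{k_{l}})$ and a function $F^{\ast}:{\mathbb R}^{n}\times X\to Y$ such that
\begin{align*}
\lim_{l\to\infty}F({\bf t}+{\bf b}_{k_{l}};x)=F^{\ast}({\bf t};x),\qquad \lim_{l\to\infty}F^{\ast}({\bf t}-{\bf b}_{k_{l}};x)=F({\bf t};x)
\end{align*}
pointwise for $x\in B$, ${\bf t}\in{\mathbb R}^{n}$. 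Since every subsequence of a sequence in ${\mathrm R}$ belongs to ${\mathrm R}'$, we have $({\bf b}_{k_{l}})\in{\mathrm R}'$. Now set $B':=\bigcup_{{\bf t}\in{\mathbb R}^{n}}F({\bf t};B)\in{\mathcal B}'$; apply the $({\mathrm R}',{\mathcal B}')$-multi-almost automorphy of $G$ to extract a further subsequence (denoted the same) and a function $G^{\ast}:{\mathbb R}^{n}\times Y\to Z$ with $G({\bf t}+{\bf b}_{k_{l}};y)\to G^{\ast}({\bf t};y)$ and $G^{\ast}({\bf t}-{\bf b}_{k_{l}};y)\to G({\bf t};y)$ pointwise for $y\in B'$, ${\bf t}\in{\mathbb R}^{n}$. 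The natural candidate for the limit of $W$ is $W^{\ast}({\bf t};x):=G^{\ast}({\bf t};F^{\ast}({\bf t};x))$.

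\textbf{The two limit statements.} For the forward limit, apply a triangle inequality with the intermediate term $G({\bf t}+{\bf b}_{k_{l}};F^{\ast}({\bf t};x))$ and use the Lipschitz estimate \eqref{zigzverinije} on the first piece:
\begin{align*}
\|W({\bf t}+{\bf b}_{k_{l}};x)-W^{\ast}({\bf t};x)\|_{Z}
&\leq L\|F({\bf t}+{\bf b}_{k_{l}};x)-F^{\ast}({\bf t};x)\|_{Y}\\
&\quad +\|G({\bf t}+{\bf b}_{k_{l}};F^{\ast}({\bf t};x))-G^{\ast}({\bf t};F^{\ast}({\bf t};x))\|_{Z}.
\end{align*}
The first summand vanishes as $l\to\infty$ by the $F$-limit. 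The reverse limit $W^{\ast}({\bf t}-{\bf b}_{k_{l}};x)\to W({\bf t};x)$ is handled symmetrically, using the analogous Lipschitz bound for $G^{\ast}$ (obtained by passing to the limit in \eqref{zigzverinije}) and noting that the argument appearing in the $G^{\ast}\to G$ convergence is the fixed point $F({\bf t};x)\in B'$, which poses no difficulty.

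\textbf{Main obstacle: the vector $F^{\ast}({\bf t};x)$ may only lie in $\overline{B'}$, not in $B'$ itself.} Thus the pointwise convergence $G({\bf t}+{\bf b}_{k_{l}};\cdot)\to G^{\ast}({\bf t};\cdot)$ is not immediately applicable at $y=F^{\ast}({\bf t};x)$. I resolve this by a standard $3\varepsilon$-approximation. Given $\varepsilon>0$, choose $l'$ so large that $y_{0}:=F({\bf t}+{\bf b}_{k_{l'}};x)\in B'$ satisfies $\|F^{\ast}({\bf t};x)-y_{0}\|_{Y}<\varepsilon/(3L)$. Then
\begin{align*}
&\|G({\bf t}+{\bf b}_{k_{l}};F^{\ast}({\bf t};x))-G^{\ast}({\bf t};F^{\ast}({\bf t};x))\|_{Z}\\
&\quad\leq L\|F^{\ast}({\bf t};x)-y_{0}\|_{Y}+\|G({\bf t}+{\bf b}_{k_{l}};y_{0})-G^{\ast}({\bf t};y_{0})\|_{Z}+L\|y_{0}-F^{\ast}({\bf t};x)\|_{Y},
\end{align*}
where the Lipschitz constant for $G^{\ast}$ in the last term is justified by taking $l\to\infty$ in \eqref{zigzverinije}. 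The outer terms sum to less than $2\varepsilon/3$; the middle term tends to $0$ as $l\to\infty$ since $y_{0}\in B'$, so eventually it is less than $\varepsilon/3$.

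\textbf{Continuity.} Finally, $W(\cdot;\cdot)$ is continuous as the composition of continuous maps: $G$ is jointly continuous and $({\bf t},x)\mapsto({\bf t},F({\bf t};x))$ is continuous. Combining this with the two pointwise limits proves $W\in AA_{({\mathrm R},{\mathcal B})}({\mathbb R}^{n}\times X:Z)$, as required.
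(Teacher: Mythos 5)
Your proof follows the same skeleton as the paper's: extract a subsequence and $F^{\ast}$ for $F$, feed the resulting subsequence (which lies in ${\mathrm R}'$) and the set $B'=\bigcup_{{\bf t}}F({\bf t};B)\in{\mathcal B}'$ into the almost automorphy of $G$ to get $G^{\ast}$, take $W^{\ast}({\bf t};x)=G^{\ast}({\bf t};F^{\ast}({\bf t};x))$, and split each limit by the triangle inequality so that one piece is killed by the Lipschitz bound \eqref{zigzverinije} and the other by the defining limits for $G$. The genuine difference is your treatment of the point you correctly flag as the main obstacle: the paper simply asserts that $F^{\ast}({\bf t};x)\in B'$, whereas in fact $F^{\ast}({\bf t};x)$ is only a limit of elements of $B'$ and so need only lie in $\overline{B'}$; your $3\varepsilon$-approximation through a nearby $y_{0}=F({\bf t}+{\bf b}_{k_{l'}};x)\in B'$, combined with the Lipschitz estimate, repairs this and is a real improvement over the printed argument. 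One residual wrinkle remains in your version: the defining limits for $G^{\ast}$ are only guaranteed at arguments in $B'$, so when $F^{\ast}({\bf t};x)\notin B'$ the value $G^{\ast}({\bf t};F^{\ast}({\bf t};x))$ is a priori unconstrained, and "passing to the limit in \eqref{zigzverinije}" to get a Lipschitz bound for $G^{\ast}$ at that point presupposes that $G^{\ast}({\bf t};F^{\ast}({\bf t};x))=\lim_{l}G({\bf t}+{\bf b}_{k_{l}};F^{\ast}({\bf t};x))$, which is exactly what is in question. The clean fix, already implicit in your Cauchy-type estimate, is to define $W^{\ast}({\bf t};x):=\lim_{l}G({\bf t}+{\bf b}_{k_{l}};F^{\ast}({\bf t};x))$ directly (the limit exists because the sequence is uniformly Cauchy by your $3\varepsilon$ bound), i.e.\ to extend $G^{\ast}({\bf t};\cdot)$ continuously to $\overline{B'}$ before composing; with that one-line adjustment your argument is complete and strictly more careful than the paper's.
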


\begin{proof}
Let the set $B\in {\mathcal B}$ and the sequence $({\bf b}_{k}=(b_{k}^{1},b_{k}^{2},\cdot \cdot\cdot ,b_{k}^{n})) \in {\mathrm R}$ be given.
By definition, there exist a subsequence $({\bf b}_{k_{l}}=(b_{k_{l}}^{1},b_{k_{l}}^{2},\cdot \cdot\cdot , b_{k_{l}}^{n}))$ of $({\bf b}_{k})$ and a function
$F^{\ast} : {\mathbb R}^{n} \times X \rightarrow Y$ such that \eqref{love12345678}-\eqref{love123456789} hold true. Set $B':=\bigcup_{{\bf t}\in {\mathbb R}^{n}}F(t; B)$ and $b':=({\bf b}_{k_{l}}).$
Then there exist a subsequence $({\bf b}_{k_{l_{m}}}=(b_{k_{l_{m}}}^{1},b_{k_{l_{m}}}^{2},\cdot \cdot\cdot , b_{k_{l_{m}}}^{n}))$ of $({\bf b}_{k_{l}})$ and a function
$G^{\ast} : {\mathbb R}^{n} \times X \rightarrow Y$ such that
\begin{align}\label{loveeman}
\lim_{m\rightarrow +\infty}\Bigl\| G\bigl({\bf t} +(b_{k_{l_{m}}}^{1},\cdot \cdot\cdot, b_{k_{l_{m}}}^{n});y\bigr)-G^{\ast}({\bf t};y) \Bigr\|_{Z}=0
\end{align}
and
\begin{align}\label{loveema}
\lim_{m\rightarrow +\infty}\Bigl\| G^{\ast}\bigl({\bf t} -(b_{k_{l_{m}}}^{1},\cdot \cdot\cdot, b_{k_{l_{m}}}^{n});y\bigr)-G({\bf t};y) \Bigr\|_{Z}=0,
\end{align}
pointwise for all $y\in B'$ and ${\bf t}\in {\mathbb R}^{n}.$
It suffices to show that
\begin{align}\label{emanuel}
\lim_{m\rightarrow +\infty}\Bigl\| G\bigl({\bf t} +(b_{k_{l_{m}}}^{1},\cdot \cdot\cdot, b_{k_{l_{m}}}^{n});F\bigl({\bf t} +(b_{k_{l_{m}}}^{1},\cdot \cdot\cdot, b_{k_{l_{m}}}^{n});x\bigr)\bigr)-G^{\ast}({\bf t};F^{\ast}({\bf t};x)) \Bigr\|_{Z}=0
\end{align}
and
\begin{align}\label{loveem}
\lim_{m\rightarrow +\infty}\Bigl\| G^{\ast}\bigl({\bf t} -(b_{k_{l_{m}}}^{1},\cdot \cdot\cdot, b_{k_{l_{m}}}^{n});F^{\ast}\bigl({\bf t} -(b_{k_{l_{m}}}^{1},\cdot \cdot\cdot, b_{k_{l_{m}}}^{n});x\bigr)\bigr)-G({\bf t};F({\bf t};x)) \Bigr\|_{Z}=0.
\end{align}
We will prove only \eqref{emanuel} since the proof of \eqref{loveem} is quite analogous. For simplicity, denote ${\bf \tau_{m}}:=(b_{k_{l_{m}}}^{1},\cdot \cdot\cdot, b_{k_{l_{m}}}^{n})$ for all $m\in {\mathbb N}$. We have (${\bf t}\in {\mathbb R}^{n},$ $x\in B,$ $m\in {\mathbb N}$):
\begin{align*}
&\Bigl\| G\bigl({\bf t} +{\bf \tau_{m}};F\bigl({\bf t} +{\bf \tau_{m}};x\bigr)\bigr)-G^{\ast}({\bf t};F^{\ast}({\bf t};x)) \Bigr\|_{Z}
\\& \leq  \Bigl\| G\bigl({\bf t} +{\bf \tau_{m}};F\bigl({\bf t} +{\bf \tau_{m}};x\bigr)\bigr)-G^{\ast}({\bf t}+{\bf \tau_{m}};F^{\ast}({\bf t};x)) \Bigr\|_{Z}
\\& + \Bigl\| G^{\ast}({\bf t}+{\bf \tau_{m}};F^{\ast}({\bf t};x)) -G^{\ast}({\bf t};F^{\ast}({\bf t};x)) \Bigr\|_{Z}
\\& \leq L\Bigl\|F\bigl({\bf t} +{\bf \tau_{m}};x\bigr)-F^{\ast}({\bf t};x) \Bigr\|_{Y}+\Bigl\| G^{\ast}({\bf t}+{\bf \tau_{m}};F^{\ast}({\bf t};x)) -G^{\ast}({\bf t};F^{\ast}({\bf t};x)) \Bigr\|_{Z}.
\end{align*}
Since $x\in B$ and $F^{\ast}({\bf t};x)\in B'$ for all ${\bf t}\in {\mathbb R}^{n}$, \eqref{emanuel} follows by applying \eqref{love12345678} and \eqref{emanuel}, which completes the proof of the theorem.
\end{proof}

\begin{rem}\label{animamajka}
Suppose that $F : {\mathbb R}^{n} \times X \rightarrow Y$ is compactly $({\mathrm R},{\mathcal B})$-multi-almost automorphic, $G : {\mathbb R}^{n} \times X \rightarrow Y$ is compactly $({\mathrm R}',{\mathcal B}')$-multi-almost automorphic and additionally satisfies that the limit equations
\eqref{love12345678}-\eqref{love123456789} for $G(\cdot; \cdot)$ and $G^{\ast}(\cdot ;\cdot)$ hold uniformly on $K\times B',$ where $K$ is an arbitrary compact subset of ${\mathbb R}^{n}$ and $B'$ is an arbitrary set in ${\mathcal B}'.$ By the above proof, the function
 $W(\cdot; \cdot)$ is then compactly $({\mathrm R},{\mathcal B})$-multi-almost automorphic.
\end{rem}

A slight modification of the proof of Theorem \ref{eovakoonako} (cf. also the proof of \cite[Theorem 4.17]{diagana}) shows that the following result holds true:

\begin{thm}\label{eovako12345onako}
Suppose that $F : {\mathbb R}^{n} \times X \rightarrow Y$ is $({\mathrm R},{\mathcal B})$-multi-almost automorphic and $G : {\mathbb R}^{n} \times X \rightarrow Y$ is $({\mathrm R}',{\mathcal B}')$-multi-almost automorphic,
where ${\mathrm R}'$ is a collection of all sequences $b : {\mathbb N} \rightarrow {\mathbb R}^{n}$ from ${\mathrm R}$ and all their subsequences, as well as
$
{\mathcal B}'$ be given by \eqref{ljuvenauto}.
Set
$$
{\mathcal B}^{'*}:=\Biggl\{\bigcup_{{\bf t}\in {\mathbb R}^{n}}F^{*}(t; B) : B\in {\mathcal B}\Biggr\}.
$$
If 
\begin{align*}
& (\forall  B \in {\mathcal B}) \ (\forall \epsilon >0) \ (\exists \delta >0) 
\\ & \Bigl( x,\ y \in  {\mathcal B}' \cup {\mathcal B}^{'*}\mbox{ and }\ \bigl\| x-y\bigr\|_{Y} <\delta \Rightarrow \bigl\| G({\bf t};x)-G({\bf t};y)\bigr\|_{Z}<\epsilon,\ {\bf t} \in {\mathbb R}^{n}\Bigr),
\end{align*}
then the function $W(\cdot; \cdot)$ is $({\mathrm R},{\mathcal B})$-multi-almost automorphic.
\end{thm}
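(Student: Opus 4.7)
The plan is to follow the proof of Theorem \ref{eovakoonako} line by line, replacing the single application of the Lipschitz bound \eqref{zigzverinije} by the uniform-continuity hypothesis on $\mathcal{B}'\cup\mathcal{B}^{'*}$. Fix $B\in\mathcal{B}$ and a sequence $(\mathbf{b}_{k})\in\mathrm{R}$. First apply the $(\mathrm{R},\mathcal{B})$-multi-almost automorphy of $F$ to extract a subsequence $(\mathbf{b}_{k_{l}})$ and a limit $F^{\ast}$; then, noting that $(\mathbf{b}_{k_{l}})\in\mathrm{R}'$ and $B':=\bigcup_{\mathbf{t}}F(\mathbf{t};B)\in\mathcal{B}'$, apply the $(\mathrm{R}',\mathcal{B}')$-multi-almost automorphy of $G$ to extract a further subsequence $(\mathbf{b}_{k_{l_{m}}})$ and a limit $G^{\ast}$. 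Set $\tau_{m}:=(b_{k_{l_{m}}}^{1},\ldots,b_{k_{l_{m}}}^{n})$, write $B^{'*}:=\bigcup_{\mathbf{t}}F^{\ast}(\mathbf{t};B)\in\mathcal{B}^{'*}$, and take $W^{\ast}(\mathbf{t};x):=G^{\ast}(\mathbf{t};F^{\ast}(\mathbf{t};x))$ as the candidate limit function.

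The heart of the argument is the forward convergence
$$\lim_{m\to\infty}\bigl\|G(\mathbf{t}+\tau_{m};F(\mathbf{t}+\tau_{m};x))-G^{\ast}(\mathbf{t};F^{\ast}(\mathbf{t};x))\bigr\|_{Z}=0,$$
which I would handle by interposing $F^{\ast}(\mathbf{t};x)$ in the inner slot:
\begin{align*}
& \bigl\|G(\mathbf{t}+\tau_{m};F(\mathbf{t}+\tau_{m};x))-G^{\ast}(\mathbf{t};F^{\ast}(\mathbf{t};x))\bigr\|_{Z} \\
& \leq \bigl\|G(\mathbf{t}+\tau_{m};F(\mathbf{t}+\tau_{m};x))-G(\mathbf{t}+\tau_{m};F^{\ast}(\mathbf{t};x))\bigr\|_{Z} \\
& \quad + \bigl\|G(\mathbf{t}+\tau_{m};F^{\ast}(\mathbf{t};x))-G^{\ast}(\mathbf{t};F^{\ast}(\mathbf{t};x))\bigr\|_{Z}.
\end{align*}
Since $F(\mathbf{t}+\tau_{m};x)\in B'$ and $F^{\ast}(\mathbf{t};x)\in B^{'*}$, both inner arguments sit in $B'\cup B^{'*}$, and \eqref{love12345678} forces their difference to vanish; the uniform continuity (uniform in $\mathbf{t}$) then kills the first summand. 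The reverse convergence is dispatched symmetrically, using that $G^{\ast}(\mathbf{t};\cdot)$ inherits the uniform-continuity modulus of $G$ on $B'\cup B^{'*}$ by passing to the limit through \eqref{loveeman}.

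The main obstacle will be the second summand: the $(\mathrm{R}',\mathcal{B}')$-multi-almost automorphy of $G$ delivers pointwise convergence $G(\mathbf{t}+\tau_{m};y)\to G^{\ast}(\mathbf{t};y)$ only for $y\in B'$, whereas here $y=F^{\ast}(\mathbf{t};x)$ lives a priori in $B^{'*}$ only. I expect to remove this discrepancy by an $\varepsilon/3$-approximation: for fixed $\mathbf{t}$, $x$ and $\varepsilon>0$, choose $k$ so large that $\|F(\mathbf{t}+\tau_{k};x)-F^{\ast}(\mathbf{t};x)\|_{Y}$ falls below the modulus of uniform continuity supplied by the hypothesis on $B'\cup B^{'*}$, replace $F^{\ast}(\mathbf{t};x)$ by $F(\mathbf{t}+\tau_{k};x)\in B'$ inside both $G(\mathbf{t}+\tau_{m};\cdot)$ and $G^{\ast}(\mathbf{t};\cdot)$ at cost $\leq\varepsilon/3$ each, and then let $m\to\infty$ with $k$ held fixed to invoke the pointwise convergence at the point $F(\mathbf{t}+\tau_{k};x)\in B'$. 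This is precisely where the inclusion of both $\mathcal{B}'$ and $\mathcal{B}^{'*}$ in the uniform-continuity hypothesis pays off, and with this step in hand the remainder of the argument reduces to the routine bookkeeping already carried out in the proof of Theorem \ref{eovakoonako}.
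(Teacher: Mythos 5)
Your proposal follows the route the paper itself intends (the paper offers no written proof of this theorem, only the remark that a ``slight modification'' of the Lipschitz argument of Theorem \ref{eovakoonako} suffices), and in several respects you are more careful than the source: you correctly isolate the one place where the modification is not routine, namely that in the second summand the inner argument $F^{\ast}({\bf t};x)$ lies in $B^{'*}$ (in fact in $\overline{B'}$) rather than in $B'$, which is the only set on which the limit relation $G({\bf t}+\tau_{m};y)\to G^{\ast}({\bf t};y)$ is guaranteed. Your $\varepsilon/3$-approximation of $F^{\ast}({\bf t};x)$ by $F({\bf t}+\tau_{k};x)\in B'$ is exactly the right device for the portion of that term involving $G$.

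The one step that does not follow as written is the claim that $G^{\ast}({\bf t};\cdot)$ ``inherits the uniform-continuity modulus of $G$ on $B'\cup B^{'*}$ by passing to the limit through \eqref{loveeman}.'' The relation \eqref{loveeman} pins down $G^{\ast}$ only at points of $B'$; on $B^{'*}\setminus B'$ the limit function supplied by Definition \ref{eovako} is entirely arbitrary, so no modulus can be transferred there by taking limits, and the term $\bigl\|G^{\ast}({\bf t};F({\bf t}+\tau_{k};x))-G^{\ast}({\bf t};F^{\ast}({\bf t};x))\bigr\|_{Z}$ in your $\varepsilon/3$-split is a priori uncontrolled; the same objection applies to the ``symmetric'' treatment of the reverse limit, where $G^{\ast}$ is evaluated at $F^{\ast}({\bf t}-\tau_{m};x)\in B^{'*}$. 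Two standard repairs: (i) observe that $B^{'*}\subseteq\overline{B'}$ and that $G^{\ast}({\bf t};\cdot)$ is uniformly continuous on $B'$ with a modulus independent of ${\bf t}$ (this does follow by passing to the limit), hence may be redefined on $\overline{B'}\setminus B'$ as the unique uniformly continuous extension without disturbing \eqref{loveeman}--\eqref{loveema}; or (ii) bypass $G^{\ast}\circ F^{\ast}$ altogether and take as candidate limit $W^{\ast}({\bf t};x):=\lim_{k}G^{\ast}({\bf t};F({\bf t}+\tau_{k};x))$, which exists because $(F({\bf t}+\tau_{k};x))_{k}$ is Cauchy in $B'$ and $G^{\ast}$ has a uniform modulus there. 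With either repair your argument closes up. For what it is worth, the paper's own proof of the Lipschitz version glosses over the identical point by asserting $F^{\ast}({\bf t};x)\in B'$ without justification, so your write-up, once amended, is the more complete one.
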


Now we proceed with the analysis of composition theorems for asymptotically $({\mathrm R},{\mathcal B})$-multi-almost automorphic functions. Our first result corresponds to Theorem \ref{eovakoonako} and \cite[Theorem 4.34]{diagana}:

\begin{thm}\label{eovakoonako} 
Suppose that $F_{0} : {\mathbb R}^{n} \times X \rightarrow Y$ is $({\mathrm R},{\mathcal B})$-multi-almost automorphic,
$Q_{0}\in C_{0,{\mathbb R}^{n},{\mathcal B}}({\mathbb R}^{n} \times X : Y)$ and $F({\bf t} ; x)=F_{0}({\bf t} ; x)
+Q_{0}({\bf t} ; x)$ for all ${\bf t}\in {\mathbb R}^{n}$ and $x\in X.$ 
Suppose further that $G_{1} : {\mathbb R}^{n} \times X \rightarrow Y$ is $({\mathrm R}',{\mathcal B}')$-multi-almost automorphic,
where ${\mathrm R}'$ is a collection of all sequences $b : {\mathbb N} \rightarrow {\mathbb R}^{n}$ from ${\mathrm R}$ and all their subsequences as well as ${\mathcal B}'$ is defined by \eqref{ljuvenauto} with the function $F(\cdot;\cdot)$ replaced therein by the function
$F_{0}(\cdot;\cdot),$
$Q_{1}\in C_{0,{\mathbb R}^{n},{\mathcal B}_{1}}({\mathbb R}^{n} \times Y : Z),$
where
\begin{align}\label{objasni}
{\mathcal B}_{1}:=\Biggl\{\bigcup_{{\bf t}\in {\mathbb R}^{n}}F(t; B) : B\in {\mathcal B}\Biggr\},
\end{align}
and $G({\bf t} ; x)=G_{1}({\bf t} ; x)
+Q_{1}({\bf t} ; x)$ for all ${\bf t}\in {\mathbb R}^{n}$ and $x\in X.$
If there exists a finite constant $L>0$ such that the estimate \eqref{zigzverinije} holds with the function $G(\cdot;\cdot)$ replaced therein by the function
$G_{1}(\cdot;\cdot),$
then the function $W(\cdot; \cdot)$ is asymptotically $({\mathrm R},{\mathcal B})$-multi-almost automorphic.
\end{thm}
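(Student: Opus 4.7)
The plan is to mimic the classical decomposition strategy for asymptotic composition theorems. Given the hypotheses, the natural candidates for the decomposition $W=W_{0}+R$ are
\[
W_{0}({\bf t};x):=G_{1}\bigl({\bf t};F_{0}({\bf t};x)\bigr),\qquad R({\bf t};x):=W({\bf t};x)-W_{0}({\bf t};x),
\]
and I would split $R$ further as
\[
R({\bf t};x)=\bigl[G_{1}({\bf t};F({\bf t};x))-G_{1}({\bf t};F_{0}({\bf t};x))\bigr]+Q_{1}\bigl({\bf t};F({\bf t};x)\bigr).
\]
Once this decomposition is written down, there are essentially two things to verify: that $W_{0}$ is $({\mathrm R},{\mathcal B})$-multi-almost automorphic, and that $R\in C_{0,{\mathbb R}^{n},{\mathcal B}}({\mathbb R}^{n}\times X:Z)$.

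For the first, I would invoke the previous (non-asymptotic) Theorem~\ref{eovakoonako} applied to the pair $(F_{0},G_{1})$. The hypotheses match exactly: $F_{0}$ is $({\mathrm R},{\mathcal B})$-multi-almost automorphic by assumption, $G_{1}$ is $({\mathrm R}',{\mathcal B}')$-multi-almost automorphic with ${\mathcal B}'=\{\bigcup_{{\bf t}}F_{0}({\bf t};B):B\in{\mathcal B}\}$ (this is precisely how ${\mathcal B}'$ is defined in the statement, with $F$ replaced by $F_{0}$), and the Lipschitz estimate \eqref{zigzverinije} is assumed for $G_{1}$. Hence $W_{0}$ is $({\mathrm R},{\mathcal B})$-multi-almost automorphic.

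For the vanishing of $R$, I would treat the two summands separately. The first summand is controlled by the Lipschitz property:
\[
\bigl\|G_{1}({\bf t};F({\bf t};x))-G_{1}({\bf t};F_{0}({\bf t};x))\bigr\|_{Z}\leq L\,\bigl\|Q_{0}({\bf t};x)\bigr\|_{Y},
\]
and since $Q_{0}\in C_{0,{\mathbb R}^{n},{\mathcal B}}$, the right-hand side tends to $0$ as $|{\bf t}|\to+\infty$ uniformly for $x\in B$, for any $B\in{\mathcal B}$. For the second summand, fix $B\in{\mathcal B}$ and observe that by the definition \eqref{objasni} of ${\mathcal B}_{1}$ we have $F({\bf t};x)\in\bigcup_{{\bf s}}F({\bf s};B)\in{\mathcal B}_{1}$ whenever $x\in B$; hence $Q_{1}({\bf t};F({\bf t};x))\to 0$ as $|{\bf t}|\to+\infty$ uniformly in $x\in B$ by the assumption $Q_{1}\in C_{0,{\mathbb R}^{n},{\mathcal B}_{1}}$. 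Continuity of $R$ (and of $W_{0}$) is inherited from continuity of $F,F_{0},G_{1},Q_{1}$.

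The only genuinely delicate point is the bookkeeping: one must be careful that the collection ${\mathcal B}_{1}$ is defined in terms of $F$ (not $F_{0}$), so that the image $F({\bf t};x)$ of points $x\in B$ actually lies in an element of ${\mathcal B}_{1}$; otherwise the asymptotic decay of $Q_{1}({\bf t};F({\bf t};x))$ would be unavailable. This is exactly why the statement distinguishes ${\mathcal B}'$ (associated with $F_{0}$, needed for the multi-almost automorphic piece) from ${\mathcal B}_{1}$ (associated with the full $F$, needed for the $C_{0}$ piece). Once this point is observed, the rest is routine, and the conclusion $W=W_{0}+R$ with $W_{0}$ multi-almost automorphic and $R\in C_{0,{\mathbb R}^{n},{\mathcal B}}({\mathbb R}^{n}\times X:Z)$ is exactly the definition of asymptotic $({\mathrm R},{\mathcal B})$-multi-almost automorphy.
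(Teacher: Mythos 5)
Your proposal is correct and follows essentially the same route as the paper's own proof: the identical decomposition $W=G_{1}(\cdot;F_{0}(\cdot;\cdot))+\bigl[G_{1}(\cdot;F(\cdot;\cdot))-G_{1}(\cdot;F_{0}(\cdot;\cdot))\bigr]+Q_{1}(\cdot;F(\cdot;\cdot))$, the appeal to the non-asymptotic composition theorem for the first term, the Lipschitz bound $L\|Q_{0}({\bf t};x)\|_{Y}$ for the middle term, and the choice of ${\mathcal B}_{1}$ for the last term. Your added remark on why ${\mathcal B}'$ is built from $F_{0}$ while ${\mathcal B}_{1}$ is built from $F$ is a correct and useful clarification of a point the paper leaves implicit.
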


\begin{proof}
Using the above assumptions and Theorem \ref{eovakoonako}, we have that the function $({\bf t};x)\mapsto G_{1}({\bf t}; F_{0}({\bf t};x)),$ ${\bf t}\in {\mathbb R}^{n},$ $x\in X$ is $({\mathrm R},{\mathcal B})$-multi-almost automorphic.
Furthermore, we have the following decomposition
\begin{align*}
W({\bf t}; x)=G_{1}({\bf t}; F_{0}({\bf t};x))+\Bigl[ G_{1}({\bf t}; F({\bf t};x))-G_{1}({\bf t}; F_{0}({\bf t};x)) \Bigr]+Q_{1}({\bf t}; F({\bf t}; x)),
\end{align*}
for any ${\bf t}\in {\mathbb R}^{n}$ and $x\in X.$
Since 
$$
\Bigl\| G_{1}({\bf t}; F({\bf t};x))-G_{1}({\bf t}; F_{0}({\bf t};x)) \Bigr\|_{Z}\leq L \bigl\| Q_{0}({\bf t} ; x)\bigr\|_{Y},\quad  {\bf t}\in {\mathbb R}^{n},\ x\in X,
$$
we have that the function $({\bf t};x)\mapsto G_{1}({\bf t}; F({\bf t};x))-G_{1}({\bf t}; F_{0}({\bf t};x)),$ ${\bf t}\in {\mathbb R}^{n},$ $x\in X$ belongs to the space $C_{0,{\mathbb R}^{n},{\mathcal B}}({\mathbb R}^{n} \times X : Z).$ The same holds for the function 
$({\bf t};x)\mapsto Q_{1}({\bf t}; F({\bf t}; x)),$ ${\bf t}\in {\mathbb R}^{n},$ $x\in X$ because of our choice of the collection ${\mathcal B}_{1}$ in \eqref{objasni}. The proof of the theorem is thereby complete.
\end{proof}

Similarly we can prove the following result which corresponds to Theorem \ref{eovako12345onako} and \cite[Theorem 4.35]{diagana}:

\begin{thm}\label{eovakoonako}
Suppose that $F_{0} : {\mathbb R}^{n} \times X \rightarrow Y$ is $({\mathrm R},{\mathcal B})$-multi-almost automorphic,
$Q_{0}\in C_{0,{\mathbb R}^{n},{\mathcal B}}({\mathbb R}^{n} \times X : Y)$ and $F({\bf t} ; x)=F_{0}({\bf t} ; x)
+Q_{0}({\bf t} ; x)$ for all ${\bf t}\in {\mathbb R}^{n}$ and $x\in X.$ 
Suppose further that $G_{1} : {\mathbb R}^{n} \times X \rightarrow Y$ is $({\mathrm R}',{\mathcal B}')$-multi-almost automorphic,
where ${\mathrm R}'$ is a collection of all sequences $b : {\mathbb N} \rightarrow {\mathbb R}^{n}$ from ${\mathrm R}$ and all their subsequences as well as ${\mathcal B}'$ is defined by \eqref{ljuvenauto} with the function $F(\cdot;\cdot)$ replaced therein by the function
$F_{0}(\cdot;\cdot),$
$Q_{1}\in C_{0,{\mathbb R}^{n},{\mathcal B}_{1}}({\mathbb R}^{n} \times Y : Z),$
where
${\mathcal B}_{1}$ is given through \eqref{objasni},
and $G({\bf t} ; x)=G_{1}({\bf t} ; x)
+Q_{1}({\bf t} ; x)$ for all ${\bf t}\in {\mathbb R}^{n}$ and $x\in X.$
Set
$$
{\mathcal B}_{2}:=\Biggl\{\bigcup_{{\bf t}\in {\mathbb R}^{n}}F_{0}(t; B) : B\in {\mathcal B}\Biggr\} \cup \Biggl\{\bigcup_{{\bf t}\in {\mathbb R}^{n}}F_{0}^{\ast}(t; B) : B\in {\mathcal B}\Biggr\}.
$$
If 
\begin{align*}
& (\forall B \in {\mathcal B}) \ (\forall \epsilon >0) \ (\exists \delta >0) 
\\ & \Bigl( x,\ y \in  {\mathcal B}_{1} \cup {\mathcal B}_{2}\mbox{ and }\ \bigl\| x-y\bigr\|_{Y} <\delta \Rightarrow \bigl\| G_{1}({\bf t};x)-G_{1}({\bf t};y)\bigr\|_{Z}<\epsilon,\ {\bf t} \in {\mathbb R}^{n}\Bigr),
\end{align*}
then the function $W(\cdot; \cdot)$ is asymptotically $({\mathrm R},{\mathcal B})$-multi-almost automorphic.
\end{thm}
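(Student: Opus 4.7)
The plan is to mimic the proof of the preceding asymptotic composition theorem (the Lipschitz variant) and replace the Lipschitz step by a uniform-continuity argument, exactly as Theorem \ref{eovako12345onako} refines Theorem \ref{eovakoonako} in the non-asymptotic setting. First I would verify that the hypothesis of Theorem \ref{eovako12345onako} is met by the pair $(F_{0},G_{1})$: the uniform continuity assumed on $\mathcal{B}_{1}\cup\mathcal{B}_{2}$ trivially implies the condition required on $\mathcal{B}_{2}=\mathcal{B}'\cup\mathcal{B}'^{*}$ (in the notation of that theorem applied to $F_{0}$ and $F_{0}^{\ast}$), because $\mathcal{B}_{2}\subseteq\mathcal{B}_{1}\cup\mathcal{B}_{2}$. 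Applying Theorem \ref{eovako12345onako} then yields that the map $(\mathbf{t};x)\mapsto G_{1}(\mathbf{t};F_{0}(\mathbf{t};x))$ is $(\mathrm{R},\mathcal{B})$-multi-almost automorphic.

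Next I would use the algebraic decomposition
$$W(\mathbf{t};x)=G_{1}\bigl(\mathbf{t};F_{0}(\mathbf{t};x)\bigr)+\Bigl[G_{1}\bigl(\mathbf{t};F(\mathbf{t};x)\bigr)-G_{1}\bigl(\mathbf{t};F_{0}(\mathbf{t};x)\bigr)\Bigr]+Q_{1}\bigl(\mathbf{t};F(\mathbf{t};x)\bigr),$$
valid for all $\mathbf{t}\in\mathbb{R}^{n}$ and $x\in X$. The first summand has just been identified as $(\mathrm{R},\mathcal{B})$-multi-almost automorphic, so it remains to show that the bracketed middle term and the last term both lie in $C_{0,\mathbb{R}^{n},\mathcal{B}}(\mathbb{R}^{n}\times X:Z)$. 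The continuity of $W$ itself is automatic from continuity of $F$ and $G$.

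For the middle term, fix $B\in\mathcal{B}$ and $\epsilon>0$, and let $\delta>0$ be the modulus provided by the uniform-continuity hypothesis on $G_{1}$. Since $Q_{0}\in C_{0,\mathbb{R}^{n},\mathcal{B}}$, there exists $M>0$ such that $\|Q_{0}(\mathbf{t};x)\|_{Y}<\delta$ for all $|\mathbf{t}|>M$ and $x\in B$. Since $F_{0}(\mathbf{t};x)\in\mathcal{B}_{2}$ and $F(\mathbf{t};x)\in\mathcal{B}_{1}$, both points belong to $\mathcal{B}_{1}\cup\mathcal{B}_{2}$, and their $Y$-distance equals $\|Q_{0}(\mathbf{t};x)\|_{Y}<\delta$; hence the hypothesis forces $\|G_{1}(\mathbf{t};F(\mathbf{t};x))-G_{1}(\mathbf{t};F_{0}(\mathbf{t};x))\|_{Z}<\epsilon$ uniformly for $x\in B$, as needed. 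For the last term, the definition of $\mathcal{B}_{1}$ in \eqref{objasni} ensures that $B_{1}:=\bigcup_{\mathbf{t}\in\mathbb{R}^{n}}F(\mathbf{t};B)\in\mathcal{B}_{1}$; then $Q_{1}\in C_{0,\mathbb{R}^{n},\mathcal{B}_{1}}$ gives $\sup_{y\in B_{1}}\|Q_{1}(\mathbf{t};y)\|_{Z}\to 0$ as $|\mathbf{t}|\to\infty$, which majorizes $\sup_{x\in B}\|Q_{1}(\mathbf{t};F(\mathbf{t};x))\|_{Z}$ and yields the required decay.

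The main obstacle I expect is the careful bookkeeping of the collections $\mathcal{B}_{1}$ and $\mathcal{B}_{2}$: one must ensure that the ranges of $F$ and $F_{0}$ lie in a \emph{common} set on which $G_{1}(\mathbf{t};\cdot)$ is equicontinuous uniformly in $\mathbf{t}$, which is precisely why the hypothesis is imposed on $\mathcal{B}_{1}\cup\mathcal{B}_{2}$ rather than on either piece separately. Once this is in hand, the displayed decomposition exhibits $W$ as the sum of an $(\mathrm{R},\mathcal{B})$-multi-almost automorphic function and an element of $C_{0,\mathbb{R}^{n},\mathcal{B}}(\mathbb{R}^{n}\times X:Z)$, which is exactly the assertion that $W(\cdot;\cdot)$ is asymptotically $(\mathrm{R},\mathcal{B})$-multi-almost automorphic.
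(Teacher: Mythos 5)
Your proposal is correct and follows exactly the route the paper intends: the paper gives no separate proof for this statement, saying only that it is proved "similarly" to the preceding Lipschitz version, and your argument is precisely that adaptation — invoke Theorem \ref{eovako12345onako} for the pair $(F_{0},G_{1})$, use the same three-term decomposition of $W$, and replace the Lipschitz bound on the middle term by the uniform-continuity estimate on ${\mathcal B}_{1}\cup {\mathcal B}_{2}$ applied to $\|Q_{0}({\bf t};x)\|_{Y}<\delta$. The bookkeeping of ${\mathcal B}_{1}$ and ${\mathcal B}_{2}$ and the decay of the $Q_{1}$ term are handled as in the paper's proof of the previous theorem.
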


Composition theorems for (asymptotically) compactly $({\mathrm R},{\mathcal B})$-multi-almost automorphic functions will be considered somewhere else (see e.g., \cite[Lemma 4.36, Lemma 4.37, Lemma 4.38]{diagana} and \cite{sebbar} for the one-dimensional case).
\subsection{Convolution of $({\mathrm R},\mathcal{B})$-multi-almost automorphic functions}
For ${\bf t} = (t_1,t_2,\cdots , t_n)$, let us introduce the notation $\mathcal{I}_{{\bf t}}=(-\infty,t_1]\times (-\infty,t_2]\times \cdots \times (-\infty,t_n]$ and also impose the following condition:

\begin{itemize}
\item [ (E1)]  $\sup_{{\bf t}} \displaystyle\int_{\mathcal{I}_{\bf t}}K({\bf t -\eta})d \eta\, <\, +\infty\, .$
\end{itemize}

\begin{thm}\label{TI01} Let $f:\mathbb{R}^{n}\longrightarrow X$ be a ${\mathrm R}$-multi-almost automorphic function and $K$ is a kernel that satisfies the condition (E1). Then 
$$F({\bf t})=\int_{\mathcal{I}_{\bf t}}K({\bf t}-\eta)f(\eta)d\eta$$
yields a ${\mathrm R}$-multi-almost automorphic function.
\end{thm}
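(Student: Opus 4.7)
The plan is to follow the classical Bochner-style change-of-variables trick for convolutions of almost automorphic functions, lifted to the multidimensional setting via the product cone $\mathcal{I}_{\bf t}$. Given a sequence $({\bf b}_k)\in{\mathrm R}$, I would first invoke the ${\mathrm R}$-multi-almost automorphy of $f$ to extract a subsequence $({\bf b}_{k_l})$ together with a limit function $f^{\ast}:\mathbb{R}^n\to X$ such that $f({\bf t}+{\bf b}_{k_l})\to f^{\ast}({\bf t})$ and $f^{\ast}({\bf t}-{\bf b}_{k_l})\to f({\bf t})$ pointwise on $\mathbb{R}^n$. Since $f$ is assumed continuous and (as in the one-dimensional case) one can arrange $f$ to be bounded, the pointwise limit $f^{\ast}$ is also bounded by $M:=\|f\|_\infty$, so the candidate limit $F^{\ast}({\bf t}):=\int_{\mathcal{I}_{\bf t}}K({\bf t}-\sigma)f^{\ast}(\sigma)\,d\sigma$ is well-defined by (E1).

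The key computational step is the translation identity for the convolution on the shifted cone. Writing
\begin{align*}
F\bigl({\bf t}+{\bf b}_{k_l}\bigr)=\int_{\mathcal{I}_{{\bf t}+{\bf b}_{k_l}}}K\bigl({\bf t}+{\bf b}_{k_l}-\eta\bigr)f(\eta)\,d\eta
\end{align*}
and substituting $\eta=\sigma+{\bf b}_{k_l}$, the domain $\mathcal{I}_{{\bf t}+{\bf b}_{k_l}}$ is mapped bijectively onto $\mathcal{I}_{\bf t}$ (since each coordinate half-line shifts together), yielding
\begin{align*}
F\bigl({\bf t}+{\bf b}_{k_l}\bigr)=\int_{\mathcal{I}_{\bf t}}K({\bf t}-\sigma)\,f\bigl(\sigma+{\bf b}_{k_l}\bigr)\,d\sigma.
\end{align*}
At this point the proof reduces to a dominated convergence argument: the integrand converges pointwise to $K({\bf t}-\sigma)f^{\ast}(\sigma)$, and it is dominated by $M\,|K({\bf t}-\sigma)|$, which is integrable over $\mathcal{I}_{\bf t}$ thanks to (E1). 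Hence $F({\bf t}+{\bf b}_{k_l})\to F^{\ast}({\bf t})$ pointwise.

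The second limit $F^{\ast}({\bf t}-{\bf b}_{k_l})\to F({\bf t})$ is obtained by the exact same translation identity applied to $F^{\ast}$ together with $f^{\ast}(\sigma-{\bf b}_{k_l})\to f(\sigma)$ and the same dominating function. Continuity of $F$ (required so that $F$ fits Definition \ref{eovako}) also follows from dominated convergence applied to the defining integral, once one notes that the map ${\bf t}\mapsto\mathbf{1}_{\mathcal{I}_{\bf t}}(\sigma)K({\bf t}-\sigma)f(\sigma)$ is continuous in ${\bf t}$ off a set of measure zero on each slice and is uniformly dominated by $M|K|$.

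The principal obstacle is a measure-theoretic subtlety rather than an automorphy one: the limit $f^{\ast}$ is guaranteed to be bounded and pointwise-defined but not continuous, hence not a priori Borel measurable; one must check that $f^{\ast}$ is measurable so that $F^{\ast}$ makes sense. The standard remedy is to observe that, as a pointwise limit of a subsequence of the continuous maps $f(\cdot+{\bf b}_{k_l})$, the function $f^{\ast}$ is Borel measurable, and then to justify Fubini/dominated convergence on the moving domain $\mathcal{I}_{\bf t}$ by rewriting the integral against the fixed measure $K({\bf t}-\sigma)\mathbf{1}_{\mathcal{I}_{\bf t}}(\sigma)\,d\sigma$. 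Once this is in place, the whole argument is the Bochner translation trick, with (E1) playing the role of the $L^1$ hypothesis that was used in Proposition \ref{convdiaggas}.
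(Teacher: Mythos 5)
Your proposal is correct and follows essentially the same route as the paper's proof: the substitution $\eta=\sigma+{\bf b}_{k_l}$ carrying $\mathcal{I}_{{\bf t}+{\bf b}_{k_l}}$ onto $\mathcal{I}_{\bf t}$, followed by dominated convergence under (E1) for both limits, with continuity of $F$ also obtained from dominated convergence. The measurability of the limit function $f^{\ast}$ and the boundedness of $f$ that you flag are in fact used silently by the paper as well, so your version is, if anything, slightly more careful on those points.
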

\begin{proof}
First of all observe that, using condition (E1) and the Lebesgue's  dominated convergence Theorem we have that  $F$ is a continuous function of ${\bf t}$. On the other hand, since $f$ is ${\mathrm R}$-multi-AA, given a sequence $b_n\in \mathrm{R}$, there exist a subsequence $c_n$ of $b_n$ and a function $\tilde{f}$ such that the following pointwise limits holds:
$$\lim_{n\to \infty}f({\bf t}+c_n)=\tilde{f}({\bf t}),$$
and
$$\lim_{n\to \infty} \tilde{f}({\bf t}-c_n)=f({\bf t}).$$
Now, let us take the (well-defined) function:
$$F^*({\bf t}):=\int_{\mathcal{I}_{\bf t}}K({\bf t}-\eta)\tilde{f}(\eta)d \eta \, .$$
Then, we have
\begin{eqnarray*}
\|F({\bf t}+c_n)-F^*(t)\| &= & \Big{|} \int_{\mathcal{I}_{{\bf t}+c_n}}K({\bf t}+c_n -\eta) f(\eta)d \eta-\int_{\mathcal{I}_{\bf t}}K({\bf t}-\eta)\tilde{f}(\eta)d \eta \Big{|}\\
&\leq & \int_{\mathcal{I}_{\bf t}}|K({\bf t}-\eta)| \| f(\eta +c_n)+\tilde{f}(\eta)\| d \eta \, .
\end{eqnarray*}
From which, using condition (E1) and Lebesgue's dominated convergence Theorem, we have 
$$\lim_{n\to \infty}F({\bf t}+c_n)=F^*(\bf t);$$
similarly, we obtain
$$\lim_{n\to \infty}F^*({\bf t}-c_n)=F(\bf t).$$
\end{proof}

Let $\mathbb{D}$ be an unbounded subset of $\mathbb{R}^n$ and $K$ a kernel function defined on $\mathbb{R}^n$. For the invariance of the space of $\mathrm{R}$-multi-asymptotically almost automorphic functions under some particular integral operators, we need the following additional conditions:

\begin{itemize}
\item [ (E2)]  $\displaystyle\lim_{|{\bf t}|\to +\infty, {\bf t}\in \mathbb{D}}\int_{\mathcal{I}_{\bf t} \cap \mathbb{D}^c}
K({\bf t -\eta})d \eta \, =0 \, .$
\item [ (E3)] for every $r>0$ (big enough), it is satisfied that
 $$\displaystyle\lim_{|{\bf t}|\to +\infty, {\bf t}\in \mathbb{D}}\int_{\mathcal{I}_{\bf t} \cap \mathbb{D} \cap B(0,r)}K({\bf t -\eta})d \eta \, =0 \, .$$
\end{itemize}

%$B_{(x,y)}=(-\infty,x]\times (-\infty,y]$

\begin{thm}\label{Theorem existence linear VIO}
Assume that $(E1),\, (E2)$ and $(E3)$ be fulfilled. Let $\mathbb{D}_{{\bf t}}=\mathcal{I}_{{\bf t}}\cap \mathbb{D}$ be such that $int(\mathbb{D}_{{\bf t_0}})\not = \emptyset$ for some ${\bf t_0}\in \mathbb{R}^n$. Then, the integral
$$  \Gamma f({\bf t})=\int\limits_{\mathbb{D}_{{\bf t}}} K({\bf t}-\eta)f(\eta)d \eta \, ,$$
defines an operator $\Gamma$ from the space of $\mathbb{D}$- asymptotically ${\mathrm{R}}$-multi-almost automorphic functions to itself.
\end{thm}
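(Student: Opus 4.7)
The plan is to use the decomposition afforded by the definition of $\mathbb{D}$-asymptotic $\mathrm{R}$-multi-almost automorphy. Write $f = g + q$, where $g:\mathbb{R}^n \to X$ is $\mathrm{R}$-multi-almost automorphic and $q\in C_{0,\mathbb{D}}(\mathbb{R}^n:X)$. Then split
\[
\Gamma f({\bf t}) = \int_{\mathcal{I}_{\bf t}}K({\bf t}-\eta)g(\eta)\,d\eta \;-\; \int_{\mathcal{I}_{\bf t}\cap \mathbb{D}^{c}}K({\bf t}-\eta)g(\eta)\,d\eta \;+\; \int_{\mathbb{D}_{\bf t}}K({\bf t}-\eta)q(\eta)\,d\eta,
\]
and call these three integrals $G({\bf t})$, $R_{1}({\bf t})$, $R_{2}({\bf t})$ respectively. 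The strategy is to show $G$ is $\mathrm{R}$-multi-almost automorphic and that $R_1,R_2 \in C_{0,\mathbb{D}}(\mathbb{R}^n:X)$, so $\Gamma f = G - R_1 + R_2$ exhibits the required decomposition.

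The first term is immediate: by Theorem \ref{TI01} applied to the $\mathrm{R}$-multi-almost automorphic function $g$ and the kernel $K$ (which satisfies (E1)), $G$ is $\mathrm{R}$-multi-almost automorphic. For $R_1$, using the boundedness of $g$ (which follows from $g$ being $\mathrm{R}$-multi-almost automorphic together with a supremum-type argument; otherwise, one assumes it explicitly or works with the values $g({\bf t})$ that are attained on the closure of the range of $f$) we bound
\[
\|R_{1}({\bf t})\| \;\leq\; \|g\|_{\infty}\int_{\mathcal{I}_{\bf t}\cap \mathbb{D}^{c}}|K({\bf t}-\eta)|\,d\eta,
\]
which tends to $0$ as $|{\bf t}|\to\infty$ with ${\bf t}\in\mathbb{D}$ by hypothesis (E2). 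Hence $R_1\in C_{0,\mathbb{D}}(\mathbb{R}^n:X)$.

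The main obstacle is the third term $R_2$, because $q$ is only vanishing \emph{at infinity along $\mathbb{D}$} and $K$ may spread mass over a large region. I will handle it by the standard $\varepsilon/2$ split at radius $r$. Given $\varepsilon>0$, by (E1) choose $M=\sup_{\bf t}\int_{\mathcal{I}_{\bf t}}|K({\bf t}-\eta)|\,d\eta<\infty$, then pick $r>0$ so large that $\|q(\eta)\|\leq \varepsilon/(2M)$ whenever $\eta\in\mathbb{D}$ and $|\eta|\geq r$. Then
\[
\|R_{2}({\bf t})\|\;\leq\; \|q\|_{\infty}\!\!\int_{\mathbb{D}_{\bf t}\cap B(0,r)}\!\!|K({\bf t}-\eta)|\,d\eta \;+\; \frac{\varepsilon}{2M}\int_{\mathbb{D}_{\bf t}\setminus B(0,r)}\!\!|K({\bf t}-\eta)|\,d\eta,
\]
the second summand being $\leq\varepsilon/2$ by (E1), and the first summand being $<\varepsilon/2$ for $|{\bf t}|$ large enough with ${\bf t}\in\mathbb{D}$ by (E3). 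This proves $R_{2}\in C_{0,\mathbb{D}}(\mathbb{R}^n:X)$. Continuity of $\Gamma f$ follows from dominated convergence and (E1), exactly as in the proof of Theorem \ref{TI01}, which completes the argument.
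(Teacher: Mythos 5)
Your proposal is correct and follows essentially the same route as the paper's own proof: the identical decomposition $f=g+q$, the same three-way splitting of the integral over $\mathbb{D}_{\bf t}$, Theorem \ref{TI01} for the principal term, (E2) with boundedness of $g$ for the middle term, and the radius-$r$ split using (E1) and (E3) for the last term. The only (welcome) refinement is that you explicitly flag the boundedness of $g$ as a point needing justification, which the paper simply asserts.
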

\begin{proof}
%Since for some ${\bf t_0}\in \mathbb{R}^n$ we have $int(\mathbb{D}_{{\bf t_0}})\not = \emptyset$, then the function $\Gamma f({\bf t})$ is not the zero function, and so it is not trivial its study. 
Let $f=f_a+f_0$, be a $\mathbb{D}$-asymptotically ${\mathrm{R}}$-multi-almost automorphic function, where $f_a$ is ${\mathrm R}$-multi-almost automorphic and $f_0$ is in $C_{0,\mathbb{D}}(\mathbb{R}^n; Y)$, then
\begin{eqnarray*}
\int\limits_{\mathbb{D}_{{\bf t}}} K({\bf t}-\eta)f(\eta)d \eta &=& \int\limits_{\mathbb{D}_{{\bf t}}} K({\bf t}-\eta)\left( f_a(\eta)+f_0(\eta)\right) d \eta \\
&=&\int\limits_{\mathcal{I}_{\bf t}} K({\bf t}-\eta) f_a(\eta)d (\eta)-
\int\limits_{\mathcal{I}_{\bf t}\cap \mathbb{D}^c}K({\bf t}-\eta) f_a(\eta)d (\eta)+\\
&+&\int\limits_{\mathbb{D}_{\bf t}} K({\bf t}-\eta)f_0(\eta) d \eta \\
&=&\Gamma_1f_a({\bf t})+\Gamma_2f({\bf t})\, ,
\end{eqnarray*}
where 
$$\Gamma_1f_a({\bf t}):=\int\limits_{\mathcal{I}_{{\bf t}}} K({\bf t}-\eta) f_a(\eta)d (\eta)\,  ,$$
and
$$\Gamma_2f({\bf t}):= \int\limits_{\mathbb{D}_{\bf t}} K({\bf t}-\eta)f_0(\eta) d \eta -\int\limits_{\mathcal{I}_{{\bf t}}\cap \mathbb{D}^c}K({\bf t}-\eta) f_a(\eta)d (\eta)\, .$$
From Theorem \ref{TI01} we have that $\Gamma_1f_a$ is $\mathrm{R}$-multi-almost automorphic. Now let us prove the following decay at infinity across $\mathbb{D}$:
$$\lim_{{\bf t} \in \mathbb{D}: |{\bf t}|\to +\infty}\Gamma_2f({\bf t}) =0\, .$$
First note that since $||f_a||_{\infty}<\infty$ and condition (E2) hods, then:
$$\lim_{{\bf t} \in \mathbb{D}: |{\bf t}|\to +\infty}\int\limits_{\mathcal{I}_{\bf t}\cap \mathbb{D}^c}K({\bf t}-\eta) f_a(\eta)d (\eta) =0\; ,$$ 
therefore, the second integral in the representation of $\Gamma_2 f$ vanishes at infinity, it rest to work with the first integral. Let $\epsilon >0$, since $f_0 \in C_{0,\mathbb{D}}(\Omega;Y)$ there exists $r>0$ (big enough) such that for all ${\bf t} \in \mathbb{D}$ with $|{\bf t}| >r$ we have $|f_0({\bf t})| <\epsilon$, moreover because of condition (E3),  for this $r$ and for $|{\bf t}|$ big enough we have:
$$\int\limits_{B(0,r)\cap \mathbb{D}_{{\bf t}}} K({\bf t}-\eta) d \eta <\epsilon \, ;$$ 
therefore, since 
$$\int\limits_{\mathbb{D}_{{\bf t}}} K({\bf t}-\eta)f_0(\eta) d \eta =\int\limits_{B(0,r)\cap \mathbb{D}_{{\bf t}}} K({\bf t}-\eta)f_0(\eta) d \eta +\int\limits_{B(0,r)^c \cap \mathbb{D}_{{\bf t}}} K({\bf t}-\eta)f_0(\eta) d \eta \, ,$$
we conclude
$$\Big{ |}\int\limits_{\mathbb{D}_{{\bf t}}} K({\bf t}-\eta)f_0(\eta) d \eta  \Big{ |}< \left( ||f_0||_{\infty} + \int\limits_{B(0,r)^c \cap \mathbb{D}_{{\bf t}}}| K({\bf t}-\eta)| d \eta \right) \epsilon\, .$$

\end{proof}

\begin{example}
In $\mathbb{R}^2$, let us consider the set $\mathbb{D}$ formed by the union of lines passing throughout a fixed point $p\in \mathbb{R}^2$, then we have 
$$\int_{\mathbb{D}_{\bf t}}K({\bf t}-\eta)f(\eta)d\eta  =0\, ,$$
for every ${\bf t} \in \mathbb{R}^2$. More generally, if $\mathbb{D}$ consists of sets contained in euclidean spaces of dimension less than $n$,  and after the canonical embedding of it into $\mathbb{R}^n$, we have that
$$\int_{\mathbb{D}_{\bf t}}K({\bf t}-\eta)f(\eta)d\eta  =0\, ,$$
for every ${\bf t} \in \mathbb{R}^n$. This clarifies the necessity of the condition $int(\mathbb{D}_{\bf t_0})\not = \emptyset$  for some ${\bf t_0} \in  \mathbb{R}^n$ (in previous Theorem), in order to have no nontrivial functions in the integral representation of $\Gamma f$.
\end{example}

\begin{example}
In the special case in which $\mathbb{D}=[\alpha_1, +\infty)\times[\alpha_2, +\infty)\times \cdots \times [\alpha_n, +\infty)$ we have $\mathbb{D}_{{\bf t}}=[\alpha_1, t_1]\times[\alpha_2, t_2 ]\times \cdots \times [\alpha_n, t_n]:=[{\bf \alpha},{\bf t}]$ and under the hypothesis (E1), (E2) and (E3) and assuming the notation 
$$\int_{{\bf \alpha}}^{{\bf t}}:=\int_{\alpha_1}^{t_1} \cdots \int_{\alpha_n}^{t_n}\, ,$$
we have that the integral function
$$\Gamma f({\bf t})=\int_{{\bf \alpha}}^{{\bf t}} K({\bf t}-\eta)f(\eta) d \eta\, ,$$
is $\mathbb{D}$- asymptotically ${\mathrm{R}}$ - multi-almost automorphic function, provided that $f$ is.
\end{example}

\begin{example}
Let $\alpha, \beta$ be positive real numbers and consider the kernel function $K_e(x-s,y-t)=\exp(-\alpha(x-s)) \exp(-\beta(y-t))$ and ${\bf t}=(x,y)$. 

Nevertheless if $\mathbb{D}$ is the first  quadrant $[0,+\infty)\times [0,+\infty)$ and $f$ an asymptotically $\mathbb{D}$-multi-almost automorphic function, then
$$F({\bf t})=\iint\limits_{\mathbb{D}_{{\bf t}}} K_e(x-s,y-t) f(s,t)dsdt\, ,$$
is not  $\mathbb{D}$- asymptotically ${\mathrm{R}}$ - multi-almost automorphic. In fact, let us consider the asymptotically $\mathbb{D}$-multi-almost automorphic function $f({\bf t})=1+e^{-(\alpha s+ \beta t)}$ where ${\bf t}=(s,t)$; then
\begin{eqnarray*}
F({\bf t})&=&\iint\limits_{\mathbb{D}_{{\bf t}}} K_e(x-s,y-t) (1+e^{-(\alpha s+ \beta t)})dsdt\\
&=&\iint\limits_{\mathbb{D}_{{\bf t}}} K_e(x-s,y-t)dsdt +\iint\limits_{\mathbb{D}_{{\bf t}}}K_e(x-s,y-t) e^{-(\alpha s+ \beta t)}dsdt\\
&=& \iint\limits_{\mathcal{I}_{{\bf t}}} K_e(x-s,y-t)dsdt -\iint\limits_{\mathcal{I}_{{\bf t}}\cap \mathbb{D}^{c}} K_e(x-s,y-t)dsdt +\\
&+& \iint\limits_{\mathbb{D}_{{\bf t}}}K_e(x-s,y-t) e^{-(\alpha s+ \beta t)}dsdt\\
&=& F_1({\bf t}) + F_2({\bf t}).
\end{eqnarray*}
where, 
$$F_1({\bf t}):=\iint\limits_{\mathcal{I}_{{\bf t}}} K_e(x-s,y-t)dsdt\,  ,$$
and 
$$F_2({\bf t}):=  \iint\limits_{\mathbb{D}_{{\bf t}}}K_e(x-s,y-t) e^{-(\alpha s+ \beta t)}dsdt-\iint\limits_{\mathcal{I}_{{\bf t}}\cap \mathbb{D}^{c}} K_e(x-s,y-t)dsdt\, .$$
We see that $F_1$ is $\mathrm{R}$-multi-almost automorphic (note that $K_e$ satisfy condition (E1)). On the other hand $F_2$ is not $\mathrm{R}$-multi-almost automorphic and for $(x_0,y) \in \mathbb{D}$ with fixed $x_0 \in (0,+\infty)$, we have
$$\lim_{|(x_0,y)|\to +\infty}F_2(x_0,y)\not =0\, .$$
\end{example}

Now let us prove a convolution type result for compact $\mathrm{R}$-multi-almost automorphic function.
 Let us first introduce the following definition of $\mathrm{R}$-multi-bi almost autmorphy:
% $\lambda$-bounded function. % need the following two additional definitions

\begin{defn}\label{defBaa} A jointly continuous function $G:\mathbb{R}^n\times \mathbb{R}^n\times X \to Y$ is $(\mathrm{R},\mathcal{B})$-multi-bi-almost automorphic if for any $B \in \mathcal{B}$ and any sequence $\{s_n\} \subset \mathrm{R}$, there exist a subsequence $\{s'_n\}\subseteq \{s_n\}$ and a function $G^*:\mathbb{R}^n\times \mathbb{R}^n\times X \to Y$ such that
\begin{equation}\label{EqNew01}
\lim_{n \to +\infty} G({\bf t}+s_n, {\bf s}+s_n,x)=G^*({\bf t}, {\bf s},x)\, 
\end{equation}
and 
\begin{equation}\label{EqNew02}
\lim_{n \to +\infty}G^*({\bf t}-s_n, {\bf s}-s_n,x)=G({\bf t}, {\bf s},x)\, ,
\end{equation}
pointwise for $({\bf t}, {\bf s}) \in \mathbb{R}^n \times  \mathbb{R}^n$ and any $x \in B$. 
If the limits in (\ref{EqNew01}) and (\ref{EqNew02}) are uniform on compact sets of $\mathbb{R}^n \times \mathbb{R}^n$, then we say that $G$ is compactly $(\mathrm{R},\mathcal{B})$-multi-bi-almost automorphic.
\end{defn}

\begin{rem} Let us note that, this generalization of the notion of Bi-almost automorphicity is different from the one given by the notion of $\mathrm{R}$-multi almost automorphicity, in which 
$${\mathrm R}=\{b : {\mathbb N} \rightarrow {\mathbb R}^{n} \, ; \, \mbox{ for all }j\in {\mathbb N}\mbox{ we have }b_{j}\in \{ (a,a,a,\cdot \cdot \cdot, a) \in {\mathbb R}^{n} : a\in {\mathbb R}\}\}.$$
\end{rem}

Next, we introduce some abstract results about the existence of compactly  $({\mathrm R},{\mathcal B})$-multi-almost automorphic (resp. asymptotically $({\mathrm R},{\mathcal B})$-multi-almost automorphic) solutions. 

\begin{thm}\label{TeoComb}
Let $Z$ be a Banach space and $G:\mathbb{R}^l \times \mathbb{R}^l \times X \to Z $ be compact $(\mathrm{R},\mathcal{B})$-multi-bi-almost automorphic such that there exists $ \lambda : \mathbb{R}^l \longrightarrow [0, +\infty) $  satisfying
\begin{equation}
 \| G({\bf t},{\bf s};x)-G({\bf t},{\bf s};y)\|_{Z} \leq \lambda ({\bf t}-{\bf s}) \|x-y\| \quad \text{ for all } x,y \in X,
\end{equation}
with $\sup_{ {\bf t} \in \mathbb{R}^{l}} \int_{\mathbb{R}^l}  \lambda ({\bf t}-{\bf s}) d{\bf s} < \infty$. 
%() continuous, and such that given $\epsilon >0$, there exist $R_{\epsilon}>0$ big enough with
%$$\sup_{t \in \mathbb{R}^l}\int_{B^c_{R_{\epsilon}}}\lambda(t,s) ds < \epsilon\, .$$
Then, the operator $\Gamma $ defined by
$$\Gamma u({\bf t})= \int_{\mathbb{R}^l} G({\bf t},{\bf s};u({\bf s})) d{\bf s}\, ,$$
leaves invariant the space of $\mathrm{R}$-compact multi almost automorphic functions.
\end{thm}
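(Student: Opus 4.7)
My approach is a standard translation-invariance plus dominated convergence argument. The plan is: given a sequence $(\mathbf{s}_n)\in {\mathrm R}$, I will simultaneously extract common subsequences realizing the compact $\mathrm{R}$-multi-AA limit of $u$ and the compact $({\mathrm R},{\mathcal B})$-multi-bi-AA limit of $G$, pass inside the integral, and use the Lipschitz bound on $G$ to control the resulting difference via the integrable dominator $\lambda$.

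First, using the compact ${\mathrm R}$-multi-almost automorphy of $u$, I extract $(\mathbf{s}'_n)\subseteq (\mathbf{s}_n)$ and a bounded continuous limit $\tilde u$ with $u(\cdot+\mathbf{s}'_n)\to\tilde u$ and $\tilde u(\cdot-\mathbf{s}'_n)\to u$ uniformly on compacts (boundedness of $u,\tilde u$ being automatic). Choosing $B\in {\mathcal B}$ to contain $\overline{u(\mathbb{R}^l)\cup\tilde u(\mathbb{R}^l)}$ and applying the compact $({\mathrm R},{\mathcal B})$-multi-bi-almost automorphy of $G$ to $(\mathbf{s}'_n)$ and $B$, I extract a further subsequence $(\mathbf{s}''_n)$ and a limit $G^{\ast}$ realizing the bi-AA convergences uniformly on compacts in $(\mathbf{t},\mathbf{s})$ and pointwise in $x\in B$. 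Since the Lipschitz inequality passes to $G^{\ast}$ under pointwise limits, the candidate
$$(\Gamma u)^{\ast}(\mathbf{t}):=\int_{\mathbb{R}^l}G^{\ast}(\mathbf{t},\mathbf{s};\tilde u(\mathbf{s}))\,d\mathbf{s}$$
is well defined under the same hypothesis $\sup_{\mathbf{t}}\int\lambda(\mathbf{t}-\mathbf{s})\,d\mathbf{s}<\infty$.

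Performing the change of variable $\mathbf{s}\mapsto\mathbf{s}+\mathbf{s}''_n$ in $\Gamma u(\mathbf{t}+\mathbf{s}''_n)$, I write
$$\Gamma u(\mathbf{t}+\mathbf{s}''_n)-(\Gamma u)^{\ast}(\mathbf{t})=\int_{\mathbb{R}^l}\bigl(A_n(\mathbf{s})+B_n(\mathbf{s})\bigr)\,d\mathbf{s},$$
where $A_n(\mathbf{s})=G(\mathbf{t}+\mathbf{s}''_n,\mathbf{s}+\mathbf{s}''_n;u(\mathbf{s}+\mathbf{s}''_n))-G(\mathbf{t}+\mathbf{s}''_n,\mathbf{s}+\mathbf{s}''_n;\tilde u(\mathbf{s}))$ and $B_n(\mathbf{s})=G(\mathbf{t}+\mathbf{s}''_n,\mathbf{s}+\mathbf{s}''_n;\tilde u(\mathbf{s}))-G^{\ast}(\mathbf{t},\mathbf{s};\tilde u(\mathbf{s}))$. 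The Lipschitz hypothesis yields $\|A_n(\mathbf{s})\|\le\lambda(\mathbf{t}-\mathbf{s})\|u(\mathbf{s}+\mathbf{s}''_n)-\tilde u(\mathbf{s})\|$, which vanishes pointwise in $\mathbf{s}$ by the compact AA property of $u$ and is dominated by $2(\|u\|_{\infty}+\|\tilde u\|_{\infty})\lambda(\mathbf{t}-\mathbf{s})\in L^1(d\mathbf{s})$; dominated convergence annihilates $\int A_n$. For $\int B_n$, the integrand vanishes pointwise by the bi-AA property of $G$ and admits the analogous $L^1$-dominator (inherited by $G^{\ast}$), so dominated convergence again yields vanishing. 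The reverse limit $(\Gamma u)^{\ast}(\mathbf{t}-\mathbf{s}''_n)\to\Gamma u(\mathbf{t})$ follows symmetrically from the second halves of the AA and bi-AA limits.

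To obtain the compact (uniform-on-compacts in $\mathbf{t}$) version of the conclusion, one performs the change of variable $\mathbf{r}=\mathbf{t}-\mathbf{s}$ so that the dominator $\lambda(\mathbf{r})$ is independent of $\mathbf{t}$, then splits the $\mathbf{r}$-integral into a large ball $\{|\mathbf{r}|\le R\}$ and its complement: the tail is controlled by smallness of $\int_{|\mathbf{r}|>R}\lambda\,d\mathbf{r}$, while on the ball the compactness of $K$ and of $\bar B(0,R)$ lets the uniform-on-compacts convergences $u(\cdot+\mathbf{s}''_n)\to\tilde u$ and $G(\cdot+\mathbf{s}''_n,\cdot+\mathbf{s}''_n;x)\to G^{\ast}(\cdot,\cdot;x)$ carry through. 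The main obstacle I anticipate is precisely this last uniformity upgrade: controlling $G(\mathbf{t}+\mathbf{s}''_n,\mathbf{s}+\mathbf{s}''_n;\tilde u(\mathbf{s}))$ as $\mathbf{t}$ varies over $K$ uniformly in $\mathbf{s}$ is what forces the assumption of compact bi-almost automorphy (rather than separate AA in each argument), because the change of variable $\mathbf{s}\mapsto\mathbf{s}+\mathbf{s}''_n$ couples the shifts in both entries of $G$ and it is exactly the diagonal compact convergence of $G(\cdot+\mathbf{s}''_n,\cdot+\mathbf{s}''_n;x)$ that closes the argument.
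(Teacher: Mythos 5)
Your proof is correct and, for the core convergence step, coincides with the paper's: both arguments extract common subsequences realizing the limits of $u$ and of $G$, perform the change of variable $\mathbf{s}\mapsto\mathbf{s}+\mathbf{s}_n$ inside the integral, and split $\Gamma u(\mathbf{t}+\mathbf{s}_n)-(\Gamma u)^{\ast}(\mathbf{t})$ into exactly your $A_n$ (killed by the Lipschitz bound $\lambda(\mathbf{t}-\mathbf{s})\|u(\mathbf{s}+\mathbf{s}_n)-\tilde u(\mathbf{s})\|$ plus dominated convergence) and $B_n$ (killed by the bi-almost automorphic convergence of $G$ plus dominated convergence). Where you genuinely diverge is in upgrading the conclusion to \emph{compact} almost automorphy: the paper does not rerun the limit argument uniformly in $\mathbf{t}$; instead it invokes its earlier characterization that a function is compactly $(\mathrm{R},\mathcal{B})$-multi-almost automorphic if and only if it is $(\mathrm{R},\mathcal{B})$-multi-almost automorphic and $(\mathrm{R},\mathcal{B})$-uniformly continuous, and then closes with a second, separate estimate showing $\sup_{x\in B}\|v(\mathbf{a}_k)-v(\mathbf{b}_k)\|\to 0$ whenever $|\mathbf{a}_k-\mathbf{b}_k|\to 0$, again via the change of variable and the Lipschitz/bi-uniform-continuity bounds. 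Your direct route --- substituting $\mathbf{r}=\mathbf{t}-\mathbf{s}$ and splitting into $\{|\mathbf{r}|\le R\}$ and its complement, using the $L^1$-tail of $\lambda$ and the uniform-on-compacts convergences on the ball --- obtains the uniformity in $\mathbf{t}\in K$ self-containedly and avoids formulating the auxiliary uniform-continuity notion; the paper's route buys reusability of the characterization lemma. Both proofs share the same unaddressed technicalities, so they are not defects of your write-up specifically: one needs an integrable-in-$\mathbf{s}$ dominator for the $B_n$-integrand (a growth bound on $G$ beyond the Lipschitz increment, already implicit in the well-definedness of $\Gamma u$), and one tacitly strengthens the pointwise-in-$x$ clause of the bi-almost automorphy definition to uniformity over $x\in B$, which is needed because $x=\tilde u(\mathbf{s})$ varies with the integration variable.
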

\begin{proof}
Let $ u $ be compact $(\mathrm{R},\mathcal{B})$-multi-almost automorphic. Since the function $G$ is compact $\mathrm{R}$-multi-bi-almost automorphic, we can ensure that for given any sequence $\{\bf s'_n\}\subset \mathrm{R}$ there exist a subsequence $\{\bf s_n\}\subseteq \{\bf s'_n\}$ and a continuous function $G^{\ast}:\mathbb{R}^l \times \mathbb{R}^l \times X \to Z$ such that on compact subsets of $\mathbb{R}^l\times \mathbb{R}^l$ the following uniform limits hold
$$\lim\limits_{n\to +\infty} G({\bf t}+ {\bf s_n},{\bf s}+{\bf s_n };x)=:G^{\ast}({\bf t},{\bf s} ;x),\quad \lim\limits_{n\to +\infty} G^{\ast}({\bf t}-{\bf  s_n },{\bf s-s_n};x)=G({\bf t},{\bf s};x),$$
uniformly for $x \in B$. Moreover, there exists a function $\tilde{u}:\mathbb{R}^{l}\longrightarrow X $ such that on compact subsets of $\mathbb{R}^l$ the following uniform limits also hold
$$\lim\limits_{n\to +\infty} u({\bf t}+ {\bf s_n})=:\tilde{u}({\bf t}),\quad \lim\limits_{n\to +\infty} \tilde{u}({\bf t}- {\bf s_n})=u({\bf t}).$$

Let $v({\bf t}):=\Gamma u({\bf t})$ and $\tilde{v}({\bf t}):=\displaystyle \int_{\mathbb{R}^l} G^{\ast}({\bf t},{\bf s};\tilde{u}({\bf s}))d{\bf s}$.  Then,  it is clear by the Lebesgue's dominate convergence theorem that
%to see:
%\begin{equation}\label{Eq001}
%\lim\limits_{n\to +\infty} v(t+s_n)=:\tilde{v}(t),\, \, \lim_{n\to \infty}\tilde{v}(t-s_n)=v(t)\, ,
%\end{equation}

\begin{align*}
&\| v({\bf t}+{\bf s_n})-\tilde{v}({\bf t}) \|= \\
&\| \int_{\mathbb{R}^l}  G({\bf t}+ {\bf s_n},{\bf s}+{\bf s_n };u({\bf s}+{\bf s_n })) d{\bf s} - \int_{\mathbb{R}^l} G^{\ast}({\bf t},{\bf s};\tilde{u}({\bf s}))d{\bf s}\| \\
& \leq  \| \int_{\mathbb{R}^l}  G({\bf t}+ {\bf s_n},{\bf s}+{\bf s_n };u({\bf s}+{\bf s_n })) d {\bf s} -  \int_{\mathbb{R}^l}  G({\bf t}+ {\bf s_n},{\bf s}+{\bf s_n };\tilde{u}({\bf s})) d {\bf s}  \|  \\ &+
\| \int_{\mathbb{R}^l}  G({\bf t}+ {\bf s_n},{\bf s}+{\bf s_n };\tilde{u}({\bf s})) d {\bf s} - \int_{\mathbb{R}^l} G^{\ast}({\bf t},{\bf s};\tilde{u}({\bf s}))d {\bf s} \|
\\
& \leq   \int_{\mathbb{R}^l} \lambda ({\bf t}-{\bf s}) \| u({\bf s}+{\bf s_n })) -\tilde{u}({\bf s}))  \| d {\bf s} \\ &+
 \int_{\mathbb{R}^l} \sup_{x\in B}\|  G({\bf t}+ {\bf s_n},{\bf s}+{\bf s_n };x) -G^{\ast}({\bf t},{\bf s};x) \| d {\bf s}.
\end{align*}
What we have proved is that $v=\Gamma u$ is an $(\mathrm{R}, \mathcal{B})$-multi almost automorphic function. Henceforth, in view of Theorem \eqref{TeoComb}, we prove that $v$ is $(\mathrm{R}, \mathcal{B})$-uniformly continuous. Let any two sequences $({\bf a}_{k}=(a_{k}^{1},a_{k}^{2},\cdots ,a_{k}^{n})) , ({\bf b}_{k}=(b_{k}^{1},b_{k}^{2},\cdot \cdot\cdot ,b_{k}^{n})) \in {\mathrm R}$ such that $ |{\bf a}_{k}-{\bf b}_{k} | \rightarrow 0$. Then, we have
\begin{align}
\sup_{x\in B}\| G \bigl({\bf a}_{k},{\bf a}_{k};x\bigr) - G\bigl({\bf b}_{k},{\bf b}_{k};x\bigr) \|_{Y} \rightarrow 0 \quad \text{ as } k\rightarrow +\infty. \label{Bi-Unifom assumption 1}
\end{align}
It is clear that  \eqref{Bi-Unifom assumption 1} implies that for all $ {\bf t}, {\bf s} \in \mathbb{R}^{l}$, we have
\begin{align}
\sup_{x\in B}\| G \bigl( {\bf t}+{\bf a}_{k}, {\bf s} +{\bf a}_{k};x\bigr) - G\bigl( {\bf t}+ {\bf b}_{k}, {\bf s} + {\bf b}_{k};x\bigr) \|_{Y} \rightarrow 0 \quad \text{ as } k\rightarrow +\infty. \label{Bi-Unifom assumption 2}
\end{align}
Furthermore, we have
\begin{align}
\| u \bigl({\bf a}_{k} \bigr) - u\bigl({\bf b}_{k}\bigr) \|_{Y} \rightarrow 0,
\end{align}
as $ k\rightarrow +\infty $. Then, 

\begin{align*}
&\| v \bigl({\bf a}_{k}\bigr) - v\bigl({\bf b}_{k}\bigr) \| =\\
&\|  \int_{\mathbb{R}^l}   G({\bf a}_{k},s;u({\bf s})) d {\bf s}-\int_{\mathbb{R}^l} G({\bf b	}_{k},{\bf s};u({\bf s})) d {\bf s} \| \\
& \leq  \|  \int_{\mathbb{R}^l}   G({\bf a}_{k}, {\bf s}+{\bf a}_{k} ;u({\bf s}+{\bf a}_{k})) d {\bf s}-\int_{\mathbb{R}^l} G({\bf b}_{k}, {\bf s}+{\bf b}_{k};u({\bf s}+{\bf b}_{k})) d {\bf s} \| \\
 &\leq   \|  \int_{\mathbb{R}^l}   G({\bf a}_{k},{\bf s}+{\bf a}_{k} ;u({\bf s}+{\bf a}_{k})) d {\bf s} -\int_{\mathbb{R}^l} G({\bf a}_{k},{\bf s}+{\bf a}_{k};u({\bf s}+{\bf b}_{k})) d {\bf s} \| \\  & +
 \|  \int_{\mathbb{R}^l}   G({\bf a}_{k}, {\bf s}+{\bf a}_{k} ;u({\bf s}+{\bf b}_{k})) d{\bf s}-\int_{\mathbb{R}^l} G({\bf b}_{k},{\bf s}+{\bf b}_{k};u({\bf s}+{\bf b}_{k})) d {\bf s} \| \\
 &\leq   \int_{\mathbb{R}^l}  \lambda({\bf s}) \| u({\bf s}+{\bf a}_{k})- u({\bf s}+{\bf b}_{k}) \| d{\bf s} \\  & +
  \int_{\mathbb{R}^l}  \sup_{x\in B} \| G({\bf a}_{k},{\bf s}+{\bf a}_{k} ;x) -G({\bf b}_{k},{\bf s}+{\bf b}_{k};x) \| d {\bf s}.
\end{align*}
\end{proof}
 From the proof of Theorem  \ref{TeoComb}, one may deduce that the following result holds immediately.
\begin{cor}\label{CoroComb}
Let $Z$ be a Banach space and $G:\mathbb{R}^l \times \mathbb{R}^l \times X \to Z $ be compact $(\mathrm{R},\mathcal{B})$-multi-bi-almost automorphic such that there exists $ \lambda : \mathbb{R}^l \longrightarrow [0, +\infty) $  satisfying
\begin{equation}
 \| G({\bf t},{\bf s};x)-G({\bf t},{\bf s};y)\|_{Z} \leq \lambda ({\bf t}-{\bf s}) \|x-y\| \quad \text{ for all } x,y \in X, \label{Formula (3.6)}
\end{equation}
with $\sup_{ {\bf t} \in \mathbb{R}^{l}} \int_{\mathbb{R}^l}  \lambda ({\bf t}-{\bf s}) d{\bf s} < \infty$. 
%() continuous, and such that given $\epsilon >0$, there exist $R_{\epsilon}>0$ big enough with
%$$\sup_{t \in \mathbb{R}^l}\int_{B^c_{R_{\epsilon}}}\lambda(t,s) ds < \epsilon\, .$$
Then, the operator $\Gamma $ defined by
$$\Gamma u({\bf t})= \int_{\mathbb{R}^l} G({\bf t},{\bf s};u({\bf s})) d{\bf s}\, ,$$
leaves invariant the space of $\mathrm{R}$-compact multi almost automorphic functions.
\end{cor}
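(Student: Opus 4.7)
The statement of the corollary coincides almost verbatim with the conclusion of Theorem~\ref{TeoComb}; the phrase ``holds immediately'' signals that the proof is nothing more than an isolation of what has already been proved. My plan is therefore to package the argument of Theorem~\ref{TeoComb} into a self-contained verification of the two defining properties of a compact $(\mathrm{R},\mathcal{B})$-multi-almost automorphic function, rather than to add any new ingredient.

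First, I would fix a compactly $(\mathrm{R},\mathcal{B})$-multi-almost automorphic $u$ and a sequence $({\bf s}_n)\in\mathrm{R}$, and extract, via a single diagonal subsequence, indices along which both $u({\bf t}+{\bf s}_n)\to\tilde u({\bf t})$ uniformly on compact sets of ${\mathbb R}^l$ (with the required reverse limit) and $G({\bf t}+{\bf s}_n,{\bf s}+{\bf s}_n;x)\to G^{\ast}({\bf t},{\bf s};x)$ uniformly on compact subsets of ${\mathbb R}^l\times{\mathbb R}^l$ and on $B\in\mathcal{B}$ (with the corresponding reverse limit). The natural candidate for the limit of $v({\bf t}+{\bf s}_n)=\Gamma u({\bf t}+{\bf s}_n)$ is
$$\tilde v({\bf t}):=\int_{{\mathbb R}^l}G^{\ast}({\bf t},{\bf s};\tilde u({\bf s}))\,d{\bf s}.$$
Exactly as in the proof of Theorem~\ref{TeoComb}, I would split $\|v({\bf t}+{\bf s}_n)-\tilde v({\bf t})\|$ into a $\lambda$-weighted term bounded by $\int\lambda({\bf t}-{\bf s})\|u({\bf s}+{\bf s}_n)-\tilde u({\bf s})\|\,d{\bf s}$ and a second term bounded by $\int\sup_{x\in B}\|G({\bf t}+{\bf s}_n,{\bf s}+{\bf s}_n;x)-G^{\ast}({\bf t},{\bf s};x)\|\,d{\bf s}$; both are driven to zero by the dominated convergence theorem, with $\lambda({\bf t}-{\bf s})\|u\|_\infty$ serving as the common dominating integrand and hypothesis~(\ref{Formula (3.6)}) together with $\sup_{\bf t}\int\lambda({\bf t}-{\bf s})\,d{\bf s}<\infty$ providing integrability. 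The reverse convergence is symmetric.

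Second, to upgrade the resulting $(\mathrm{R},\mathcal{B})$-multi-almost automorphy of $v$ to compact multi-almost automorphy, I would invoke the characterization theorem proved earlier in this section (compactly $(\mathrm{R},\mathcal{B})$-multi-almost automorphic $\iff$ $(\mathrm{R},\mathcal{B})$-multi-almost automorphic and $(\mathrm{R},\mathcal{B})$-uniformly continuous). Uniform continuity of $v$ is established by feeding two sequences $({\bf a}_k),({\bf b}_k)\in\mathrm{R}$ with $|{\bf a}_k-{\bf b}_k|\to 0$ into the same kind of splitting: one half is controlled by the $(\mathrm{R},\mathcal{B})$-uniform continuity of $u$ via $\lambda$, and the other by the bi-version~\eqref{Bi-Unifom assumption 2} inherited from the compact multi-bi-almost automorphy of $G$.

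The main obstacle, exactly as in the theorem itself, is the justification of interchanging the limit and the integral over all of ${\mathbb R}^l$: the convergence of $G$ to $G^{\ast}$ is only compact-uniform in $({\bf t},{\bf s})$, so passing to the noncompact integral requires the uniform $\lambda$-domination together with the dominated convergence theorem. Once this interchange is secured, the corollary reads off from the proof of Theorem~\ref{TeoComb} without any further work, which is why the author labels it immediate.
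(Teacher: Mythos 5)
Your proposal is correct and follows essentially the same route as the paper: the corollary as printed is word-for-word the statement of Theorem~\ref{TeoComb}, the paper supplies no separate argument beyond pointing back to that theorem's proof, and your outline faithfully reproduces that proof (the splitting into a $\lambda$-weighted term and a $\sup_{x\in B}$ term, dominated convergence, and the upgrade to compactness via $(\mathrm{R},\mathcal{B})$-uniform continuity). Your closing remark that the only delicate point is interchanging the limit with the integral over all of $\mathbb{R}^l$ matches the paper's own (equally terse) handling of that step.
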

\begin{thm}\label{Exist comp multi-AA nonlinear}
Let $Z$ be a Banach space and $G:\mathbb{R}^l \times \mathbb{R}^l \times X \to Z $ be compact $(\mathrm{R},\mathcal{B})$-multi-bi-almost automorphic such that there exists $ \lambda : \mathbb{R}^l \longrightarrow [0, +\infty) $  satisfying
\begin{equation}
 \| G({\bf t},{\bf s};x)-G({\bf t},{\bf s};y)\|_{Z} \leq \lambda ({\bf t}-{\bf s}) \|x-y\| \quad \text{ for all } x,y \in X,
\end{equation}
with $\sup_{ {\bf t} \in \mathbb{R}^{l}} \int_{\mathbb{R}^l}  \lambda ({\bf t}-{\bf s}) d{\bf s} < \infty$. Assume that $\sup_{ {\bf t} \in \mathbb{R}^{l}} \int_{\mathbb{R}^l}  \lambda ({\bf t}-{\bf s}) d{\bf s} <1$. 
%() continuous, and such that given $\epsilon >0$, there exist $R_{\epsilon}>0$ big enough with
%$$\sup_{t \in \mathbb{R}^l}\int_{B^c_{R_{\epsilon}}}\lambda(t,s) ds < \epsilon\, .$$
Then, $\Gamma$ has a unique compact $(\mathrm{R},\mathcal{B})$-multi-almost automorphic solution defined by
$$ u({\bf t})=  \int_{\mathbb{R}^l} G({\bf t},{\bf s};u({\bf s})) d{\bf s}, \quad  {\bf t} \in \mathbb{R}^{l} .$$
\end{thm}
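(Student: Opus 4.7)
The plan is to apply the Banach contraction mapping principle on the Banach space
$$
E:=AA_{(\mathrm{R},\mathcal{B},{\bf c})}(\mathbb{R}^{l}:X)\cap C_{b}(\mathbb{R}^{l}:X),
$$
equipped with the sup-norm $\|\cdot\|_{\infty}$. First, I would check that $(E,\|\cdot\|_{\infty})$ is indeed a Banach space: $C_{b}(\mathbb{R}^{l}:X)$ is complete under $\|\cdot\|_{\infty}$, and Proposition \ref{2.1.10} guarantees that the uniform limit of compactly $(\mathrm{R},\mathcal{B})$-multi-almost automorphic functions is again compactly $(\mathrm{R},\mathcal{B})$-multi-almost automorphic, so $E$ is a closed subspace and hence complete.

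Next, I would verify that $\Gamma$ maps $E$ into itself. This is exactly the content of Corollary \ref{CoroComb} (equivalently Theorem \ref{TeoComb}): under the stated Lipschitz condition on $G$ and the integrability hypothesis on $\lambda$, the operator $\Gamma$ leaves the space of compact $(\mathrm{R},\mathcal{B})$-multi-almost automorphic functions invariant. One should also note that for $u\in E$, the boundedness of $\Gamma u$ follows from
$$
\|\Gamma u({\bf t})\|\leq \int_{\mathbb{R}^{l}}\|G({\bf t},{\bf s};u({\bf s}))-G({\bf t},{\bf s};0)\|_{Z}\,d{\bf s}+\int_{\mathbb{R}^{l}}\|G({\bf t},{\bf s};0)\|_{Z}\,d{\bf s},
$$
where the first summand is bounded by $\|u\|_{\infty}\sup_{{\bf t}}\int_{\mathbb{R}^{l}}\lambda({\bf t}-{\bf s})\,d{\bf s}$, and the second is finite (this may require the mild additional assumption that ${\bf s}\mapsto G({\bf t},{\bf s};0)$ is integrable; otherwise one works in the closed affine subspace $\Gamma 0+E$).

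Finally, the contraction estimate is immediate: for $u,v\in E$ and any ${\bf t}\in\mathbb{R}^{l}$,
\begin{align*}
\|\Gamma u({\bf t})-\Gamma v({\bf t})\|
&\leq \int_{\mathbb{R}^{l}}\lambda({\bf t}-{\bf s})\|u({\bf s})-v({\bf s})\|\,d{\bf s}\\
&\leq \|u-v\|_{\infty}\sup_{{\bf t}\in\mathbb{R}^{l}}\int_{\mathbb{R}^{l}}\lambda({\bf t}-{\bf s})\,d{\bf s}=:\kappa\,\|u-v\|_{\infty},
\end{align*}
where $\kappa<1$ by hypothesis. Taking the supremum over ${\bf t}$ yields $\|\Gamma u-\Gamma v\|_{\infty}\leq \kappa\|u-v\|_{\infty}$, so $\Gamma$ is a strict contraction on $E$. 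The Banach fixed point theorem then provides a unique $u\in E$ with $u=\Gamma u$, which is the desired compact $(\mathrm{R},\mathcal{B})$-multi-almost automorphic solution.

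The only step requiring more than bookkeeping is the invariance $\Gamma(E)\subseteq E$, but it has already been established in Corollary \ref{CoroComb}; the rest is a direct application of the contraction principle. The subtle point to keep in mind is that one must work on a space on which $\Gamma$ is genuinely well-defined and bounded—either imposing integrability of ${\bf s}\mapsto G({\bf t},{\bf s};0)$ uniformly in ${\bf t}$, or reformulating the fixed point problem on the complete metric space $\{u\in AA_{(\mathrm{R},\mathcal{B},{\bf c})}(\mathbb{R}^{l}:X):\|u-\Gamma 0\|_{\infty}<\infty\}$ so that Banach's theorem applies without ambiguity.
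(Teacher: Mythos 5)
Your proposal is correct and follows essentially the same route as the paper: the paper likewise establishes the contraction estimate $\|\Gamma u-\Gamma v\|_{\infty}\leq\bigl(\sup_{{\bf t}}\int_{\mathbb{R}^{l}}\lambda({\bf t}-{\bf s})\,d{\bf s}\bigr)\|u-v\|_{\infty}$ and invokes the Banach contraction principle, with the invariance of the space of compact $(\mathrm{R},\mathcal{B})$-multi-almost automorphic functions supplied by the preceding theorem. Your additional remarks on the completeness of the ambient space and on the implicit integrability of ${\bf s}\mapsto G({\bf t},{\bf s};0)$ (needed for $\Gamma$ to be well defined and bounded) address points the paper leaves tacit, and are a welcome tightening rather than a deviation.
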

\begin{proof}
Let $ u,v $ be compact $(\mathrm{R},\mathcal{B})$-multi-almost automorphic functions. Then,
\begin{eqnarray*}
\| \Gamma u({\bf t})- \Gamma v({\bf t}) \|_{Y} &\leq & \int_{\mathbb{R}^{l}} \| G({\bf t},{\bf s};u({\bf s}))-G({\bf t},{\bf s};v({\bf s}))\|_{Z} \ d {\bf s} \\ 
&\leq & \int_{\mathbb{R}^{l}} \lambda ({\bf t}-{\bf s})  \| u({\bf s})- v({\bf s})\|_{Y} \ d {\bf s} \\
&\leq & \int_{\mathbb{R}^{l}} \lambda ({\bf t}-{\bf s})  \ d {\bf s}  \| u- v\|_{\infty}, \quad  {\bf t} \in \mathbb{R}^{l}.
\end{eqnarray*}
Hence, $$  \| \Gamma u- \Gamma v \|_{\infty}  \leq  \sup_{ {\bf t} \in \mathbb{R}^{l}}  \int_{\mathbb{R}^{l}} \lambda ({\bf t}-{\bf s})  \ d {\bf s}  \| u- v\|_{\infty}.$$
So, the result follows in view of the Banach contraction principle.
\end{proof}
Similarly to Theorem \ref{Exist comp multi-AA nonlinear}, we prove the existence and uniqueness results of $\mathbb{D}$-asymptotically ${\mathrm{R}}$-multi-almost automorphic solutions to equation. Moreover, we have the following existence result.
\begin{itemize}
\item [(E4)] Let $ F:\mathbb{R}^{n}\times X\longrightarrow Y $ and $L_F \geq 0 $ such that 
$$  \| F( {\bf t};x)-F( {\bf t};y) \|_{Y} \leq L_F \|x-y \|, \quad {\bf t} \in \mathbb{R}^{l}, \, x,y \in X. $$
\end{itemize}
\begin{prop}
Let $ F, \ G:\mathbb{R}^{n}\times X\longrightarrow Y $ be $\mathbb{D}$-asymptotically $({\mathrm{R}},\mathcal{B})$-multi-almost automorphic such that the conditions $(E1)$-$(E4)$ (for $F$ and $G$ with $L_G$-Lipschitz constant) are  satisfied, and let $\mathbb{D}_{{\bf t}}:=\mathcal{I}_{{\bf t}}\cap \mathbb{D}$ with $int(\mathbb{D}_{{\bf t_0}})\not = \emptyset$ for some ${\bf t_0}\in \mathbb{R}^n$. Then, the integral operator
$$  \Gamma u({\bf t})=G({\bf t};u({\bf t})) +\int\limits_{\mathbb{D}_{{\bf t}}} K({\bf t}-\eta)F(\eta, u(\eta))d \eta \, ,$$
maps the space of $\mathbb{D}$-asymptotically ${\mathrm{R}}$-multi-almost automorphic functions into itself.
\end{prop}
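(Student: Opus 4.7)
The plan is to decompose $u$, $F$, and $G$ into their $\mathbb{D}$-asymptotic components and then apply Theorem~\ref{eovakoonako} (the composition theorem) and Theorem~\ref{Theorem existence linear VIO} (the integral-invariance theorem), summing the resulting decompositions via Proposition~\ref{kontinuitet}(iv). Write $u=u_{a}+u_{0}$, $F=F_{a}+F_{0}$, and $G=G_{a}+G_{0}$, where the subscript $a$ denotes the $(\mathrm{R},\mathcal{B})$-multi-almost automorphic part and the subscript $0$ denotes the $C_{0,\mathbb{D},\mathcal{B}}$ part. The aim is to realize $\Gamma u$ as a sum of an $(\mathrm{R},\mathcal{B})$-multi-almost automorphic function and an element of $C_{0,\mathbb{D},\mathcal{B}}(\mathbb{R}^{n}:Y)$.

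For the pointwise summand ${\bf t}\mapsto G({\bf t};u({\bf t}))$, I would use the telescoping identity
\begin{equation*}
G({\bf t};u({\bf t})) = G_{a}({\bf t};u_{a}({\bf t})) + \bigl[G({\bf t};u({\bf t}))-G({\bf t};u_{a}({\bf t}))\bigr] + G_{0}({\bf t};u_{a}({\bf t})).
\end{equation*}
By (E4), the bracketed term is bounded in norm by $L_{G}\|u_{0}({\bf t})\|$ and hence lies in $C_{0,\mathbb{D},\mathcal{B}}$. The last summand also lies in $C_{0,\mathbb{D},\mathcal{B}}$ since $u_{a}$ has bounded range (by boundedness of $u$ together with Lemma~\ref{obazac}) and $G_{0}$ decays on $\mathbb{D}$ uniformly over that range. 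The first summand is $(\mathrm{R},\mathcal{B})$-multi-almost automorphic by Theorem~\ref{eovakoonako} applied to $u_{a}$ (viewed as an $x$-independent multi-almost automorphic map $\mathbb{R}^{n}\to X$) composed with $G_{a}$.

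For the integral summand, I would first establish that $\eta\mapsto h(\eta):=F(\eta;u(\eta))$ is itself $\mathbb{D}$-asymptotically $\mathrm{R}$-multi-almost automorphic by running the identical three-piece argument with $F$ in place of $G$. Once $h=h_{a}+h_{0}$ is in hand, Theorem~\ref{Theorem existence linear VIO}, whose hypotheses (E1)-(E3) are in force, produces a $\mathbb{D}$-asymptotic multi-almost automorphic decomposition of
\begin{equation*}
{\bf t}\longmapsto \int_{\mathbb{D}_{{\bf t}}} K({\bf t}-\eta)\,h(\eta)\,d\eta.
\end{equation*}
Summing the decompositions of the two summands then yields the desired decomposition of $\Gamma u$.

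The principal technical obstacle is verifying the Lipschitz-type hypothesis that Theorem~\ref{eovakoonako} requires for the multi-almost automorphic pieces $F_{a}$ and $G_{a}$, since (E4) delivers Lipschitz constants only for $F$ and $G$ themselves. I would handle this either by invoking the weaker Theorem~\ref{eovako12345onako}, whose uniform continuity hypothesis on the relevant bounded sets of the form $\bigcup_{{\bf t}}F^{\ast}({\bf t};B)$ is available here because $u_{a}$ is bounded and $F_{a},G_{a}$ are jointly continuous on bounded subsets of $\mathbb{R}^{n}\times X$, or by noting that the Lipschitz estimate transfers from $F=F_{a}+F_{0}$ to $F_{a}$ once one controls $F_{0}({\bf t};x)-F_{0}({\bf t};y)$ on the bounded range of $u_{a}\cup u_{a}^{\ast}$. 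This closes the composition step and completes the proof.
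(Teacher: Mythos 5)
Your proof is correct and follows essentially the same route as the paper's: the paper simply invokes its asymptotic composition theorem to conclude that $\eta\mapsto F(\eta;u(\eta))$ and ${\bf t}\mapsto G({\bf t};u({\bf t}))$ are $\mathbb{D}$-asymptotically ${\mathrm R}$-multi-almost automorphic and then feeds the former into Theorem~\ref{Theorem existence linear VIO}, exactly as you do, except that you unfold the composition step via the telescoping decomposition rather than citing it as a black box. Your closing observation about the mismatch between (E4) (which gives Lipschitz constants for $F$ and $G$ themselves) and the composition theorem's hypothesis (which requires the Lipschitz estimate for the almost automorphic parts $F_{a}$, $G_{a}$) flags a point that the paper's two-line proof silently glosses over, and either of your proposed repairs is a reasonable way to close it.
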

\begin{proof}
Let $ u $ be $\mathbb{D}$-asymptotically ${\mathrm{R}}$-multi-almost automorphic. Then, Theorem \ref{eovakoonako} yields that $\eta \longmapsto F(\eta,u(\eta)), \ G(\eta,u(\eta))$ are $\mathbb{D}$-asymptotically ${\mathrm{R}}$-multi-almost automorphic functions. Hence, from Theorem \ref{Theorem existence linear VIO}, we deduce that $ {\bf t} \longmapsto \Gamma u({\bf t}) $ as sum of two $\mathbb{D}$-asymptotically ${\mathrm{R}}$-multi-almost automorphic functions.
\end{proof}
\begin{thm}\label{Asym Nonli integral Volt}
Let $ F, \ G:\mathbb{R}^{n}\times X\longrightarrow Y $ be $\mathbb{D}$-asymptotically $({\mathrm{R}},\mathcal{B})$-multi-almost automorphic such that the conditions $(E1)$-$(E4)$ are all satisfied, and let $\mathbb{D}_{{\bf t}}:=\mathcal{I}_{{\bf t}}\cap \mathbb{D}$ with $int(\mathbb{D}_{{\bf t_0}})\not = \emptyset$ for some ${\bf t_0}\in \mathbb{R}^n$. Assume that $L_G + L_F  \sup_{{\bf t} \in \mathbb{R}^{l}} \int\limits_{\mathbb{D}_{{\bf t}}} K({\bf t}-\eta)  <1$. Then, the integral operator
$$  \Gamma u({\bf t})=G({\bf t};u({\bf t})) + \int\limits_{\mathbb{D}_{{\bf t}}} K({\bf t}-\eta)F(\eta, u(\eta))d \eta \, ,$$
has a unique $\mathbb{D}$-asymptotically ${\mathrm{R}}$-multi-almost automorphic solution i.e., $\Gamma u=u$.
\end{thm}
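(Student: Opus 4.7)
The plan is to apply the Banach contraction principle on the space of $\mathbb{D}$-asymptotically $({\mathrm R},{\mathcal B})$-multi-almost automorphic functions endowed with the supremum norm. Two ingredients are needed: (i) this space is a closed subspace of the Banach space $C_{b}({\mathbb R}^{n}:X)$ and thus itself a Banach space, and (ii) $\Gamma$ is a self-map of this space which is a strict contraction under the stated quantitative hypothesis.

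\textbf{Step 1 (invariance).} By the preceding proposition, under hypotheses (E1)--(E4) the operator $\Gamma$ sends $\mathbb{D}$-asymptotically $({\mathrm R},{\mathcal B})$-multi-almost automorphic functions to functions of the same type. This handles the self-map property and requires no new work.

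\textbf{Step 2 (completeness of the ambient space).} The space of $\mathbb{D}$-asymptotically $({\mathrm R},{\mathcal B})$-multi-almost automorphic functions is closed under uniform limits by Proposition \ref{2.1.10obazac} (combined with the completeness of $C_{0,\mathbb{R}^{n},{\mathcal B}}(\mathbb{R}^{n}\times X:Y)$ noted in its proof). Since these functions are bounded (the almost automorphic part is bounded and the corrector lies in $C_{0,\mathbb{D},{\mathcal B}}$), this is a Banach space with the sup norm.

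\textbf{Step 3 (contraction estimate).} For any two such functions $u,v$, using (E4) on $G$ and $F$ together with the kernel estimate,
\begin{align*}
\|\Gamma u({\bf t}) - \Gamma v({\bf t})\|_{Y} &\leq \|G({\bf t};u({\bf t})) - G({\bf t};v({\bf t}))\|_{Y} + \int_{\mathbb{D}_{{\bf t}}} K({\bf t}-\eta)\,\|F(\eta;u(\eta))-F(\eta;v(\eta))\|_{Y}\,d\eta \\
&\leq L_{G}\,\|u({\bf t})-v({\bf t})\| + L_{F}\,\|u-v\|_{\infty}\int_{\mathbb{D}_{{\bf t}}} K({\bf t}-\eta)\,d\eta \\
&\leq \Bigl(L_{G} + L_{F}\sup_{{\bf t}\in \mathbb{R}^{l}}\int_{\mathbb{D}_{{\bf t}}} K({\bf t}-\eta)\,d\eta\Bigr)\,\|u-v\|_{\infty}.
\end{align*}
Taking the supremum over ${\bf t}$ and invoking the standing assumption $L_{G} + L_{F}\sup_{{\bf t}}\int_{\mathbb{D}_{{\bf t}}}K({\bf t}-\eta)\,d\eta < 1$, this gives $\|\Gamma u - \Gamma v\|_{\infty} \leq q\|u-v\|_{\infty}$ with $q<1$. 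The Banach contraction principle then yields a unique fixed point $u=\Gamma u$ in the space, which is precisely the unique $\mathbb{D}$-asymptotically ${\mathrm R}$-multi-almost automorphic solution.

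The only delicate point is Step 2: one must be sure that the subspace of $\mathbb{D}$-asymptotically $({\mathrm R},{\mathcal B})$-multi-almost automorphic functions is genuinely closed under uniform convergence (so that Banach's theorem applies inside it, not merely inside $C_{b}$). This is precisely the content of Proposition \ref{2.1.10obazac}, whose applicability requires conditions [D1.]--[D2.] on ${\mathrm R}$; if these are not built into the standing setup they should be noted as tacit assumptions. Every other step reduces to a direct triangle-inequality estimate, so no further technical obstacle is expected.
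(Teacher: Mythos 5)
Your proposal is correct and follows essentially the same route as the paper: the self-map property is delegated to the preceding proposition, the contraction estimate is the identical triangle-inequality computation, and the conclusion is the Banach fixed point theorem. Your Step 2 (closedness of the space under uniform limits via Proposition \ref{2.1.10obazac}, hence completeness, with the caveat that conditions [D1.]--[D2.] are tacitly needed) is in fact more careful than the paper, which invokes the contraction principle without addressing completeness of the underlying space.
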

\begin{proof}
Let $ u,v $ be $\mathbb{D}$-asymptotically ${\mathrm{R}}$-multi-almost automorphic. Then,
\begin{eqnarray*}
\| \Gamma u({\bf t})- \Gamma v({\bf t}) \|_{Y} &\leq & \| G({\bf t};u({\bf t}))-G({\bf t};v({\bf t})) \|+ \int\limits_{\mathbb{D}_{{\bf t}}} K({\bf t}-\eta) \| F(\eta, u(\eta))-F(\eta, v(\eta))\|  d \eta\\ 
&\leq & L_G \| u({\bf t})-v({\bf t}) \| + L_F \int\limits_{\mathbb{D}_{{\bf t}}} K({\bf t}-\eta) \| u({\eta})- v({\eta})\|_{Y} \ d \eta  \\
&\leq & \left( L_G+L_F  \sup_{{\bf t} \in \mathbb{R}^{l}}  \int\limits_{\mathbb{D}_{{\bf t}}} K({\bf t}-\eta) \ d \eta \right)   \| u- v\|_{\infty}, \quad  {\bf t} \in \mathbb{R}^{l}.
\end{eqnarray*}
Hence, $$  \| \Gamma u- \Gamma v \|_{\infty}  \leq  \left( L_G+L_F  \sup_{{\bf t} \in \mathbb{R}^{l}}  \int\limits_{\mathbb{D}_{{\bf t}}} K({\bf t}-\eta) \ d \eta \right)   \| u- v\|_{\infty}.$$
So, the result follows in view of the Banach contraction principle.
\end{proof}
%$(t,s)\in \mathbb{R}^n\times \mathbb{R}^n$ uniformly for ane $B \in \mathcal{B}$, if given any sequence $\{s_n\} \subset \mathbb{R}^n$ and any bounded set $\mathcal{B}\subset  X\times Y$, there exists a subsequence $\{s'_n\}\subseteq \{s_n\}$,
%such that the function
%$$\tilde G(t,s,x,y):=\lim_{n\to \infty}G(t+s'_n,s+s'_n,x,y)\, ,$$ %\, \, \, (x,y) \in \mathcal{B}.$$
%is well defined for any $(x,y) \in \mathcal{B}$ and each $(t,s)\in \mathbb{R}^n\times \mathbb{R}^n$, and also we have the limit
%$$\lim_{n\to \infty}\tilde G(t-s'_n,s-s'_n,x,y)=G(t,s,x,y)\, ,$$ %, \, \, \, (x,y) \in \mathcal{B}\, ,$$ç
%for any $(x,y) \in \mathcal{B}$ and each $(t,s)\in \mathbb{R}^n\times \mathbb{R}^n$.
%(resp.$$||C(t+s'_n,s+s'_n)-\tilde C(t,s)||\leq \epsilon \lambda (t,s))$$

\section[Applications]{Applications to the abstract multidimensional Volterra integral equations and partial differential equations}\label{manuel}
In this section, we present different applications to a wide class of nonlinear multi-dimensional Volterra integral equations, linear  and nonlinear partial differential equations, and integrodifferential equations. 

\subsection{Almost automorphic solutions to nonlinear multidimensional Volterra integral equations with infinite delay}\label{Nn AVIEDinf}
Let ${\bf t}=(x,y)$, we aim to study the following integral equation of Volterra type with infinite delay:
\begin{equation}\label{VIE01}
f({\bf t})=g({\bf t})+\int_{-\infty}^x\int_{-\infty}^y K({\bf t}-\eta)h(\eta,f(\eta))d\eta
\end{equation}
where $g:\mathbb{R}^2 \to Y$ is compact $\mathrm{R}$-multi-almost automorphic and  $h:\mathbb{R}^2\times X\to Y$ is compactly $(\mathrm{R},\mathcal{B})$-multi almost automorphic, and $\mathcal{B}$ consists of bounded subsets of $X$. Assume that $h$ is $L_h$-Lipschitzian in the second variable. Define $G({\bf t}, \eta ;x) :=K({\bf t}-\eta )\chi_{\mathcal{I}_{{\bf t}}}(\eta)  h(\eta,x) $. By construction, the map $G$ is compactly $(\mathrm{R},\mathcal{B})$-multi-bi-almost automorphic and it satisfies \eqref{Formula (3.6)} with $\lambda(\cdot)= L_h \ K(\cdot )\chi_{\mathcal{I}_{{\bf t}}}(\cdot)  $ which is assumed to be integrable (if the kernel $K$ is integrable) such that $ L_h \sup_{{\bf t }} \int_{-\infty}^x\int_{-\infty}^y K( {\bf t}-\eta ) d \eta <1$. Therefore, since $g$ is compactly $\mathrm{R}$-multi-almost automorphic, it follows in view of Theorem \ref{Exist comp multi-AA nonlinear}, that the integral operator in the equation (\ref{VIE01}) has a unique compact $\mathrm{R}$-multi-almost automorphic solution.
\begin{example}
Consider the following  application to the two-dimensional nonlinear Volterra integral equation of the second kind with infinite delay, see \cite{AD,AIK} for some examples in the absence of delay (i.e., $x,y \geq 0$), namely
\begin{equation}
u(x, y) = g(x, y) + \int_{-\infty}^x \int_{-\infty}^y K(x, y, s, t, u(s, t)) ds dt, \quad  ( x, y) \in \mathbb{R}^2 , \label{VIE01 Example}
\end{equation}
where $u$ is the (unknown) state function called the solution of two-dimensional integral Volterra equation, the kernel $K$ is a known nonlinear function in $u$ and $g$ is also a known function. This kind of integral equations have significant applications in various domains of applied science and engineering, see the references mentioned in the papers \cite{AD,AIK} about the modeling overview of this kind of integral equations. Moreover, this kind of multi-dimensional integral operators appears a solution of nonlinear elliptic differential equations, see \cite[Chapter 10]{Cor-Hil}. Let us consider
\begin{eqnarray*}
K(x, y, s, t, u(s, t)) &:=&\gamma e^{-|x-s |-| y-t|}\cos(s)\sin(t)\ln(1+\mid u(s,t) \mid )\\
 &=& k(s-x,t-y)h((s,t),u(s,t))\ln(1+\mid u(s,t) \mid ),
\end{eqnarray*}
$k(\eta_1,\eta_2)=e^{-|\eta_1|-| \eta_2|}$ is the kernel and $h(\eta_1,\eta_2,u(\eta_1,\eta_2))=\gamma \cos(\eta_1)\sin(\eta_2) \ln(1+\mid u(\eta_1,\eta_2) \mid )$  which is $\gamma$-Lipschitzian in the second variable and $\gamma$ is chosen sufficiently small.  Moreover, we take $$g(x,y)=\left[ \sin(x)+\sin(\pi x)\right] \left[ \cos(x)+\cos(\pi x)\right]+ \dfrac{1}{\sqrt{1+x^{2}+y^{2}}}.$$ Hence, under the above considerations, equation  \eqref{VIE01 Example} has the same form as  \eqref{VIE01}. Further on, all the assumptions and hypotheses of Section \ref{Nn AVIEDinf} are satisfied. Consequently, equation \eqref{VIE01 Example} has a unique compact almost automorphic  solution.
\end{example}

\subsection{Asymptotically almost automorphic solutions to abstract Volterra integral equations}
Let ${\bf t}=(x,y)$, $\mathbb{D}=[0,+\infty)\times [0,+\infty)$ and $\mathbb{D}_{{\bf t}}=[0,x]\times [0,y]$. Here we prove the existence and uniqueness of $\mathbb{D}$-asymptotically ${\mathrm{R}}$-multi-almost automorphic solutions to the following abstract integral equation of Volterra type, namely:
\begin{equation}\label{VIE02}
f({\bf t})=g({\bf t};f({\bf t}))+\int_0^x\int_0^y K({\bf t}-\eta)h(\eta,f(\eta))d\eta,
\end{equation}
where $g:\mathbb{R}^2 \times X \to Y$  and  $h:\mathbb{R}^2\times X\to Y$ are two $\mathbb{D}$-asymptotically $({\mathrm{R}},\mathcal{B})$-multi-almost automorphic functions with respect to ${\bf t}$ and respectively $L_g$, $L_h$ Lipschitzian with respect to $x$, whereas $\mathcal{B}$ consists of the collection of bounded subsets of $X$. For the kernel $K$ it is assumed that it satisfies conditions $(E1)$, $(E2)$ and $(E3)$. Hence, under the assumption that $L_g + L_h  \sup_{{\bf t} \in \mathbb{R}^{l}}\int_0^x\int_0^y K({\bf t}-\eta) d \eta <1$ it yields from Theorem \ref{Asym Nonli integral Volt} that the integral equation (\ref{VIE02}) has a unique $\mathbb{D}$-asymptotically ${\mathrm{R}}$-multi-almost automorphic solution.
\begin{example}
Consider the following nonlinear wave  partial differential equation:
\begin{equation}\label{PDE app Volt 2D}
\dfrac{\partial^{2} u}{\partial t^{2}} =\dfrac{\partial^{2} u}{\partial x^{2}}  -\delta \cos(t)e^{-x} \sin(u) , \, t,x \geq 0,
\end{equation}
with the initial conditions given on the $x=\pm t$ i.e., $u(x,x)=e^{-x}$ and $u(t,-t)=-e^{-t}$. We argue similarly as in the paper \cite{McTaDio} and we use the transformation  around the lines $ y=x+t$ and $s=x-t$. Define the new state function by 
$$  v(y,s)=\dfrac{1}{4}u\left(\dfrac{1}{2}(Y+S),\dfrac{1}{2}(Y-S)\right) .$$
Then, a further calculation yields that equation \eqref{PDE app Volt 2D} is equivalent to the following integral two-dimensional Volterra nonlinear operators of the form
%\begin{eqnarray*}
%u(x, y) &=& f (x, y) + \int_{0}^{x} K_2(x, y, s, u(s, t)) ds + \int_{0}^{y} K_1(x, y, t, u(s, t)) dt \\
%&&+ \int_{0}^x \int_{0}^y K(x, y, s, t, u(s, t)) ds dt, \quad  (x,y) \in \mathbb{R}^2 ,
%\end{eqnarray*}
%Consider the following  application to the two-dimensional nonlinear Volterra integral equation with infinite delay, see \cite{AIK} for the modeling overview concerning this type of equations (in the paper the delay time does not appear, i.e $x,y \geq 0$): 
\begin{eqnarray*}
v(y, s) &=& g (y, s,u(y,s)) -\delta \int_{0}^y \int_{0}^s  \sin (v(Y,S)) \cos \left(\dfrac{1}{2}(Y-S)\right) e^{-\dfrac{1}{2}(Y+S)} dYdS,
\end{eqnarray*}
where $$ g (y, s,v(y,s)) =e^{-\frac{y}{2}}-e^{-\frac{s}{2}}. $$
Then the kernel $K(Y,S)=- e^{-\dfrac{1}{2}(Y+S)}$ satisfies the assumptions $(E1)$, $(E2)$ and $(E3)$. The function $g$ is $\mathbb{D}$-asymptotically-multi-almost automorphic and $h(y,s,v(y,s))=\delta \cos (\dfrac{1}{2}(Y-S))  \sin (v(Y,S))$, which is $\delta$-Lipschitzian with respect to $v$ and $\mathbb{D}$-asymptotically-multi-almost automorphic with respect to $(Y,S)$. Thus, by taking $\delta$ small enough, we obtain that the equations \eqref{PDE app Volt 2D} has a unique $\mathbb{D}$-asymptotically-multi-almost automorphic solution.
\end{example}

%\textcolor{blue}{CONCRETE EXAMPLES?, MULTIDIMENSIONAL INTEGRO-DIFFERENTIAL EQNS?}.

%\subsection{Multidimensional almost (automorphic) periodic viscosity solution to Hamilton-Jacobi equations }
%
%\noindent In this section the equation of our interest is the following one
%\begin{eqnarray}\label{HJaa}
%u_t+H(\nabla_x u)&=& 0 \\
%u(0,x)&=&u_0(x)\, .\nonumber
%\end{eqnarray}
%We will prove that if the initial profile $u_0$ is muldimensional almost automorphic, then its solution $u(t,x)$ remains multidimensional almost automorphic in space variable. Let us  first recover some results form the theory of viscosity solutions of Hamilton-Jacobi equations.

%Uhm Search for uniformly AA functions. Work of Veech ...!

%%%%%
%%%%%POISSON EQUATION
%%%%%

\subsection{Poisson equation: $\mathrm{R}$-Compact multi-almost automorphic solutions.}

Let us denote by $\Delta$ the Laplace operator and $\mathrm{R}$ be the set of sequences such that all its subsequences are also in $\mathrm{R}$. In this subsection we shall prove the following main Theorem which extend (and improve) Sibuya's result for multi-almost periodic functions (\cite{YSibuya}). 
\begin{thm}\label{TeoPoisson}
Let $f$ be an $\mathrm{R}$-multi-almost automorphic function and let $u \in C_b(\mathbb{R}^n)$. Assume that $u$ solves the (Poisson) equation i.e.,
\begin{equation}\label{PoissonEq}
\Delta u=f
\end{equation}
in the distributional sense. Then, $u$ is $\mathrm{R}$-compact almost automorphic.
\end{thm}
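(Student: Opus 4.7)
The plan is to invoke the characterization (established earlier in the paper) that a continuous function is compactly $({\mathrm R},{\mathcal B})$-multi-almost automorphic if and only if it is $({\mathrm R},{\mathcal B})$-multi-almost automorphic together with $({\mathrm R},{\mathcal B})$-uniformly continuous. In the scalar (no $x$-variable) setting of this theorem it therefore suffices to verify (i) $u$ is $\mathrm{R}$-uniformly continuous on $\mathbb{R}^{n}$, and (ii) $u$ is $\mathrm{R}$-multi-almost automorphic in the pointwise sense.

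First I would settle (i). Since $f$ is $\mathrm{R}$-multi-almost automorphic, and $f\in C_{b}(\mathbb{R}^{n})$ either by standard boundedness of scalar a.a.\ functions or, if needed, as an additional hypothesis implicit in Sibuya-type statements, interior Schauder / Calder\'on--Zygmund estimates applied to $\Delta u=f$ on unit balls $B({\bf t},1)$ yield $\|\nabla u\|_{L^{\infty}(B({\bf t},1/2))}\le C(\|u\|_{\infty}+\|f\|_{\infty})$ uniformly in ${\bf t}\in\mathbb{R}^{n}$. Hence $u$ is globally Lipschitz on $\mathbb{R}^{n}$, and in particular $\mathrm{R}$-uniformly continuous.

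For (ii), fix a sequence $(b_{k})\in\mathrm{R}$. The $\mathrm{R}$-multi-almost automorphy of $f$ supplies a subsequence $(b_{k_{l}})$ and a function $f^{\ast}$ with $f(\cdot+b_{k_{l}})\to f^{\ast}$ and $f^{\ast}(\cdot-b_{k_{l}})\to f$ pointwise. Because $\{u(\cdot+b_{k_{l}})\}$ is equi-Lipschitz and uniformly bounded by $\|u\|_{\infty}$, an Arzel\`a--Ascoli/diagonal extraction (still denoted $(b_{k_{l}})$) gives $u(\cdot+b_{k_{l}})\to u^{\ast}$ locally uniformly, with $\Delta u^{\ast}=f^{\ast}$ in distributions. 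Applying the same compactness to $\{u^{\ast}(\cdot-b_{k_{l}})\}$ and extracting once more yields $u^{\ast}(\cdot-b_{k_{l}})\to\tilde u$ locally uniformly with $\Delta\tilde u=f$. Then $u-\tilde u$ is bounded and distributionally harmonic on $\mathbb{R}^{n}$; by Weyl's lemma and Liouville's theorem, $\tilde u=u+c_{0}$ for a constant $c_{0}\in\mathbb{R}$.

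The main obstacle is to prove $c_{0}=0$, and here I would use a hull argument. Introduce $\mathcal H(u):=\overline{\{u(\cdot+\tau):\tau\in\mathbb{R}^{n}\}}$ in the locally uniform topology; by the global Lipschitz bound and Arzel\`a--Ascoli, $\mathcal H(u)$ is compact, every $v\in\mathcal H(u)$ satisfies $\|v\|_{\infty}\le\|u\|_{\infty}$, and $\mathcal H(u)$ is translation invariant and closed under locally uniform limits. Unwinding the construction, $\tilde u({\bf t})=\lim_{l}\lim_{m}u({\bf t}-b_{k_{l}}+b_{k_{m}})$; a standard diagonal extraction produces a single sequence $\tau_{l}:=b_{k_{m_{l}}}-b_{k_{l}}$ with $u(\cdot+\tau_{l})\to\tilde u$ locally uniformly, so $u+c_{0}=\tilde u\in\mathcal H(u)$. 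Translation invariance and closure of $\mathcal H(u)$ then give $\mathcal H(u+c_{0})=\mathcal H(u)+c_{0}\subseteq\mathcal H(u)$, and iterating yields $u+kc_{0}\in\mathcal H(u)$ for every positive integer $k$; the uniform bound $\|u+kc_{0}\|_{\infty}\le\|u\|_{\infty}$ forces $c_{0}=0$. Hence $\tilde u=u$, both almost automorphic limits hold for $u$ along $(b_{k_{l}})$, and combined with step (i) the characterization theorem delivers that $u$ is compactly $\mathrm{R}$-multi-almost automorphic.
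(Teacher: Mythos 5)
Your proposal is correct and follows essentially the same route as the paper: interior elliptic estimates give (global) uniform continuity of $u$ and hence equicontinuity of its translates, Arzel\`a--Ascoli produces the locally uniform limits $u^{\ast}$ and $\tilde u$ solving $\Delta u^{\ast}=f^{\ast}$ and $\Delta\tilde u=f$ in distributions, and Weyl--Liouville reduces everything to showing that the bounded harmonic function $u-\tilde u$, a constant, vanishes. The only divergence is in that last step: the paper uses the monotonicity $\inf u\le\inf u^{\ast}\le\inf\tilde u$ and $\sup u\ge\sup u^{\ast}\ge\sup\tilde u$ under passage to limits of translates, while your hull-iteration argument ($u+kc_{0}\in\mathcal{H}(u)$ for every $k$) is a slightly heavier but equally valid way of exploiting the same bound $\|v\|_{\infty}\le\|u\|_{\infty}$ for all limits of translates.
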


First, let us define what we mean by solution in the distributional sense. Let $C^{\infty}_{c}(\mathbb{R}^n)$ denote the space of test functions in $ \mathbb{R}^n $ i.e., functions of class $C^{\infty}$ with compact support in $ \mathbb{R}^n $.
\begin{defn}
A function $u:\mathbb{R}^n \to \mathbb{R}$ is a solution of equation (\ref{PoissonEq}) in the sense of distributional sense, if it satisfies
\begin{equation}
\int_{\mathbb{R}^n}u(x)\Delta \phi(x)dx = \int_{\mathbb{R}^n}f(x)\phi(x)dx\, \, , \,\,\, \forall \phi \in C^{\infty}_{c}(\mathbb{R}^n)\, . \label{distributuions}
\end{equation}
\end{defn}
For the proof, we follows Sibuya's paper \cite{YSibuya}. 

%\textcolor{red}{Note that $u$ is Lipschitz, i.e there exist a positive constant $K$ such that
%$|u(x)-u(y)|\leq K|x-y|$\, .
%}

\begin{proof}
({\bf Proof of Theorem (\ref{TeoPoisson})})
Let $\{ \zeta_n\}$ be a sequence in $\mathrm{R}$. Since $f$ is $\mathrm{R}$-multi almost automorphic, there exist a subsequence $ \{ \omega_n\}\subset \{ \zeta_n\}$ and a measurable bounded function $g:\mathbb{R}^n \to \mathbb{R}$ such that the following limits hold:
\begin{equation}
\lim_{n \to + \infty}f(x+\omega_n)=g(x)\quad \text{ and }
\lim_{n \to + \infty}g(x-\omega_n)=f(x)\, 
\end{equation}
pointwise in $x\in \mathbb{R}^{n} $. Moreover, since $f$ is bounded, it follows in view of the formula \eqref{distributuions}, the sequence $ (u_n (\cdot):=u(\cdot+\omega_n))_n $ converges pointwise in $x\in \mathbb{R}^{n} $.  Furthermore, we point out that the uniform continuity of $u$ yields (see \cite{Cor-Hil} on $\mathbb{R}^n$ that the family $(u_n (\cdot))_n$ is equicontinuous. Therefore, we obtain the following uniform limit on compact subsets of $\mathbb{R}^n$, i.e., 
\begin{equation}
\lim_{n \to + \infty}u(x+\omega_n):=v(x) . \label{limit1 Sib}
\end{equation}
Note that the function $v$ is also uniformly continuous and bounded. Now, we prove that 
\begin{equation}
\lim_{n \to + \infty}v(x-\omega_n)=u(x) ,  \label{limit2 Sib}
\end{equation}
holds uniformly on compact subsets of $\mathbb{R}^n$.
%\lim_{n \to + \infty}v(x-\omega_n)=u(x)\, .
Notice first that $v$ is solution of the equation
$$\Delta v=g\, ,$$
in the sense of distributions. Arguing as above and since $v$ is uniformly continuous, there exist a subsequence $ \{ \xi_n\} \subset \{ \omega_n\}$ and a bounded uniformly continuous functions $w$ such that the following limit holds uniformly on compact subsets of $\mathbb{R}^n$
\begin{equation}
\lim_{n \to + \infty}v(x-\omega_n)=w(x) . \label{limit3 Sib}
\end{equation}
Moreover $w$ satisfies the equation 
$$\Delta w=f,$$
in the sense of distributions. Therefore, the function $u-w$ is a bounded and harmonic. Hence, there exists a constant $C$ such that
$$u=w+C.$$
In order to prove that $C=0$, it suffices to see that 
$$\inf u \leq \inf v \leq \inf w$$
and
$$\sup u \geq \sup v \geq \sup w\, .$$
This is consequence of  \eqref{limit1 Sib}, \eqref{limit2 Sib} and \eqref{limit3 Sib} respectively, which proves the result.
%We have that $w=u$ and therefore $u$ is $\mathrm{R}$-compact multi almost automorphic.

\end{proof}
%\begin{rem}
%If the forcing term $f$ in Poisson's equation (\ref{PoissonEq}) is $\mathrm{R}$-multi almost periodic, then following Sibuya's argument in \cite{YSibuya} it is possible to prove that the solution $u$ is also $\mathrm{R}$-multi almost periodic.
%\end{rem}

%%%%%
%%%%% HEAT CONDUCTION WITH NONLOCAL DIFFUSION
%%%%%

\subsection{heat equation: : $\mathrm{R}$-almost automorphic solutions.}

In this subsection we study the initial value problem for the homogeneous heat equation with nonlocal diffusion
\begin{eqnarray}\label{HEap}
u_t - \Delta  u&=& 0 \,\, \,\, {\rm in} \,\,\,\, \mathbb{R}^n \times (0,+\infty), \\%\, \, \, \alpha \in (0,2)\, ,\\
u(x,0^+)&=&g(x)\,\, \,\, {\rm in} \,\,\,\, \mathbb{R}^n \times \{0\},\nonumber
\end{eqnarray}

\begin{prop}
Let the initial profile $g$ of equation (\ref{HEap}) be a $\mathrm{R}$-multi almost automorphic function. Then its solution $u$ is $\mathrm{R}$-multi-almost automorphic in space variable.
\end{prop}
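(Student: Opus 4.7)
The plan is to represent the solution of \eqref{HEap} explicitly via convolution with the heat kernel and then invoke the convolution invariance result proved earlier in Proposition \ref{convdiaggas}. More precisely, for each fixed $t>0$, the (classical) bounded solution is given by
\[
u(x,t)=\bigl(\Phi_{t}\ast g\bigr)(x)=\int_{\mathbb{R}^{n}}\Phi_{t}(x-y)\,g(y)\,dy,\qquad x\in\mathbb{R}^{n},
\]
where $\Phi_{t}(z):=(4\pi t)^{-n/2}\exp(-|z|^{2}/(4t))$ is the Gauss--Weierstrass kernel. Since $\Phi_{t}\in L^{1}(\mathbb{R}^{n})$ for every fixed $t>0$ (with unit mass), the integral on the right-hand side fits exactly into the framework of Proposition \ref{convdiaggas}.

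The first step is to check that $g$ is bounded: since $g$ is $\mathrm{R}$-multi-almost automorphic and we are working in the setting where a distinguished sequence in $\mathrm{R}$ provides a limit function (and we are implicitly taking $g\in C_{b}(\mathbb{R}^{n})$ in order to speak of the bounded solution of the heat equation), boundedness of $g$ is either an explicit hypothesis or follows from the supremum-type formula of Proposition \ref{superste}. Next, I would fix $t>0$, set $h(\sigma):=\Phi_{t}(\sigma)$, and regard $g$ as a function $g:\mathbb{R}^{n}\to Y$ (so the role of $X$ is trivial, with $\mathcal{B}$ any collection of subsets). Then Proposition \ref{convdiaggas} directly yields that
\[
x\mapsto (h\ast g)(x)=u(x,t)
\]
is bounded and $\mathrm{R}$-multi-almost automorphic, which is exactly the claim: $u(\cdot,t)$ is $\mathrm{R}$-multi-almost automorphic as a function of the space variable, for each fixed $t>0$.

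The main conceptual point (and the only place where one must be a bit careful) is the interpretation of the statement. The spatial variable $x$ ranges in $\mathbb{R}^{n}$ and the translations used in the definition of $\mathrm{R}$-multi-almost automorphy are applied only in $x$, with $t$ frozen; this is exactly why the convolution representation $u(\cdot,t)=\Phi_{t}\ast g$ is the right object to analyze, since the heat kernel acts only by spatial translation. There is no serious obstacle beyond verifying the hypotheses of Proposition \ref{convdiaggas}, because $\Phi_{t}\in L^{1}(\mathbb{R}^{n})$ and $g$ is bounded and $\mathrm{R}$-multi-almost automorphic; the limit function of $u(\cdot,t)$ under the selected subsequence is then $\Phi_{t}\ast g^{\ast}$, where $g^{\ast}$ is the limit function associated with $g$. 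Finally, uniqueness of the bounded solution of the heat equation with given bounded continuous initial data identifies our $u(\cdot,t)$ with $\Phi_{t}\ast g$, completing the argument.
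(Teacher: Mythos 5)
Your proposal is correct and follows essentially the same route as the paper: the paper likewise writes the solution as the spatial convolution $u(\cdot,t)=\Phi(\cdot,t)\ast g$ with the unit-mass heat kernel and then concludes by the argument of Proposition \ref{convdiaggas}, which for each fixed $t>0$ is exactly the direct application you describe. The only caveat worth noting is that boundedness of $g$ is an implicit standing assumption here (it does not follow from Proposition \ref{superste} alone for a general collection $\mathrm{R}$), but the paper makes the same tacit assumption, so this is not a divergence.
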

\begin{proof}
From \cite{evans}, we have that the representation formula of the solution for initial value problem (\ref{HEap}) is given by
\begin{equation}
u(x,t)=\int_{\mathbb{R}^n}\Phi(x-y,t)g(y)dy\, ,
\end{equation}
where $\Phi(x,t)$ is the fundamental solution which satisfies 
\begin{equation}\label{HE01}
\int_{\mathbb{R}^n}\Phi(\xi,t)d\xi=1\, .
\end{equation}
The rest of the proof is similar to that of Proposition \ref{convdiaggas}.
\end{proof}
Note that, if $f$ is $\mathrm{R}$-multi-almost periodic (compactly ${\mathrm R}$-multi-almost automorphic function), then its solution is also $\mathrm{R}$-multi-almost periodic (compactly ${\mathrm R}$-multi-almost automorphic function respectively).

%%%%%
%%%%% HEAT CONDUCTION IN MATERIALS WITH MEMORY
%%%%%
\subsection{Heat conduction in materials with memory: $\mathrm{R}$-multi-almost automorphic solutions.}
%This subsection is an adaptation of the results on asymptotically almost automorphic solutions to integro-differential equations in [CITE] to the case of $\mathrm{R}$-asymptotically almost automorphic ones. We give the details for the reader convenience.

In this subsection we suppose that $\mathrm{R}\subset \mathbb{R}$ is a subset such that conditions [D1] and [D2] holds, also we denote by $\mathrm{R}AAA(\mathbb{R}^+; X)$ and by $\mathrm{R}AAA(\mathbb{R}^+\times X; X)$ the Banach space of asymptotically $\mathrm{R}$-almost automorphic functions and asymptotically $(\mathrm{R},\mathcal{B})$-almost automorphic functions respectively (see Theorem \ref{2.1.10obazac}).

The following integrodifferential equation (with nonlocal initial data) models the heat conduction in materials with memory
\begin{eqnarray}
u'(t)&=& Au(t)+ \int_0^t B(t-s)u(s)ds+f(t,u(t)),\ t\geq0\; , \label{eqAX01}\\
u(0)&= & u_0 + g(u)\, ; \label{eqAXX01}
\end{eqnarray}
where $u_0 \in X$, $A$ and $\{B(t)\}_{ t \geq  0}$ are linear, closed and densely defined operators on the Banach space $X$.%; and $A,B(t),f,g$ satisfies appropriate conditions.
%; $D(A)\subset D(B(t,s))$ for all $t\geq s \geq 0$.

Results about existence and uniqueness of the asymptotically almost automorphic solution to the integrodifferential equation (\ref{eqAX01})-(\ref{eqAXX01}) have been studied in \cite{chavez3,HSDing}. This subsection is an adaptation of the results on asymptotically almost automorphic solutions to integrodifferential equations in \cite{chavez3} to the case of asymptotically $\mathrm{R}$-almost automorphic ones, and we give details for the reader convenience. Let us start with the following definition

%his Here, we propose two different theorems to the ones presented in \cite{16}. First let us consider some basic definitions and assumptions: % in order to ensure the existence and uniqueness of the solution in $AAA(\R^+;\X)$.
%in which $$F(t,u(t),\mathcal{B}u(t)):= \mathcal{B}u(t) + g(t,u(t))\, ,$$ 
%and 
%$$\mathcal{B}u(t):=\int_0^t{B(t,s)u(s)ds}\, .$$
%
%\begin{equation}\label{eqX01}
%u'(t)= Au(t)+\int_0^t{B(t,s)u(s)ds + g(t,u(t))},\ t\geq0\; ,
%\end{equation}
%\begin{equation}\label{eqXX01}
%u(0)=x_0 + h(u),
%\end{equation}
%A natural way to study integro-differential equations is with the help of resolvent operators.

\begin{defn}\label{d14}\ {\rm A family \{$R(t)\}_{t\geq 0}$ of continuous linear operators on $X$ is called a resolvent
operator for the equation \ (\ref{eqAX01}) \ if, and only if:}
\begin{enumerate}
\item [{\rm (R1)}] {\rm $R(0)=I$, is the identity operator on $X$.}
\item [{\rm (R2)}] {\rm for all $x\in X$, the operator $t \rightarrow R(t)x$ is a continuous function on
$[0,+\infty)$.}
\item [{\rm (R3)}] {\rm For all $t\geq0$ the operator $R(t)$ is continuous on $Y$, and for all $y\in Y$, the application $t\rightarrow R(t)y$ belongs to $C([0,+\infty);Y)\cap C^1([0,+\infty);X)$  and satisfies}
$$\frac{d}{dt}R(t)y=AR(t)y+\int_0^tB(t-s)R(s)yds=R(t)Ay+\int_0^tR(t-s)B(s)yds ,\ t\geq0, $$
\end{enumerate}

\noindent {\rm where $Y=D(A)=B(t)$ for all $t\geq 0$ and is equipped with the graph norm. 
%: $|||y|||:=||Ay||+||y||$ where  $||\cdot||$ is the norm in $\X$. 
Details on resolvent operators and same conditions on their existence can be found in \cite{RCGrim,JPSS}.}
\end{defn}

\noindent We assume that the resolvent operator $\{R(t)\}_{t\geq0}$ of equation (\ref{eqAX01}) exists; under this situation, we can define a mild solution of equations (\ref{eqAX01})-(\ref{eqAXX01}) as follows:
\begin{defn}\label{d15}\ {\rm A function $u\in C(\mathbb{R}^+;X)$ is a mild solution of the nonlocal integrodifferential equation (\ref{eqAX01})-(\ref{eqAXX01}) if}
$$u(t)=R(t)\left( u_0+g(u) \right)+\int_0^tR(t-s)f(s,u(s))ds,\ t\geq 0 \, .$$
\end{defn}

\begin{defn}{\bf (Uniform exponential stability).} {\rm 
The resolvent operator  $\{R(t)\}_{t\geq0}$ has uniform exponential stability, if there exist positive constants
$M\geq 1, \, \delta>0$ such that for all $t\geq 0$ we have $\|R(t)\|\leq Me^{-\delta t}$.}
\end{defn}
\noindent We say that the resolvent operator $\{R(t)\}_{t\geq0}$  has property {\bf (R)}, if it has uniform exponential stability.
% The following technical condition will be needed:\\
%
%\noindent  $(R(t))_{t\geq0}$ has uniform exponential stability.\\
%%%%%%%%%%%%%%% BASIC RFESULTS
%$$\cdots HASTA \, \, \, \, ACA \, \, \, \, AL \, \, \, \, MENOS $$

The proof of the following two local theorems are adaptations of the ones contained in \cite{chavez3}.

\begin{thm}\label{th31} Let $f \in \mathrm{R} AAA(\mathbb{R}^+\times X; X)$, $g: \mathrm{R} AAA(\mathbb{R}^+; X)\to X$, the resolvent operator has property {\bf (R)},
%$\{R(t)\}_{t\geq 0}$ the resolvent operator which satisfies condition {\bf (A)},
 $\rho >0$, and the set $\Delta_0=\{y \in \mathrm{R} AAA(\mathbb{R}^+,X):||y-y_0||_{\infty}\leq \rho\},$ where 
$$ y_0(t)=R(t)(u_0+g(0))+\int_{0}^{t}R(t-s)f(s,0)ds.$$
Suppose also that $||y_0||_{\infty}\leq \rho$, there exist positive constants $L_f , L_g$ such that:
$$||f(t,x(t))-f(t,y(t))||\leq L_f||x(t)-y(t)||,\ x,y \in \Delta_0, t \geq 0 ,$$
and $g$ is $L_g$-Lipschitz in $\Delta_0$. If the constants $L_g, L_f,\rho, M,\delta$ satisfy the inequality:
\begin{equation}\label{eq33}
\delta L_g+L_f < \dfrac{\rho \delta}{M(\rho+||y_0||_{\infty})}\, ;
\end{equation}
%\end{enumerate}
then, equation (\ref{eqAX01})-(\ref{eqAXX01}) has a unique mild solution in $\Delta_0$.
\end{thm}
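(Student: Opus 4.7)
The plan is to apply the Banach contraction principle to the integral operator
$$\Gamma u(t) := R(t)\bigl(u_0 + g(u)\bigr) + \int_{0}^{t} R(t-s)\,f(s,u(s))\,ds, \quad t\geq 0,$$
on the closed ball $\Delta_0 \subset \mathrm{R}AAA(\mathbb{R}^+;X)$ (which is a complete metric space as a closed subset of a Banach space). The proof has three parts: (i) $\Gamma$ maps $\mathrm{R}AAA(\mathbb{R}^+;X)$ into itself; (ii) $\Gamma(\Delta_0) \subseteq \Delta_0$; (iii) $\Gamma$ is a strict contraction on $\Delta_0$.

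For (i), the free term $t\mapsto R(t)(u_0+g(u))$ satisfies $\|R(t)(u_0+g(u))\|\le Me^{-\delta t}\|u_0+g(u)\|$ by property $(\mathbf{R})$, so it lies in $C_0(\mathbb{R}^+;X)$ and is therefore trivially asymptotically $\mathrm{R}$-almost automorphic (with vanishing almost automorphic component). For the integral term, I would first apply the asymptotic composition theorem (the asymptotic analogue of Theorem \ref{eovakoonako}) together with the Lipschitz hypothesis on $f$ to conclude that $s\mapsto f(s,u(s))$ belongs to $\mathrm{R}AAA(\mathbb{R}^+;X)$ whenever $u$ does. Then I would apply the one-dimensional half-line version of Theorem \ref{Theorem existence linear VIO} with $\mathbb{D}=[0,\infty)$ and operator-valued kernel $K(t):=R(t)$: conditions $(E1)$--$(E3)$ follow at once from the exponential decay $\|R(t)\|\le Me^{-\delta t}$.

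For (ii), given $u\in\Delta_0$ we have $\|u\|_\infty\le\rho+\|y_0\|_\infty$, and using the Lipschitz constants together with $(\mathbf{R})$,
\begin{align*}
\|\Gamma u(t)-y_0(t)\| &\le Me^{-\delta t}L_g\|u\|_\infty + ML_f\|u\|_\infty\int_0^t e^{-\delta(t-s)}\,ds \\
&\le M(\rho+\|y_0\|_\infty)\Bigl(L_g+\tfrac{L_f}{\delta}\Bigr) = \frac{M(\rho+\|y_0\|_\infty)(\delta L_g+L_f)}{\delta},
\end{align*}
which by hypothesis \eqref{eq33} is strictly less than $\rho$, uniformly in $t\ge 0$. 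For (iii), the same estimate applied to the difference gives, for $u,v\in\Delta_0$,
$$\|\Gamma u-\Gamma v\|_\infty \le \frac{M(\delta L_g+L_f)}{\delta}\,\|u-v\|_\infty,$$
and since $\rho/(\rho+\|y_0\|_\infty)\le 1$, condition \eqref{eq33} implies $M(\delta L_g+L_f)/\delta<1$. The Banach fixed-point theorem then yields a unique mild solution in $\Delta_0$.

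The main obstacle is step (i), specifically verifying that the convolution $\int_0^t R(t-s)f(s,u(s))\,ds$ preserves the $\mathrm{R}AAA$ structure; while conceptually this is a direct descendant of Theorem \ref{Theorem existence linear VIO}, one must be careful that the operator-valued kernel $R(\cdot)$, rather than a scalar kernel, still allows the splitting $f=f_a+f_0$ to propagate through the integral (the almost automorphic part $f_a$ giving an $\mathrm{R}$-almost automorphic convolution via the dominated convergence argument used in Theorem \ref{TI01}, and the $C_0$-part $f_0$ giving a $C_0$-contribution thanks to the exponential decay of $\|R(\cdot)\|$). The remaining estimates for (ii) and (iii) are routine Gronwall-type computations.
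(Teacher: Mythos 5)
Your proposal is correct and follows exactly the argument the paper intends: the paper gives no written proof for Theorem \ref{th31} (it defers to an adaptation of the corresponding results in the cited reference), but the intended scheme is precisely your contraction-mapping argument on $\Delta_0$ — invariance of $\mathrm{R}AAA(\mathbb{R}^+;X)$ via the composition and convolution theorems, the self-mapping estimate $\|\Gamma u-y_0\|_\infty\le M(\rho+\|y_0\|_\infty)(\delta L_g+L_f)/\delta<\rho$, and the contraction constant $M(\delta L_g+L_f)/\delta<\rho/(\rho+\|y_0\|_\infty)\le 1$, all of which you carry out correctly (mirroring the proofs of Theorems \ref{Exist comp multi-AA nonlinear} and \ref{Asym Nonli integral Volt} in the paper). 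Your flagged caveat about the operator-valued kernel $R(\cdot)$ is apt but harmless, since the exponential bound $\|R(t)\|\le Me^{-\delta t}$ lets the scalar-kernel arguments of Theorems \ref{TI01} and \ref{Theorem existence linear VIO} go through verbatim.
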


Note that, under hypothesis of theorem \ref{th31}, we have the well defined operator $\Gamma: \mathrm{R} AAA(\mathbb{R}^+; X) \to \mathrm{R}AAA (\mathbb{R}^+; X)$, where
\begin{equation}\label{finop}
\Gamma u(t)=R(t)\left( u_0+g(u) \right)+\int_0^tR(t-s)f(s,u(s))ds.
\end{equation}

%From Theorem \ref{teos2} we can obtain for the equation (\ref{eqAX01})-(\ref{eqAXX01}) the following result
%\noindent Our last abstract result is the following:
\begin{thm}\label{th313}
Let $f \in \mathrm{R} AAA(\mathbb{R}^+\times X; X)$, $g: \mathrm{R} AAA(\mathbb{R}^+; X)\to X$, the resolvent operator satisfies property {\bf (R)}, $\rho>0$ and the set 
$\Delta_0=\{y\in \mathrm{R} AAA(\mathbb{R}^+; X): ||y-y_0||_{\infty}\leq\rho\},$ where 
 $$y_0(t)=R(t)(u_0+g(0))+\int_0^t R(t-s)f(s,0)ds\, .$$
Suppose, further that there exist positive constants $L_f, L_g$ such that:
$$\|f(t,x(t))-f(t,y(t))\|\leq L_f \|x(t)-y(t) \|,\ x,y \in \Delta_0, \ t\geq 0,$$
and $g$ is $L_g$-Lipschitz in $\Delta_0$. If $\Gamma$ is the operator defined in (\ref{finop}) and the inequality 
$$0<\theta=\left( 1-ML_g-\dfrac{M}{\delta}L_f \right) ^{-1}||\Gamma y_0-y_0||_{\infty}\leq\rho\, $$
%and $\dfrac{L_g M}{\delta}<1.$ 
holds. Then equation (\ref{eqAX01})-(\ref{eqAXX01}) has a unique mild solution  $y\in \Delta_0$. %such that $||y-y_0||_{\infty}\leq \rho.$\\
\end{thm}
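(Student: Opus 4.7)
The plan is to apply the Banach contraction principle to the operator $\Gamma$ on the closed ball $\Delta_0$, regarded as a complete metric space inside the Banach space $\mathrm{R}AAA(\mathbb{R}^+;X)$. By the argumentation already used in Theorem \ref{th31} (combined with Proposition \ref{2.1.10obazac} and the composition results applied to the asymptotically $(\mathrm{R},\mathcal{B})$-multi-almost automorphic function $f$), the operator $\Gamma$ sends $\mathrm{R}AAA(\mathbb{R}^+;X)$ into itself, so the only remaining tasks are to check invariance of $\Delta_0$ and the contraction estimate.

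First I would establish the contraction estimate on all of $\Delta_0$. For $u,v\in \Delta_0$, using property \textbf{(R)} together with the Lipschitz hypotheses on $f(t,\cdot)$ and $g$, I would write, for any $t\geq 0$,
\begin{align*}
\|\Gamma u(t)-\Gamma v(t)\| &\leq \|R(t)\|\, L_g\,\|u-v\|_\infty+\int_0^t \|R(t-s)\|\,L_f\,\|u(s)-v(s)\|\,ds \\
&\leq ML_g\,\|u-v\|_\infty + ML_f\,\|u-v\|_\infty\int_0^t e^{-\delta(t-s)}\,ds \\
&\leq \Bigl( ML_g+\tfrac{M}{\delta}L_f \Bigr)\|u-v\|_\infty.
\end{align*}
Note that for $\theta$ in the statement to be a well-defined positive number, the quantity $\kappa:=ML_g+\tfrac{M}{\delta}L_f$ must satisfy $\kappa<1$; hence $\Gamma$ is a strict contraction with constant $\kappa$ on $\Delta_0$ (and in fact on the whole space $\mathrm{R}AAA(\mathbb{R}^+;X)$).

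Next I would verify that $\Gamma(\Delta_0)\subseteq \Delta_0$. For $u\in \Delta_0$, by the triangle inequality and the contraction estimate applied to the pair $(u,y_0)$,
\begin{equation*}
\|\Gamma u-y_0\|_\infty \leq \|\Gamma u-\Gamma y_0\|_\infty + \|\Gamma y_0-y_0\|_\infty \leq \kappa\,\|u-y_0\|_\infty+\|\Gamma y_0-y_0\|_\infty \leq \kappa\rho+\|\Gamma y_0-y_0\|_\infty.
\end{equation*}
The hypothesis $\theta=(1-\kappa)^{-1}\|\Gamma y_0-y_0\|_\infty \leq \rho$ is equivalent to $\|\Gamma y_0-y_0\|_\infty\leq (1-\kappa)\rho$, so the right-hand side is bounded by $\kappa\rho+(1-\kappa)\rho=\rho$, giving $\Gamma u\in \Delta_0$.

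Since $\Delta_0$ is closed in $\mathrm{R}AAA(\mathbb{R}^+;X)$ (a Banach space by Proposition \ref{2.1.10obazac}) and therefore a complete metric space with the induced sup-norm, the Banach contraction principle yields a unique fixed point $y\in \Delta_0$ of $\Gamma$, which by Definition \ref{d15} is the desired unique mild solution of \eqref{eqAX01}--\eqref{eqAXX01} in $\Delta_0$. The main subtlety is simply the invariance check: one must recognise that the specific form of $\theta$ is designed precisely so that $\|\Gamma y_0-y_0\|_\infty+\kappa\rho\leq \rho$, which is what makes the Banach ball around $y_0$ (rather than around $0$) the right domain.
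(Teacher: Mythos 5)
Your argument is correct and is exactly the standard local contraction-mapping proof that the paper itself omits (it only remarks that Theorems \ref{th31} and \ref{th313} are "adaptations" of the proofs in the cited reference): the contraction constant $\kappa=ML_{g}+\tfrac{M}{\delta}L_{f}<1$ forced by $\theta>0$, the invariance $\Gamma(\Delta_{0})\subseteq\Delta_{0}$ via $\|\Gamma u-y_{0}\|_{\infty}\leq\kappa\rho+(1-\kappa)\rho=\rho$, and completeness of the closed ball all check out. The only (inconsequential) overreach is your parenthetical claim that $\Gamma$ is a contraction on all of $\mathrm{R}AAA(\mathbb{R}^{+};X)$ --- the Lipschitz hypotheses on $f$ and $g$ are assumed only on $\Delta_{0}$, so the estimate is justified only there, which is all the argument needs.
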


After introducing this two local abstract theorems for the integrodifferential equation (\ref{eqAX01})-(\ref{eqAXX01}), let us apply the results to the heat conduction in materials with memory.

%In this section we give prove that under plausible assumptions, the heat equation in materials with memory has a %unique $\mathrm{R}$-multi asymptotically almost automorphic solution. In order to do that, let us introduce the %equation of interest.

%of the results obtained in the previous section to the  heat conduction in
%materials with memory. \\
%rougly a material with memory is one in which the energy en materiales para los cuales la energía recibida no se
%desvanece instantáneamente (sino que permanece como efecto de recuerdo y tiende a cero al transcurrir el tiempo). La
%descripción de la ecuación que rige éste fenómeno es como sigue:

Let $\Omega \subset \mathbb{R}^3$ be any open bounded set with smooth boundary $\partial\Omega$. The heat conduction in materials with memory is described with the following partial integrodifferential equation:

\begin{eqnarray}\label{EQ1}
\frac{\partial^2 \theta}{\partial^2 t}(x,t)+\beta(0)\frac{\partial \theta}{\partial
t}(x,t)&=&\alpha(0)\Delta\theta (x,t)-\int_{-\infty}^{t}\beta'(t-s)\frac{\partial \theta}{\partial
t}(x,s)ds\\
&+&\int_{-\infty}^{t}\alpha'(t-s)\Delta\theta(x,s)ds+a(t)b(\theta(t)),\nonumber
\end{eqnarray}
where $\alpha, \beta \in C^2([0,+\infty[;\mathbb{R})$, are the thermal relaxation function of the heat flux and the energy
relaxation function respectively, with $\alpha (0),\beta (0)$ positive and $\Delta$ is the Laplace operator in $\Omega$. For isotropic  materials, in which the temperature $\theta(x,t)$ is know for all  $t\leq 0$ and does not depend on the position  $x\in \Omega$, equation (\ref{EQ1}) can be rewritten as follows:

\begin{eqnarray}\label{EQ2}
\theta^{''}(t)+\beta(0)\theta^{'}(t)&=&\alpha(0)\Delta\theta (t)-\int_{0}^{t}\beta'(t-s)\theta^{'}(s)ds+\\
&+&\int_{0}^{t}\alpha'(t-s)\Delta\theta(s)ds+a(t)b(\theta(t)).\nonumber
\end{eqnarray}
Introducing the function $\eta(t)=\theta'(t)$, the previous equation is written as the system:
\begin{eqnarray*}
 \theta'(t) &=& \eta(t).\\
\eta'(t)&=&-\beta(0)\eta(t)+\alpha(0)\Delta\theta (t)-\int_{0}^{t}\beta'(t-s)\eta(s)ds+
\int_{0}^{t}\alpha'(t-s)\Delta\theta(s)ds+a(t)b(\theta(t))\, ,
\end{eqnarray*}
that is :

\begin{eqnarray*}
\begin{bmatrix}
 {\theta(t)}\\
{\eta(t)}
\end{bmatrix}^{'}
=\begin{bmatrix}
{0}&{I}\\
{\alpha (0)\Delta}&{-\beta(0)I}
\end{bmatrix}
\begin{bmatrix}
 {\theta(t)}\\
{\eta(t)}
\end{bmatrix}+
\int_{0}^{t}\begin{bmatrix}
{0}&{0}\\
{\alpha'(t-s)\Delta}&{-\beta'(t-s)I}
\end{bmatrix}\begin{bmatrix}
 {\theta(s)}\\
{\eta(s)}
\end{bmatrix}ds+\begin{bmatrix}
{0}\\
{a(t)b(\theta(t))}
\end{bmatrix}.
\end{eqnarray*}
Now, consider the ambient space $X=H_0^1(\Omega)\times L^2(\Omega)$ and identify the operators:
\begin{eqnarray*}
 A=\begin{bmatrix}
{0}&{I}\\
{\alpha (0)\Delta}&{-\beta(0)I}
\end{bmatrix} , \,  B(t-s)=\begin{bmatrix}
{0}&{0}\\
{\alpha'(t-s)\Delta}&{-\beta'(t-s)I}
\end{bmatrix}.
\end{eqnarray*}
Furthermore for $\begin{bmatrix}
{\theta}\\
{\eta}
\end{bmatrix}$ $\in X$ identify:

\begin{eqnarray*}
 u=\begin{bmatrix}
{\theta}\\
{\eta}
\end{bmatrix} , \, \, f(t,u)=\begin{bmatrix}
{0}\\
{a(t)b(\theta)}
\end{bmatrix}\, .
\end{eqnarray*}

Then, for each initial condition 
$u(0)=u_0=\begin{bmatrix}
  {\theta_0}\\{\eta_0}
 \end{bmatrix} \in X
$ 
%
%$=\begin{bmatrix}
% {\theta_0}\\{\eta_0}
%\end{bmatrix}$. 
%
we have the following integrodifferential equation with nonlocal initial condition:
\begin{eqnarray}\label{EQ3}
  u'(t)&=&Au(t)+\int_0^t B(t-s)u(s)ds+f(t,u(t)), \ t\geq 0,\\
 u(0)&=&u_0+h(u),\label{EQ33}
\end{eqnarray}
with $D(A)=(H^2(\Omega)\cap H_0^1(\Omega))\times H_0^1(\Omega)$.

It follows from \cite{GChen} (see also \cite{RCGrim}), that $A$ generates a semigroup $\{T(t)\}_{t\geq 0}$ with
$||T(t)||\leq Me^{-\gamma t}$ for all $t\geq 0$ and $M,\gamma$ positive constants . Let $B(t)=F(t)A$, where:

\begin{eqnarray*}
 F(t)=\begin{bmatrix}
{0}&{0}\\
{-\beta'(t)I+\beta(0)\frac{\alpha'(t)}{\alpha(0)}I}&{\frac{\alpha'(t)}{\alpha(0)}I} 
      \end{bmatrix}.
\end{eqnarray*}

It is well known that under the following conditions: \\
%Let us consider the conditions:\\
%\begin{enumerate}

\noindent $R_1)$. $\alpha'(t)e^{\gamma t},\alpha^{''}(t)e^{\gamma t},\beta'(t)e^{\gamma t},\beta^{''}(t)e^{\gamma t}$
 are bounded and uniformly continuous functions. \\
 
\noindent $R_2)$. Let $p,q >1$ such that $1/p+1/q=1$ and for all $ t\geq0$, $$\max\{||F_{21}(t)||,||F_{22}(t)||\}\leq \frac{\gamma e^{-\gamma t}}{pM}, \ \
\max\{||F_{21}'(t)||,||F_{22}'(t)||\}\leq \frac{\gamma^2 e^{-\gamma t}}{(pM)^2}\, .$$
%\end{enumerate}
R.C. Grimmer has shown that the equation (\ref{EQ3}) has a resolvent operator $\{R(t)\}_{t\geq 0}$ which satisfies condition {\bf (R)} ; that is,  $\forall t \geq0: \
||R(t)||\leq Me^{-\frac{\gamma t}{q}}$ (see \cite{RCGrim} for original references).

Now, let us consider the following conditions

\begin{enumerate}
\item[(C1)]  $y_0(t)=R(t)(u_0+h(0))+\int_0^tR(t-s)f(s,0)ds.$
\item[(C2)]  $h(u)=\begin{bmatrix}
  {h_1(\xi)}\\{h_2(\eta)}
 \end{bmatrix} $, where $h_1: C(\mathbb{R}^+; H_0^1(\Omega))\to H_0^1(\Omega)$ and $h_2: C(\mathbb{R}^+; L^2(\Omega))\to L^2(\Omega)$ are Lipschitz with the same Lipschitz's constant $\dfrac{\rho}{pM(\rho+||y_0||_{\infty})}$.
 \item[(C3)]   $b: H_0^1(\Omega)\to L^2(\Omega)$ is also Lipschitz, with Lipschitz's constant $$\dfrac{\gamma \rho}{qM||a||(\rho +||y_0||_{\infty})}.$$
\end{enumerate} 

Finally, we have the following theorem as an application to the heat conduction in materials with memory
% an application of theorem \ref{th31}:
\begin{thm} Let $a\in \mathrm{R} AAA(\mathbb{R}^+;\mathbb{R})$ % Let $C=\sup_{s\in\mathbb{R}}||g(s,0)||$ 
and consider a real number $\rho >0$ such that:
\begin{equation}\label{EQ4}
 \rho \geq M \left( ||u_0||+ ||h(0)||+\dfrac{q}{\gamma}||a||\;||b(0)||\right).
\end{equation}
Suppose that the conditions $(C1)-(C3)$ holds. Then, equation (\ref{EQ3})-(\ref{EQ33}) has a unique mild solution 
$y\in \mathrm{R}AAA(\mathbb{R}^{+}; X)$ such that $||y-y_0||_{\infty}\leq \rho$.
\end{thm}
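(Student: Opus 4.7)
The plan is to recast equation \eqref{EQ3}--\eqref{EQ33} in exactly the abstract form of \eqref{eqAX01}--\eqref{eqAXX01} and then invoke the local existence result of Theorem \ref{th31}. All the algebraic setup has already been done just before the statement: on the product space $X=H_0^1(\Omega)\times L^2(\Omega)$ the operators $A$, $B(t)=F(t)A$, the nonlinearity $f(t,u)=(0,a(t)b(\theta))^{T}$, and the nonlocal operator $g=h=(h_1,h_2)$ have been identified, and Grimmer's result (under $(R_1)$--$(R_2)$) supplies a resolvent $\{R(t)\}_{t\geq 0}$ satisfying property \textbf{(R)} with decay exponent $\delta=\gamma/q$.

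First I would check the regularity hypotheses of Theorem \ref{th31}. Since $a\in\mathrm{R}AAA(\mathbb{R}^{+};\mathbb{R})$ and, by $(C_3)$, $b$ is (globally) Lipschitz from $H_0^1(\Omega)$ into $L^2(\Omega)$, the composition theorem for asymptotically $(\mathrm{R},\mathcal{B})$-almost automorphic functions (Theorem \ref{eovakoonako}, in its asymptotic version) applied to the product $a(t)b(\theta)$ places $f$ in $\mathrm{R}AAA(\mathbb{R}^{+}\times X;X)$; and $(C_2)$ places $h$ in the required class. Next I would read off the Lipschitz constants: $(C_2)$ gives $L_g=\rho/[pM(\rho+\|y_0\|_\infty)]$ as the common Lipschitz constant of $h_1,h_2$, hence of $h$ on $X$; $(C_3)$ combined with the structure of $f$ gives
\[
\|f(t,u)-f(t,v)\|_X \leq \|a\|\,L_b\,\|\theta_1-\theta_2\|_{H_0^1}\leq L_f\|u-v\|_X,\qquad L_f=\frac{\gamma\rho}{qM(\rho+\|y_0\|_\infty)},
\]
so the Lipschitz constants from $(C_2)$--$(C_3)$ land on exactly the values needed to balance $p$ and $q$ against the resolvent's decay.

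Third, I would verify $\|y_0\|_\infty\leq\rho$ using uniform exponential stability $\|R(t)\|\leq Me^{-\gamma t/q}$ and the definition of $y_0$ in $(C_1)$:
\[
\|y_0(t)\|\leq M\bigl(\|u_0\|+\|h(0)\|\bigr)+\int_0^t Me^{-\gamma(t-s)/q}\|a\|\,\|b(0)\|\,ds\leq M\Bigl(\|u_0\|+\|h(0)\|+\tfrac{q}{\gamma}\|a\|\,\|b(0)\|\Bigr)\leq \rho,
\]
by \eqref{EQ4}. Then I would plug $L_g,L_f,M,\delta=\gamma/q$ into the contraction inequality \eqref{eq33}; the factors $1/p$ and $1/q$ arising from the two Lipschitz constants are precisely tuned against $\delta=\gamma/q$ so that \eqref{eq33} is realized, at which point Theorem \ref{th31} produces the desired unique mild solution $y\in\mathrm{R}AAA(\mathbb{R}^+;X)$ with $\|y-y_0\|_\infty\leq\rho$, thereby translating back to the original heat-conduction system \eqref{EQ1}.

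The two steps I expect to require the most care are: (i) verifying that $(t,u)\mapsto a(t)\,b(\theta)$ inherits asymptotic $(\mathrm{R},\mathcal{B})$-multi-almost automorphy from $a$ when $u$ ranges in the closed ball $\Delta_0$ (one needs $\mathcal{B}$ to be compatible with this ball and to invoke the asymptotic version of the composition theorem with the Lipschitz condition on $b$ absorbing the error terms outside compact sets of $H_0^1(\Omega)$); and (ii) the bookkeeping of the constants $(p,q,\gamma,M,\rho,\|y_0\|_\infty,\|a\|,\|b(0)\|)$ so that both \eqref{EQ4} and \eqref{eq33} simultaneously hold — everything else is routine once these two items are in place.
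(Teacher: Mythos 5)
Your overall strategy is exactly the one the paper intends: the theorem is stated with no explicit proof, and the setup of $(C1)$--$(C3)$ together with \eqref{EQ4} makes clear that it is meant to follow by plugging the heat-conduction data into Theorem \ref{th31} with $\delta=\gamma/q$. Your verification that $\|y_0\|_{\infty}\leq\rho$ via the resolvent decay and \eqref{EQ4} is correct, and invoking the composition theorem to place $f(t,u)=(0,a(t)b(\theta))^{T}$ in $\mathrm{R}AAA(\mathbb{R}^{+}\times X;X)$ is the right move.

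However, the one step you describe as mere bookkeeping --- ``the factors $1/p$ and $1/q$ \dots are precisely tuned against $\delta=\gamma/q$ so that \eqref{eq33} is realized'' --- is asserted rather than checked, and when you actually check it, it fails with the constants as given. From $(C2)$ you get $L_g=\frac{\rho}{pM(\rho+\|y_0\|_{\infty})}$, so $\delta L_g=\frac{1}{p}\cdot\frac{\rho\delta}{M(\rho+\|y_0\|_{\infty})}$; but from $(C3)$ you get $L_f=\|a\|L_b=\frac{\gamma\rho}{qM(\rho+\|y_0\|_{\infty})}=\frac{\rho\delta}{M(\rho+\|y_0\|_{\infty})}$, i.e.\ $L_f$ alone already equals the entire right-hand side of \eqref{eq33}. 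Hence
\begin{equation*}
\delta L_g+L_f=\Bigl(1+\tfrac{1}{p}\Bigr)\cdot\frac{\rho\delta}{M\bigl(\rho+\|y_0\|_{\infty}\bigr)}>\frac{\rho\delta}{M\bigl(\rho+\|y_0\|_{\infty}\bigr)},
\end{equation*}
so the strict inequality \eqref{eq33} cannot hold, and Theorem \ref{th31} does not apply as stated. The intended tuning is evidently $\delta L_g\leq\frac{1}{p}(\cdots)$ and $L_f\leq\frac{1}{q}(\cdots)$ with $\frac1p+\frac1q=1$, which would require the Lipschitz constant in $(C3)$ to carry an extra factor $\frac1q$ (i.e.\ $\frac{\gamma\rho}{q^{2}M\|a\|(\rho+\|y_0\|_{\infty})}$), and even then one only reaches equality, not the strict inequality demanded by \eqref{eq33}. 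This discrepancy originates in the paper's conditions, but your proof inherits it: as written, the contraction estimate you rely on does not close, so you must either correct the constant in $(C3)$ (and make the Lipschitz bounds strict) or switch to Theorem \ref{th313}, whose hypothesis $ML_g+\frac{M}{\delta}L_f<1$ fails for the same reason unless $\|y_0\|_{\infty}>\rho/p$.
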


\end{document}